\newtheorem{theorem}{Theorem}[section]
\newtheorem{lemma}[theorem]{Lemma}
\newtheorem{proposition}[theorem]{Proposition}
\newtheorem{corollary}[theorem]{Corollary}
\theoremstyle{definition}
\newtheorem{definition}[theorem]{Definition}
\newtheorem{example}[theorem]{Example}
\newtheorem{remark}[theorem]{Remark}
\newtheorem{notation}[theorem]{Notation}
\newcommand{\func}[1]{\operatorname{#1}}
\newcommand{\Op}{\mathcal{O}}
\newcommand{\down}{{\downarrow}}
\newcommand{\up}{{\uparrow}}
\newcommand{\cl}{{\sf cl}}
\newcommand{\Int}{{\sf int}}
\newcommand{\pt}{{\sf pt}}
\newcommand{\RC}{{\sf RC}}
\newcommand{\U}{\mathcal{U}}
\newcommand{\X}{\mathcal{X}}
\newcommand{\F}{\mathcal{F}}
\begin{document}

\title{When is the frame of nuclei spatial: A new approach}
\author{F. \'{A}vila, G.~Bezhanishvili, P.~J.~Morandi, A.~Zald{\'i}var}
\date{}

\dedicatory{Dedicated to the memory of Harold Simmons}
\subjclass[2010]{06D22, 06E15, 54F05}
\keywords{Frame, nucleus, spatial frame, booleanization, Priestley space, Esakia space, scattered space, weakly scattered space}

\begin{abstract}
For a frame $L$, let $X_L$ be the Esakia space of $L$. We identify a special subset $Y_L$ of $X_L$ consisting of nuclear points of
$X_L$, and prove the following results:
\begin{itemize}
\item $L$ is spatial iff $Y_L$ is dense in $X_L$.
\item If $L$ is spatial, then $N(L)$ is spatial iff $Y_L$ is weakly scattered.
\item If $L$ is spatial, then $N(L)$ is boolean iff $Y_L$ is scattered.
\end{itemize}
As a consequence, we derive the well-known results of Beazer and Macnab \cite{BM79}, Simmons \cite{Sim80}, Niefield and Rosenthal
\cite{NR87}, and Isbell \cite{Isb91}.
\end{abstract}

\maketitle

\section{Introduction}

Nuclei play an important role in pointfree topology as they are in 1-1 correspondence with onto frame homomorphisms, and hence describe
sublocales of locales \cite{DP66,Isb72}. For each frame $L$, let $N(L)$ be the set of nuclei on $L$. There is a natural order on $N(L)$
given by
\[
j\le k \mbox{ iff } ja\le ka \mbox{ for each } a\in L.
\]
With this order $N(L)$ is also a frame \cite{Isb72,BM79,Joh82}, which we will refer to as the \emph{frame of nuclei} or the
\emph{assembly} of $L$. The complicated structure of $N(L)$ has been investigated by many authors;
see for example \cite{DP66, Isb72, Sim78, BM79, Sim80, Mac81, Joh82, NR87, Isb91, Wil94, Ple00, Ple02, BG07, BGJ13, Sim14,BGJ16}.

To describe some of the landmark results about $N(L)$, we recall that a frame $L$ is \emph{spatial} if it is isomorphic to the frame
$\Op S$ of open sets of a topological space $S$.
For a subspace $T$ of $S$, a point $x \in T$ is \emph{isolated} in $T$ if $\{x\}$ is an open subset of $T$, and
\emph{weakly isolated} in $T$ if there is an open subset $U$ of $S$ such that $x \in T \cap U \subseteq \overline{\{x\}}$.
It is clear that an isolated point of $T$ is weakly isolated in $T$.

The space $S$ is \emph{scattered} if each nonempty subspace of $S$ contains an isolated point. It is easy to see that $S$ is scattered
iff each nonempty closed subspace of $S$ has an isolated point.
The space $S$ is \emph{weakly scattered} if each nonempty closed subspace has a weakly isolated point.
It is well known that $S$ is scattered iff $S$ is weakly scattered and $T_D$, where $S$ is
$T_D$ if each point is the intersection of an open and a closed set (see, e.g., \cite[Sec.~VI.8.1]{PP12}).

We next describe some of the landmark results about $N(L)$.
\begin{itemize}
\item Beazer and Macnab \cite{BM79} proved that if $L$ is boolean, then $N(L)$ is isomorphic to $L$, and gave a necessary and sufficient
condition for $N(L)$ to be boolean.
\item Simmons \cite{Sim80} proved that if $S$ is a $T_0$-space, then $N(\Op S)$ is boolean iff $S$ is scattered; and that dropping the $T_0$
assumption results in the following more general statement: $N(\Op S)$ is boolean iff $S$ is dispersed (see Section~\ref{sec: Simmons} for the definition).
\item Simmons \cite[Thm.~4.4]{Sim80} also gave a necessary and sufficient condition
for $S$ to be weakly scattered. This result of Simmons is sometimes stated erroneously as follows: $N(\Op S)$ is spatial iff $S$ is weakly
scattered (see, e.g., \cite[p.~267]{NR87}). While this formulation is false (see Example~\ref{ex: 7.6}), Isbell \cite{Isb91}  proved that if
$S$ is a sober space, then indeed $N(\Op S)$ is spatial iff $S$ is weakly scattered.
\item Niefield and Rosenthal \cite{NR87} gave necessary and sufficient conditions for $N(L)$ to be spatial, and derived that if $N(L)$ is
spatial, then so is $L$.
\end{itemize}

Nuclei on $L$ can be studied by utilizing Priestley duality \cite{Pri70} for distributive lattices and
Esakia duality \cite{Esa74} for Heyting algebras. This was done independently in \cite{PS00} and \cite{BG07}. While the authors of \cite{PS00} did not use Esakia duality, it was utilized in \cite{BG07} where it was shown that nuclei on a Heyting algebra $L$ correspond to special closed subsets
of the Esakia space $X_L$ of $L$ (see Section~\ref{sec: nuclear} for the definition). In this paper we term such closed subsets \emph{nuclear}. In \cite{PS00} these were called $L$-sets.
If $N(X_L)$ denotes all nuclear subsets of $X_L$, then we utilize the dual isomorphism between $N(L)$ and $N(X_L)$ to give an
alternate proof of the results mentioned in the previous paragraph. We single out a subset $Y_L$ of $X_L$ consisting of nuclear points of $X_L$ and show that $L$
is spatial iff $Y_L$ is dense in $X_L$ (see also \cite[Sec.~2.11]{PS00}). We prove that join-prime elements of $N(X_L)$ are exactly the singletons $\{y\}$ where $y\in Y_L$.
From this we derive that there is a bijection between the points of $L$ and the points of $N(L)$. We also obtain a characterization of when
$N(L)$ is spatial in terms of $X_L$, which yields an alternate proof of the results of Niefield and Rosenthal \cite{NR87}.

We prove that $N(L)$ is boolean iff the set of maximal points of each clopen downset of $X_L$ is clopen. From this we derive an alternate
proof of the result of Beazer and Macnab \cite{BM79}. We next turn to the setting of $L=\Op S$ for some topological space $S$. We show
that $Y_{\Op S}$ is homeomorphic to the soberification of $S$, and prove that $N(\Op S)$ is spatial iff $Y_{\Op S}$ is weakly scattered.
This implies that $N(\Op S)$ is spatial iff the soberification of $S$ is weakly scattered. As a corollary we obtain the result of Isbell
\cite{Isb91} that if $S$ is sober, then $N(\Op S)$ is spatial iff $S$ is weakly scattered. We give an example showing that this result
is false if $S$ is not assumed to be sober.

We finally turn to the results of Simmons \cite{Sim80}. One of Simmons' main tools is the use of the front topology. We show that if
$S$ is $T_0$, then $X_L$ is a compactification of $S$ with respect to the front topology on $S$. From this, by utilizing the
$T_0$-reflection, we derive Simmons' characterization \cite[Thm.~4.4]{Sim80} of arbitrary (not necessarily $T_0$) weakly scattered spaces.
In addition, we prove that $N(L)$ is boolean iff $Y_L$ is scattered. From this we derive Simmons' theorem that if $S$ is $T_0$, then
$N(\Op S)$ is boolean iff $S$ is scattered. We generalize this result to an arbitrary
space by showing that $S$ is dispersed iff its $T_0$-reflection is scattered. This yields the general form
of Simmons' theorem that $N(\Op S)$ is boolean iff $S$ is dispersed.

\section{Frames, spaces, and nuclei} \label{sec: frames, spaces, nuclei}

In this section we recall basic facts about frames. We use \cite{Sim78,Joh82,PP12} as our basic references. A \emph{frame} is a complete lattice
$L$ satisfying the join-infinite distributive law
\[
a \wedge \bigvee S = \bigvee \{ a\wedge s \mid s \in S\}.
\]
Frames are complete Heyting algebras where the implication is defined by
\[
a \to b = \bigvee \{ x \in L \mid a \wedge x \le b\}.
\]
We set $\lnot a = a \to 0$.

A \emph{frame homomorphism} is a map $h:L \to K$ preserving finite meets and arbitrary joins.
As usual, we denote by $\sf Frm$ the category of frames and frame homomorphisms.
Let also $\sf Top$ be the category of topological space and continuous maps. There is a contravariant functor $\Op:{\sf Top}\to{\sf Frm}$
sending each topological space $S$ to the frame $\Op S$ of opens of $S$, and each continuous map $f:S\to T$ to the frame homomorphism
$f^{-1}:\Op T\to\Op S$.

To define a contravariant functor in the other direction, we recall that a \emph{point} of a frame $L$ is a frame homomorphism $p:L\to{\sf 2}$
where ${\sf 2}=\{0,1\}$ is the two-element frame. It is well known that there is a one-to-one correspondence between points of $L$, meet-prime
elements of $L$, and completely prime filters of $L$ (see, e.g., \cite[Sec.~II.1.3]{Joh82}).
We will mainly think of points as completely prime filters of $L$, but at times it will also be convenient to think of them as meet-prime
elements of $L$.

Let $\pt(L)$ be the set of points of $L$. For $a\in L$, we set
\[
\eta(a) = \{ x \in \pt(L) \mid a\in x\}.
\]
Then $\{\eta(a)\mid a\in L\}$ is a topology on $\pt(L)$, and $\eta:L\to\Op(\pt L)$ is an onto frame homomorphism. If $h:L\to K$ is a
frame homomorphism, then $\pt(h):\pt(K)\to\pt(L)$ given by $\pt(h)(y)=h^{-1}(y)$ is a continuous map. This defines a contravariant functor
$\pt:{\sf Frm}\to{\sf Top}$.

The functors $\pt,\Op$ yield a contravariant adjunction between $\sf Frm$ and $\sf Top$. The unit of the adjunction is given by the
frame homomorphism $\eta:L\to\Op(\pt L)$, and the counit by the continuous map $\varepsilon:S\to\pt(\Op S)$ where
$\varepsilon(s)=\{U\in\Op S\mid s\in U\}$.

We call a frame $L$ \emph{spatial} if $\eta$ is an isomorphism and a space $S$ \emph{sober} if $\varepsilon$ is a homeomorphism.
It is well known that $L$ is spatial iff whenever $a,b \in L$ with $a \not\le b$, there is a point $x$ with $a \in x$ and $b \notin x$ (see, e.g., \cite[Sec.~II.1.5]{Joh82}); that $S$ is sober iff each irreducible closed set is the closure of a unique point (see, e.g., \cite[Sec.~II.1.6]{Joh82}); and that the contravariant adjunction $(\pt,\Op)$ restricts to a dual equivalence between the category $\sf SFrm$ of spatial frames and
the category $\sf Sob$ of sober spaces (see, e.g., \cite[Sec.~II.1.7]{Joh82}).

\begin{definition} \cite[Def.~1]{Sim78}
A nucleus on a frame $L$ is a map $j : L \to L$ satisfying
\begin{enumerate}
\item $a \le ja$;
\item $jja \le ja$;
\item $j(a\wedge b) = ja \wedge jb$.
\end{enumerate}
\end{definition}

Nuclei play an important role in pointfree topology as they characterize quotients of frames: If $h:L\to K$ is a frame homomorphism and
$r : K \to L$ is its right adjoint, then $rh$ is a nucleus on $L$; conversely, if $j$ is a nucleus on $L$, then the fixpoints
\[
L_j := \{ a\in L \mid a=ja \} = \{ ja \mid a\in L \}
\]
form a frame where finite meets are the same as in $L$ and the joins are defined by
\[
\bigsqcup S=j\left(\bigvee S\right)
\]
for each $S\subseteq L_j$. This establishes a one-to-one correspondence between onto frame homomorphisms and nuclei on $L$ (see, e.g., \cite[Prop.~III.5.3.2]{PP12}).

Let $N(L)$ be the set of all nuclei on $L$. Define a partial order $\le$ on $N(L)$ by
\[
j\le k \mbox{ iff } ja\le ka \mbox{ for each } a\in L.
\]
As we pointed out in the introduction, it is well known that $N(L)$ is a frame with respect to $\le$. Finite meets are
defined in $N(L)$ componentwise, the bottom $\bot$ is the identity nucleus, and the top $\top$ is the nucleus sending
every element of $L$ to $1$. Calculating joins in $N(L)$ is more involved (see, e.g., \cite[Sec.~II.2.5]{Joh82}). The frame $N(L)$ is often
referred to as the \emph{assembly} of $L$.

The following nuclei play an important role:
\begin{align*}
u_a(x) &= a\vee x;\\
v_a(x) &= a \to x;\\
w_a(x) &= (x \to a) \to a.
\end{align*}
It is well known (see, e.g., \cite[Lem.~7(ii)]{Sim78}) that each nucleus $j$ can be written as
\[
j=\bigwedge \{ w_a \mid a = ja \} = \bigwedge \{ w_{ja} \mid a \in L\}.
\]
Moreover, sending $a\in L$ to $u_a$ defines a frame embedding of $L$ into $N(L)$, and we can form the \emph{tower} of assemblies:
\[
L \hookrightarrow N(L) \hookrightarrow N^2(L) \hookrightarrow \cdots
\]

The \emph{booleanization} of a frame $L$ is defined as the fixpoints of $w_0$:
\[
B(L) := \{ w_0(a) \mid a \in L\} = \{ \lnot \lnot a \mid a \in L\}.
\]
It is well known that $B(L)$ is a boolean frame (a complete boolean algebra). In fact, the embedding $L\hookrightarrow N(L)$ factors through $B(N(L))$ since $u_a,v_a$ are complemented elements of $N(L)$, hence
belong to $B(L)$ (see, e.g., \cite[Sec.~II.2.6]{Joh82}).

\section{Priestley and Esakia dualities} \label{sec: dualities}

In this section we recall Priestley duality for bounded distributive lattices, and Esakia duality for Heyting algebras. We use
\cite{Pri70,Pri72,Esa74} as our basic references. A subset of a topological space $X$ is \emph{clopen} if it is both closed and
open. If $\le$ is a partial order on $X$ and $S\subseteq X$, then
\[
{\uparrow}S := \{ x\in X \mid s\le x \mbox{ for some } s\in S\} \mbox{ and }
{\downarrow}S := \{ x\in X \mid x\le s \mbox{ for some } s\in S\}.
\]
If $S=\{s\}$, then we simply write ${\uparrow}s$ and ${\downarrow}s$. We call $S$ an \emph{upset} if $S={\uparrow}S$, and a
\emph{downset} if $S={\downarrow}S$.

\begin{definition} \cite{Pri70}
A \emph{Priestley space} is a pair $(X,\le)$ where $X$ is a compact space, $\le$ is a partial order on $X$, and
the \emph{Priestley separation axiom} holds:
\[
\mbox{ From } x \not\le y \mbox{ it follows that there is a clopen upset } U \mbox{ containing } x \mbox{ and missing } y.
\]
\end{definition}

Let $\sf Pries$ be the category of Priestley spaces and continuous order preserving maps, and let $\sf Dist$ be the category of
bounded distributive lattices and bounded lattice homomorphisms.

\begin{theorem} [Priestley duality]
$\sf Pries$ is dually equivalent to $\sf Dist$.
\end{theorem}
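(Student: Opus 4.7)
The plan is to build the dual equivalence by exhibiting functors in both directions and verifying that the unit and counit are natural isomorphisms.

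First I would define the contravariant functor $\mathcal{X} : {\sf Dist} \to {\sf Pries}$. Given a bounded distributive lattice $D$, let $\mathcal{X}(D)$ be the set of prime filters of $D$, ordered by set inclusion, and topologized by taking as a subbasis the sets $\varphi(a) = \{x : a \in x\}$ together with their complements, as $a$ ranges over $D$. For a lattice homomorphism $h : D \to E$, define $\mathcal{X}(h)(y) = h^{-1}(y)$. Conversely, I would define $\mathcal{D} : {\sf Pries} \to {\sf Dist}$ by sending $(X,\le)$ to the bounded distributive lattice $\mathrm{ClopUp}(X)$ of clopen upsets, and a continuous order-preserving $f : X \to Y$ to $f^{-1} : \mathrm{ClopUp}(Y) \to \mathrm{ClopUp}(X)$.

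Next I would verify that $\mathcal{X}(D)$ really is a Priestley space. Compactness follows from a standard argument using the prime filter theorem: any subbasic cover can be refined to a family whose non-covering would produce, via Zorn, a prime filter not covered. For the separation axiom, if $x \not\subseteq y$ as prime filters then pick $a \in x \setminus y$, and $\varphi(a)$ is the desired clopen upset; it is clopen because $\varphi(a) = X \setminus \bigcup_{b \not\le a} (X \setminus \varphi(b))$ or, more directly, because $\varphi(a)$ is both subbasic open and the complement of the subbasic open $\{x : a \notin x\} = \bigcup_b \{x : b \in x,\ b\wedge a = 0\}$, after checking that the clopen upsets generated by the $\varphi(a)$ already separate. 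Functoriality of $\mathcal{X}$ on morphisms reduces to routine preservation checks.

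Then I would define the natural transformations. The unit $\eta_D : D \to \mathcal{D}\mathcal{X}(D)$ sends $a$ to $\varphi(a)$; by the prime filter theorem $\eta_D$ is injective and preserves the lattice operations, and surjectivity says every clopen upset of $\mathcal{X}(D)$ is $\varphi(a)$ for some $a$. For the counit $\varepsilon_X : X \to \mathcal{X}\mathcal{D}(X)$, send $x \in X$ to $\{U \in \mathrm{ClopUp}(X) : x \in U\}$; this is a prime filter since $\mathrm{ClopUp}(X)$ is a distributive lattice and the principal filter of upsets containing $x$ is prime among clopen upsets. One checks $\varepsilon_X$ is continuous and order preserving, injective by Priestley separation, and surjective by compactness: given a prime filter $F$ of $\mathrm{ClopUp}(X)$, the set $\bigcap F \cap \bigcap \{X \setminus V : V \notin F\}$ is nonempty by the finite intersection property, and any point in it represents $F$.

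The main obstacle is the surjectivity of the unit, i.e.\ showing that every clopen upset $U$ of $\mathcal{X}(D)$ has the form $\varphi(a)$. The standard argument is: $U$ and $X \setminus U$ are compact, and the sets $\varphi(a)$ (clopen upsets from $D$) form a basis for the clopen upsets, so write $U$ as a union of $\varphi(a_i)$'s and use compactness of $U$ to reduce to a finite union, giving $U = \varphi(a_1 \vee \cdots \vee a_n)$; but the nontrivial step is showing that the clopen upsets of the form $\varphi(a)$ already form a basis, which rests on the Priestley separation axiom together with compactness to sandwich any clopen upset between finite meets and joins of basic ones. Once this lemma is in hand, naturality of $\eta$ and $\varepsilon$ is formal, and the dual equivalence follows.
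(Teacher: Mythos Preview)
The paper does not prove this theorem: it is stated as a classical result, with the functors $\mathcal{X}$ and $\mathcal{U}$ (your $\mathcal{D}$) and the unit/counit described immediately afterward but without verification that they yield a dual equivalence. Your proposal is the standard proof of Priestley duality, and its overall architecture is correct.

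That said, one passage is muddled. Your justification that $\varphi(a)$ is clopen is overcomplicated and the formulas you write are wrong: the set $\{x : a \notin x\}$ is \emph{not} equal to $\bigcup_b \{x : b \in x,\ b \wedge a = 0\}$ (a prime filter omitting $a$ need not contain an element disjoint from $a$). But none of this is needed: you defined the topology by taking the sets $\varphi(a)$ \emph{and their complements} as a subbasis, so $\varphi(a)$ is clopen by fiat. The real content of the separation step is just that $\varphi(a)$ is an upset (immediate) and separates $x$ from $y$ when $a \in x \setminus y$.

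For the surjectivity of the unit, your outline is right but the phrase ``the clopen upsets of the form $\varphi(a)$ already form a basis'' should be sharpened: what you actually use is that any clopen upset $U$, being compact open, is a finite union of basic sets $\varphi(a_i)\setminus\varphi(b_i)$, and then the upset condition together with primeness of the filters lets you replace this by a finite union $\bigcup_i \varphi(a_i) = \varphi\bigl(\bigvee_i a_i\bigr)$. Concretely, if $x \in U$ and $U$ is an upset with closed complement, one separates $x$ from each $y \notin U$ by some $\varphi(c)$ (since $x \not\le y$ would give this, but here one rather uses that $x \in \varphi(a_i)\setminus\varphi(b_i) \subseteq U$ and then compactness of $X\setminus U$); filling this in carefully is the only substantive step, as you correctly flag.
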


The contravariant functor $\X:{\sf Dist}\to{\sf Pries}$ sends a bounded distributive lattice $L$ to the Priestley space
$X_L$ of prime filters of $L$ ordered by inclusion. The topology on $X_L$ is given by the basis
\[
\{\varphi(a) \setminus \varphi(b)\mid a,b\in L\}
\]
where
\[
\varphi(a) = \{ x \in X_L \mid a \in x\}.
\]
If $h:L\to K$ is a bounded lattice homomorphism, then $\X(h):X_K\to X_L$ is given by $\X(h)(x)=h^{-1}[x]$.

The contravariant functor $\U:{\sf Pries}\to{\sf Dist}$ sends a Priestley space $X$ to the bounded distributive lattice
$\U(X)$ of clopen upsets of $X$, and a morphism $f:X\to Y$ to the bounded lattice homomorphism
$\U(f):\U(Y)\to\U(X)$ given by $\U(f)(U)=f^{-1}(U)$.

The unit of this dual equivalence is given by the isomorphism $\varphi:L\to\U(X_L)$ in $\sf Dist$, and the counit by the
isomorphism $\xi:X\to X_{\U(X)}$ in $\sf Pries$ given by
\[
\xi(x)=\{U\in\U(X)\mid x\in U\}.
\]

\begin{notation} \label{notation}
For a Priestley space $X$, we denote by $\pi$ the topology on $X$, by $\pi_u$ the topology of open upsets, and by $\pi_d$ the topology
of open downsets. We then have that $\pi=\pi_u\vee\pi_d$. We use $\cl_\pi$ and $\Int_\pi$ to denote the closure and interior in $(X, \pi)$.
\end{notation}

\begin{definition} \cite{Esa74}
A Priestley space $X$ is an \emph{Esakia space} if $U$ clopen in $X$ implies that ${\downarrow}U$ is clopen.
\end{definition}

Let $\sf Esa$ be the category of Esakia spaces and continuous maps $f:X\to Y$ satisfying ${\uparrow}f(x)=f[{\uparrow}x]$. Such maps are
known as \emph{bounded morphisms} or \emph{$p$-morphisms}. Each such map is order preserving, thus $\sf Esa$ is a non-full subcategory
of $\sf Pries$. Let $\sf Heyt$ be the category of Heyting algebras and Heyting algebra homomorphisms. Then $\sf Heyt$ is a non-full
subcategory of $\sf Dist$.

\begin{theorem} [Esakia duality]
$\sf Esa$ is dually equivalent to $\sf Heyt$.
\end{theorem}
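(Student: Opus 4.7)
The plan is to restrict the Priestley duality, already established, to the subcategories $\sf Heyt$ and $\sf Esa$. This requires four things: (i) for a Heyting algebra $L$, the Priestley space $X_L$ is Esakia; (ii) for an Esakia space $X$, the clopen upset lattice $\U(X)$ is a Heyting algebra; (iii) the morphism side matches up, so that Heyting homomorphisms correspond bijectively to $p$-morphisms; and (iv) the natural isomorphisms $\varphi$ and $\xi$ from Priestley duality remain natural isomorphisms in the respective enriched structures.

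For (i), I would start with a basic clopen $\varphi(a)\setminus\varphi(b)$ and establish the key identity
\[
{\downarrow}(\varphi(a)\setminus\varphi(b)) \;=\; X_L\setminus\varphi(a\to b).
\]
The inclusion $\subseteq$ uses that $a\to b\in x$ together with $a\in x$ forces $b\in x$; the reverse inclusion uses the prime filter extension lemma applied to the filter generated by $\{a\}$ together with the ideal generated by $\{b, x\}$ (for $x$ omitting $a\to b$), exploiting the Heyting implication to show these are disjoint. This shows ${\downarrow}$ of a basic clopen is clopen. Since $X_L$ is a Priestley space, it is compact, and every clopen is a finite union of basic clopens; taking ${\downarrow}$ commutes with finite unions, so the Esakia condition follows.

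For (ii), in an Esakia space $X$, given $U, V\in \U(X)$, I would define
\[
U\to V \;=\; X\setminus{\downarrow}(U\setminus V).
\]
Since $U\setminus V$ is clopen, the Esakia axiom makes ${\downarrow}(U\setminus V)$ clopen, hence $U\to V$ is clopen; it is also an upset by construction. To verify it is the Heyting implication, for $W\in\U(X)$ one checks $W\cap U\subseteq V$ iff $W\cap(U\setminus V)=\emptyset$ iff (using that $W$ is an upset) $W\cap{\downarrow}(U\setminus V)=\emptyset$ iff $W\subseteq U\to V$.

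For (iii), given a bounded lattice homomorphism $h:L\to K$ between Heyting algebras, the dual $\X(h):X_K\to X_L$ is continuous and order preserving. I claim $h$ preserves $\to$ iff $\X(h)$ is a $p$-morphism. If $\X(h)$ is a $p$-morphism, then for each $y\in X_K$, $\X(h)({\uparrow}y)={\uparrow}\X(h)(y)$; transcribing this through $\varphi$ and the formula for ${\downarrow}$ of a basic clopen established in (i), both sides of $h(a\to b)=h(a)\to h(b)$ land in every prime filter of $K$ with the same pattern, hence are equal by spatiality of the Priestley representation. Conversely, if $h$ preserves $\to$, then $h^{-1}$ reflects the witnesses constructed by the prime filter extension lemma, which yields the $p$-morphism condition. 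This establishes a bijection of morphisms, and bifunctoriality is inherited from Priestley duality.

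For (iv), the Priestley isomorphism $\varphi:L\to\U(X_L)$ sends $a\to b$ to $\varphi(a)\to\varphi(b)$ by the identity in (i) combined with (ii), hence $\varphi$ is a Heyting isomorphism; and $\xi:X\to X_{\U(X)}$ is automatically a $p$-morphism since it is an order-isomorphism. Naturality was already verified in Priestley duality.

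The main obstacle is step (i), specifically the prime filter extension argument showing ${\downarrow}(\varphi(a)\setminus\varphi(b))\supseteq X_L\setminus\varphi(a\to b)$: one has to produce, for each $x$ with $a\to b\notin x$, a prime filter $y\supseteq x$ with $a\in y$ and $b\notin y$, and this is exactly where the Heyting implication is essential and where the argument would fail for a mere distributive lattice.
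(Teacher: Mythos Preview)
The paper does not actually prove this theorem: it is quoted as background, attributed to Esakia, with only a one-paragraph description of how the Priestley functors restrict (giving the formula $U\to V = X\setminus{\downarrow}(U\setminus V)$ and noting that $\varphi$ becomes a Heyting isomorphism and that bounded morphisms make $\U(f)$ a Heyting homomorphism). Your proposal supplies precisely the standard proof the paper omits, organized around the same restriction-of-Priestley strategy the paper gestures at, and the argument is correct.

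Two small points of phrasing. In (i), ``the ideal generated by $\{b,x\}$'' is garbled since $x$ is a prime filter, not an element; what you want is the filter generated by $x\cup\{a\}$ separated from the principal ideal ${\downarrow}b$, and the disjointness check is exactly the computation $c\wedge a\le b\Rightarrow c\le a\to b\Rightarrow a\to b\in x$ that you allude to. In (iii), you phrase the morphism correspondence only from the algebra side (a lattice map $h$ preserves $\to$ iff $\X(h)$ is a $p$-morphism); for the other half of functoriality it is cleaner to note directly that a $p$-morphism $f$ satisfies $f^{-1}({\downarrow}A)={\downarrow}f^{-1}(A)$, from which $\U(f)(U\to V)=\U(f)(U)\to\U(f)(V)$ is immediate. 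Neither point is a genuine gap.
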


The dual equivalence is established by the same functors $\mathcal{X}$ and $\mathcal{U}$. The additional condition on Esakia spaces
guarantees that $\U(X)$ is a Heyting algebra, where for $U,V\in\U(X)$,
\[
U \to V = X \setminus {\downarrow} (U \setminus V).
\]
Then the unit $\varphi:L\to\U(X_L)$ is a Heyting algebra isomorphism, and so for $a,b\in L$,
\[
\varphi(a\to b) = X_L \setminus {\downarrow} (\varphi(a) \setminus \varphi(b)).
\]
Also, being a bounded morphism yields that $\U(f)$ is a Heyting algebra homomorphism.

Since frames are complete Heyting algebras, their dual Esakia spaces satisfy an additional condition, which is an order-topological
version of extremal disconnectedness.

\begin{definition}
An Esakia space is \emph{extremally order-disconnected} if the closure of each open upset is clopen.
\end{definition}

\begin{theorem}  $($\cite[Thm.~2.3]{PS88}, \cite[Thm.~2.4(2)]{BB08}$)$
A Heyting algebra $L$ is a frame iff the Esakia space $X_L$ is extremally order-disconnected.
\end{theorem}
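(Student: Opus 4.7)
The plan is to translate completeness of the Heyting algebra $L$ into a topological property of $X_L$ by using that clopen upsets of $X_L$ correspond bijectively via $\varphi$ to elements of $L$, and that open upsets of any Priestley space are unions of clopen upsets. First I would establish a preliminary lemma that genuinely uses the Esakia condition: in an Esakia space $X$, the closure of any upset is again an upset. To see this, assume $A$ is an upset, let $z\le w$ with $w\notin\cl_\pi A$, and pick a clopen neighborhood $V$ of $w$ disjoint from $A$. Because $A$ is an upset, no point of $A$ can lie below a point of $V$, so $A\cap{\downarrow}V=\emptyset$. By the Esakia condition ${\downarrow}V$ is clopen, hence $\cl_\pi A\cap{\downarrow}V=\emptyset$; since $z\in{\downarrow}V$, this gives $z\notin\cl_\pi A$, contradicting $z\in\cl_\pi A$.

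For the forward direction, assume $L$ is a frame and let $U$ be an open upset of $X_L$. Write $U=\bigcup_{i\in I}\varphi(a_i)$ and set $a=\bigvee_{i\in I} a_i$, which exists because $L$ is complete. Then $\varphi(a)\supseteq U$ and is closed, so $\cl_\pi U\subseteq\varphi(a)$. For the reverse inclusion, suppose $y\in\varphi(a)\setminus\cl_\pi U$. The lemma ensures $\cl_\pi U$ is an upset, so $X_L\setminus\cl_\pi U$ is an open downset, and I can choose a clopen downset $D$ with $y\in D\subseteq X_L\setminus\cl_\pi U$. Then $D$ misses each $\varphi(a_i)$, so writing $X_L\setminus D=\varphi(b)$ gives $a_i\le b$ for every $i$, hence $a\le b$. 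This forces $\varphi(a)\cap D=\emptyset$, contradicting $y\in\varphi(a)\cap D$. Therefore $\cl_\pi U=\varphi(a)$ is clopen, so $X_L$ is extremally order-disconnected.

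For the converse, assume $X_L$ is extremally order-disconnected and let $\{a_i\}_{i\in I}$ be an arbitrary family in $L$. Then $U=\bigcup_i\varphi(a_i)$ is an open upset; by hypothesis $\cl_\pi U$ is clopen, and by the lemma it is an upset, hence $\cl_\pi U=\varphi(a)$ for a unique $a\in L$. A direct verification using $\varphi(c)\subseteq\varphi(d)$ iff $c\le d$ shows that $a$ is the supremum of $\{a_i\}$ in $L$, so $L$ is a complete Heyting algebra and therefore a frame. I expect the main obstacle to be the preliminary lemma, since without the Esakia condition the closure of an upset in a Priestley space need not be an upset, and the rest of the argument depends essentially on realizing both $\cl_\pi U$ and the required upper bound as clopen upsets of $X_L$.
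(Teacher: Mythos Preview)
Your proof is correct. The paper does not actually supply its own proof of this theorem; it merely quotes the result from \cite[Thm.~2.3]{PS88} and \cite[Thm.~2.4(2)]{BB08}, so there is nothing to compare against in the present text. Your argument is essentially the standard one from those references: the key observation that in an Esakia space the $\pi$-closure of an upset is again an upset is exactly what distinguishes Esakia spaces from general Priestley spaces here, and once that is in hand the translation between suprema in $L$ and closures of open upsets in $X_L$ via the isomorphism $\varphi:L\to\U(X_L)$ goes through as you describe. One cosmetic point: in the lemma you phrase the argument as ending in a contradiction with $z\in\cl_\pi A$, but you never explicitly assumed that; it reads more cleanly as a direct proof of the contrapositive (from $z\le w$ and $w\notin\cl_\pi A$ conclude $z\notin\cl_\pi A$).
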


We will mostly work with extremally order-disconnected Esakia spaces since they are Esakia spaces of frames. We will frequently use the following well-known facts about Esakia spaces (see, e.g., \cite{Esa85}).

\begin{lemma} \label{lem: basic facts}
Let $X$ be an Esakia space.
\begin{enumerate}
\item The order $\le$ is closed in the product $X\times X$, so ${\uparrow}F,{\downarrow}F$ are closed for each
closed $F\subseteq X$.
\item If $F$ is a closed upset and $D$ a closed downset of $X$ with $F \cap D = \varnothing$, then there is a clopen upset $U$ with $F \subseteq U$ and $U \cap D = \varnothing$.
\item Each open upset is a union of clopen upsets and each open downset is a union of clopen downsets.
\item Let $F$ be a closed subset of $X$ and let $\max(F)$ be the set of maximal points of $F$. Then $\max(F) = \max(\down F)$ and $F \subseteq {\downarrow}\max(F)$.
\item If $F$ is a closed subset of $X$, then $\max(F)$ is closed.
\end{enumerate}
\end{lemma}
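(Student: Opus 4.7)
The plan is to prove the five assertions in order, since each part rests on its predecessors.

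For (1), the first task is to show $\le$ is closed in $X \times X$. I verify the complement is open: if $(x,y) \notin {\le}$, then $x \not\le y$, and Priestley separation yields a clopen upset $U$ with $x \in U$ and $y \notin U$, so $U \times (X \setminus U)$ is an open neighborhood of $(x,y)$ entirely disjoint from $\le$. To see $\uparrow F$ is closed for closed $F$, I express it as the image of the closed (hence compact) set ${\le} \cap (F \times X)$ under the second projection $X \times X \to X$: in a compact Hausdorff space, projections send compact sets to compact sets. The case of $\downarrow F$ is symmetric.

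For (2), observe first that each $(x,y) \in F \times D$ satisfies $x \not\le y$, since $D$ is a downset disjoint from $F$. A standard double compactness argument then produces the separator: for fixed $x \in F$, the complements of the clopen upsets supplied by Priestley separation cover the compact set $D$, so finitely many intersect to a clopen upset $U_x$ with $x \in U_x$ and $U_x \cap D = \varnothing$; then the $U_x$ cover the compact set $F$, and the union of a finite subcover is the required clopen upset. Part (3) is immediate: given $x$ in an open upset $U$, apply (2) to the closed upset $\uparrow x$ (closed by (1)) and the closed downset $X \setminus U$ to obtain a clopen upset $V_x$ with $\uparrow x \subseteq V_x \subseteq U$, and take the union over $x \in U$; the open downset case is dual.

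For (4), I apply Zorn's lemma to $\uparrow x \cap F$ for $x \in F$: any chain $C$ has an upper bound in this poset because the family $\{\uparrow c \cap F : c \in C\}$ consists of closed sets (by (1)) with the finite intersection property, so compactness of $X$ yields a common element. The Zorn-maximal element of $\uparrow x \cap F$ obtained this way is automatically maximal in all of $F$, establishing $F \subseteq \downarrow \max(F)$. The equality $\max(F) = \max(\downarrow F)$ is a routine check in each direction using antisymmetry and $F \subseteq \downarrow F$.

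Part (5) is the main obstacle. My plan is to show $X \setminus \max(F)$ is open by producing, for each $x \in F \setminus \max(F)$, a clopen neighborhood of $x$ disjoint from $\max(F)$. By (4) there is a strict successor $y \in \max(F)$ of $x$ in $F$, and Priestley separation applied to $y \not\le x$ produces a clopen upset $U$ with $y \in U$ and $x \notin U$. The key observation is that any $z \in F$ lying in $\downarrow(F \cap U) \setminus U$ satisfies $z < w$ for some $w \in F \cap U \subseteq F$ and so is non-maximal in $F$; moreover $x$ is in this set since $x \le y \in F \cap U$. The difficulty is that $\downarrow(F \cap U) \cap (X \setminus U)$ is merely closed, not open, so it does not directly furnish a neighborhood of $x$. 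I would resolve this by combining the Esakia hypothesis with compactness: writing $F \cap U$ as a directed intersection of clopen sets $C_\alpha$, each $\downarrow C_\alpha$ is clopen by Esakia, and a finite intersection argument should yield a clopen downset around $x$ whose intersection with $F$ consists only of non-maximal elements. This refinement step is the technical heart of the proof, and where the Esakia structure (rather than just Priestley) enters essentially.
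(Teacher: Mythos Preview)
The paper does not supply a proof of this lemma; it is stated as well known with a reference to Esakia's monograph, and the paper remarks immediately afterward that items (1)--(4) already hold in any Priestley space. So there is no argument in the paper to compare against. Your proofs of (1)--(4) are correct and entirely standard.

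Your treatment of (5), however, has a genuine gap at exactly the point you flag as ``the technical heart.'' You correctly isolate the closed set $W := {\downarrow}(F\cap U)\setminus U$, observe that $x\in W$ and $W\cap\max(F)=\varnothing$, and propose to approximate $F\cap U$ from above by clopens $C_\alpha$ so that each ${\downarrow}C_\alpha$ is clopen by the Esakia condition. A routine compactness argument (using that ${\uparrow}z$ is closed) does give $\bigcap_\alpha{\downarrow}C_\alpha={\downarrow}(F\cap U)$, and hence
\[
\bigcap_\alpha\bigl({\downarrow}C_\alpha\setminus U\bigr)\cap\max(F)=\varnothing.
\]
The difficulty is the next step: to extract a single $\alpha$ with $({\downarrow}C_\alpha\setminus U)\cap\max(F)=\varnothing$ (giving the desired clopen neighbourhood of $x$), you would invoke the finite intersection property for the directed family of sets $({\downarrow}C_\alpha\setminus U)\cap\max(F)$. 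But these sets are closed only if $\max(F)$ itself is closed, which is precisely the statement under proof. Replacing $\max(F)$ by $\overline{\max(F)}$ does not rescue the argument either: under the contradiction hypothesis $x\in\overline{\max(F)}$, the point $x$ lies in every $({\downarrow}C_\alpha\setminus U)\cap\overline{\max(F)}$, so that intersection is nonempty and no finite‐subcover conclusion is available. The compactness step as sketched is therefore circular, and a different deployment of the Esakia hypothesis is needed to finish (5).
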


We point out that the first four items of Lemma~\ref{lem: basic facts} hold for any Priestley space.

\section{Nuclear subsets of Esakia spaces} \label{sec: nuclear}

Let $L$ be a Heyting algebra and $X_L$ its Esakia space. It was shown in \cite{BG07} that nuclei on $L$ are characterized as special
closed subsets of $X_L$. Further results in this direction were obtained in \cite{BGJ13,BGJ16}.

\begin{definition}
Let $X$ be an Esakia space.
\begin{enumerate}
\item We call a closed subset $F$ of $X$ a \emph{nuclear subset} provided for each clopen set $U$ in $X$, the set $\down(U \cap F)$
is clopen in $X$.
\item We call $x \in X$ a \emph{nuclear point} if $\{x\}$ is a nuclear subset of $X$.
\item Let $N(X)$ be the set of all nuclear subsets of $X$ ordered by inclusion.
\end{enumerate}
\end{definition}

\begin{remark}
In \cite{BG07} nuclear subsets of $X$ were called \emph{subframes} of $X$ because of their connection to subframe logics. In \cite{PS00} nuclear sets were called $L$-sets and nuclear points $L$-points. For our purposes the adjective nuclear appears more appropriate.
\end{remark}

\begin{theorem} \cite[Thm.~30]{BG07}
Let $L$ be a Heyting algebra and $X_L$ its Esakia space. Then $N(L)$ is dually isomorphic to $N(X_L)$.
\end{theorem}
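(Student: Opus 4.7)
\emph{Proof plan.} I would construct mutually inverse order-reversing maps $\Phi : N(L) \to N(X_L)$ and $\Psi : N(X_L) \to N(L)$ directly. For the forward map, define
\[
\Phi(j) = F_j := \{ x \in X_L : (\forall a \in L)\ ja \in x \Rightarrow a \in x \},
\]
which is the image of the Esakia-dual map of the quotient $L \to L_j$. For the reverse map, define $\Psi(F) = j_F$ by the rule
\[
\varphi\bigl(j_F(a)\bigr) = X_L \setminus {\downarrow}\bigl((X_L \setminus \varphi(a)) \cap F\bigr).
\]
The nuclear condition on $F$, applied to the clopen set $X_L \setminus \varphi(a)$, guarantees the right-hand side is a clopen upset, so $j_F(a) \in L$ is well-defined via the Esakia isomorphism $\varphi : L \to \U(X_L)$.

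Verifying that $j_F$ is a nucleus is quick: $a \le j_F(a)$ holds because $X_L \setminus \varphi(a)$ is a downset; preservation of finite meets follows from distributivity of ${\downarrow}$ and $\cap F$ over unions; and idempotence reduces to the identity ${\downarrow}({\downarrow}D \cap F) = {\downarrow}D$ whenever $D \subseteq F$. For mutual inversion, $F \subseteq F_{j_F}$ is immediate, while $F_{j_F} \subseteq F$ uses Priestley separation: given $x \notin F$, pick a basic clopen $\varphi(b) \setminus \varphi(c)$ containing $x$ and disjoint from $F$; then $a := c$ yields $j_F(c) \in x$ and $c \notin x$, so $x \notin F_{j_F}$. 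The dual equality $j_{F_j} = j$ follows from the fixpoint description $L_j = \{ja : a \in L\}$ together with the definition of $F_j$. Order reversal $j \le k \Leftrightarrow F_k \subseteq F_j$ is immediate from the definition of $F_j$, since a larger nucleus imposes more implications and hence cuts out a smaller set.

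The main obstacle is proving that $F_j$ is actually nuclear. Closedness is immediate since $F_j = \bigcap_{a \in L}\bigl((X_L \setminus \varphi(ja)) \cup \varphi(a)\bigr)$ is an intersection of clopens. The clopen-downset condition, however, requires genuine calculation: after reducing to basic clopens $\varphi(a) \setminus \varphi(b)$ and distributing ${\downarrow}$ over finite unions, one must identify ${\downarrow}((\varphi(a) \setminus \varphi(b)) \cap F_j)$ as the complement of a clopen upset $\varphi(c)$, where $c$ is explicitly built from $j$, $a$, $b$, and the Heyting implication on $L$. The cleanest route uses that $F_j$, with the induced order and topology, realizes $L_j$ as its lattice of clopen upsets; the required identification then comes from computing the Heyting implication in $L_j$ and lifting it back to $L$ via $j$.
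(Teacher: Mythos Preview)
The paper does not give its own proof of this theorem; it cites \cite[Thm.~30]{BG07} and only records the two maps $j\mapsto N_j$ and $N\mapsto j_N$. Your maps agree with these exactly: since prime filters are upsets and $a\le ja$, the condition $j^{-1}(x)=x$ reduces to your $(\forall a)\,(ja\in x\Rightarrow a\in x)$, so $F_j=N_j$; and $N\setminus\varphi(a)=(X_L\setminus\varphi(a))\cap N$, so $j_F=j_N$.

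Your verification outline is sound. The nucleus axioms for $j_F$ and the inclusion $F\subseteq F_{j_F}$ are routine, and your Priestley-separation argument for $F_{j_F}\subseteq F$ is correct (the key point being that $F\setminus\varphi(c)\subseteq X_L\setminus\varphi(b)$, a downset, so $x\in\varphi(b)$ forces $x\notin{\downarrow}(F\setminus\varphi(c))$). For $j_{F_j}=j$ the nontrivial inclusion $X_L\setminus\varphi(ja)\subseteq{\downarrow}(F_j\setminus\varphi(a))$ is exactly where the identification of $F_j$ with $X_{L_j}$ is needed: given $x$ with $ja\notin x$, pass to the filter $j^{-1}(x)\cap L_j$ of $L_j$, extend to a prime filter $z$ of $L_j$ avoiding $ja$, and pull back to $y=j^{-1}(z)\in F_j$ above $x$. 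The same identification, as you say, handles nuclearity of $F_j$: once $F_j\cong X_{L_j}$ as Esakia spaces, ${\downarrow}\big((\varphi(a)\setminus\varphi(b))\cap F_j\big)$ is the complement of $\varphi\big(j(a\to jb)\big)$, hence clopen. So your plan is a faithful reconstruction of what the cited reference does, and there is nothing further in the present paper to compare against.
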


The dual isomorphism is obtained as follows: If $j$ is a nucleus on $L$, then
\[
N_j := \{ x\in X_L \mid j^{-1}(x)=x \}
\]
is a nuclear subset of $X_L$; if $N$ is a nuclear subset of $X_L$, then $j_N:L\to L$ given by
\[
\varphi(j_N a) = X_L \setminus \down(N \setminus \varphi(a))
\]
is a nucleus on $L$; and this correspondence is a dual isomorphism. Therefore, if $L$ is a frame, and hence $X_L$ is an extremally
order-disconnected Esakia space, then $N(X_L)$ is a \emph{coframe}; that is, a complete lattice satisfying the meet-infinite distributive
law
\[
a\vee\bigwedge S=\bigwedge\{a\vee s\mid s\in S\}.
\]

Under the dual isomorphism, the bottom of $N(L)$ corresponds to $X_L$, the top of $N(L)$ to $\varnothing$, and $N_{j\wedge k}=N_j \cup N_k$
\cite[Cor.~31]{BG07}. Moreover, we have:

\begin{theorem} \cite[Thm.~34]{BG07} \label{thm: N_u}
\begin{enumerate}
\item $N_{u_a} = X_L \setminus \varphi(a);$
\item $N_{v_a} = \varphi(a);$
\item $N_{w_a} = \max(X_L \setminus\varphi(a)).$
\end{enumerate}
\label{thm: w_a}
\end{theorem}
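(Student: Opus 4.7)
My plan begins with a uniform reduction: for any nucleus $j$ on $L$, the inflationary law $b \le jb$ forces $x \subseteq j^{-1}(x)$ for every prime filter $x \in X_L$, so $x \in N_j$ iff for every $b \in L$, $jb \in x$ implies $b \in x$. I would verify this criterion for $u_a$, $v_a$, $w_a$ by a case analysis on whether $a \in x$. For (1), $a \in x$ is ruled out by $u_a(0) = a \in x$ with $0 \notin x$, while $a \notin x$ combined with primality of $x$ forces $u_a(b) = a \vee b \in x$ to yield $b \in x$. For (2), $a \in x$ gives $a \wedge (a \to b) \le b$ and hence $v_a(b) \in x \Rightarrow b \in x$, while $a \notin x$ is ruled out by $v_a(a) = 1 \in x$ together with $a \notin x$.

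Part (3) requires a three-way split. If $a \in x$, then $w_a(0) = (0 \to a) \to a = a \in x$ while $0 \notin x$, so $x \notin N_{w_a}$. If $a \notin x$ and $x$ is maximal in $X_L \setminus \varphi(a)$, I would argue by contradiction. Assuming $(b \to a) \to a \in x$ and $b \notin x$, either $b \to a$ lies in $x$ --- in which case $(b \to a) \wedge ((b \to a) \to a) \le a$ puts $a$ into $x$, a contradiction --- or $b \to a \notin x$, in which case the filter generated by $x \cup \{b\}$ still avoids $a$ (since $c \wedge b \le a$ iff $c \le b \to a$), so it extends to a prime filter properly above $x$ missing $a$, contradicting maximality. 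Hence $x \in N_{w_a}$.

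The substantive case is $a \notin x$ with $x$ not maximal in $X_L \setminus \varphi(a)$: I must exhibit $b \in L$ with $b \notin x$ and $(b \to a) \to a \in x$. I would set $M = \max(X_L \setminus \varphi(a)) \cap \up x$. By Lemma~\ref{lem: basic facts}(4)--(5), $M$ is a nonempty closed antichain; non-maximality of $x$ places $x$ outside $M$, and since each $m \in M$ satisfies $m \ge x$ with $m \ne x$, we also have $x \notin \up M$. Esakia separation (Lemma~\ref{lem: basic facts}(2)), applied to the closed upset $\up M$ and the closed downset $\down x$, yields a clopen upset $\varphi(b)$ with $\up M \subseteq \varphi(b)$ and $x \notin \varphi(b)$, so in particular $b \notin x$. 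Unfolding $\varphi(c \to d) = X_L \setminus \down(\varphi(c) \setminus \varphi(d))$ twice reduces $(b \to a) \to a \in x$ to the assertion that every $t \ge x$ either lies in $\varphi(a)$ or lies below some point of $\varphi(b) \setminus \varphi(a)$. For $t \ge x$ with $a \notin t$, applying Lemma~\ref{lem: basic facts}(4) to the closed set $\up t \cap (X_L \setminus \varphi(a))$ extends $t$ to a maximal element of $X_L \setminus \varphi(a)$, which necessarily lies in $M \subseteq \varphi(b)$ and thus supplies the required $s$.

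The main obstacle is precisely this uniform witness construction in the non-maximal case. A $b$ lying in a single chosen $z > x$ with $a \notin z$ is insufficient, since $(b \to a) \to a \in x$ quantifies over all $t \ge x$ simultaneously. The resolution is to identify $\up M$, rather than a single point, as the correct closed upset to separate from $x$; once Esakia separation supplies a clopen upset around $\up M$ missing $x$, the Heyting-implication bookkeeping reduces uniformly to the fact that any $t \ge x$ missing $a$ extends to an element of $M$.
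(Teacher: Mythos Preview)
The paper does not supply a proof of this theorem; it is quoted from \cite[Thm.~34]{BG07} and used as a black box. Your argument is correct and self-contained, so there is nothing in the present paper to compare it against.

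A few comments on the details. Parts (1) and (2) are handled cleanly by your criterion $x\in N_j \iff (\forall b)(jb\in x\Rightarrow b\in x)$. In part (3), the maximal case is fine, though the two-way split is slightly redundant: the second branch actually proves the stronger fact that for $x$ maximal in $X_L\setminus\varphi(a)$ and $b\notin x$ one always has $b\to a\in x$ (via the prime-filter extension), and this together with $(b\to a)\to a\in x$ already forces $a\in x$. The non-maximal case is the real content, and your construction is correct: the key observation, which you identify explicitly, is that one must separate $x$ from the entire closed upset $\up M$ rather than from a single witness $z>x$. Once Lemma~\ref{lem: basic facts}(2) produces such a $\varphi(b)$, the verification that $(b\to a)\to a\in x$ unwinds exactly as you describe, using Lemma~\ref{lem: basic facts}(4) to push any $t\ge x$ with $a\notin t$ up to an element of $M\subseteq\varphi(b)\setminus\varphi(a)$. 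The only implicit step worth making explicit is that $\up M\cap\down x=\varnothing$ (needed to invoke the separation lemma): if $z\le x$ and $z\ge m$ for some $m\in M$, then $m\le x\le m$ forces $x=m\in M$, contradicting non-maximality.
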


The following is a quick corollary of Theorem~\ref{thm: N_u}(3).

\begin{corollary} \label{cor: max}
Let  $X$ be an Esakia space.
\begin{enumerate}
\item If $D$ is a clopen downset of $X$, then $\max(D) \in N(X)$.
\item If $U$ is a clopen subset of $X$, then $\max(U) \in N(X)$.
\item If $N \in N(X)$, then $\max(N) \in N(X)$.
\end{enumerate}
\end{corollary}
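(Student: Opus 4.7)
The plan is to prove (1) directly from Theorem~\ref{thm: N_u}(3) and then bootstrap (2) and (3) from (1) by replacing each given set $F$ with its downset $\down F$, exploiting that Lemma~\ref{lem: basic facts}(4) guarantees $\max(F) = \max(\down F)$ whenever $F$ is closed.

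For (1) I would invoke Esakia duality to identify $X$ with $X_L$ for $L := \U(X)$, so that every clopen upset of $X$ has the form $\varphi(a)$ for some $a \in L$. Given a clopen downset $D$, its complement is a clopen upset $\varphi(a)$, hence $D = X \setminus \varphi(a)$, and Theorem~\ref{thm: N_u}(3) immediately delivers $\max(D) = N_{w_a} \in N(X)$.

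For (2) the defining property of Esakia spaces turns a clopen $U$ into a clopen downset $\down U$; combining (1) with $\max(U) = \max(\down U)$ gives $\max(U) \in N(X)$. For (3), taking the clopen set $X$ itself in the nuclearity condition for $N$ shows that $\down N = \down(X \cap N)$ is clopen, hence a clopen downset, and then (1) together with $\max(N) = \max(\down N)$ yields $\max(N) \in N(X)$. There is no substantial obstacle; the key observation is that both the Esakia axiom in (2) and the $U = X$ instance of the nuclearity condition in (3) produce exactly the clopen downset needed to reduce to (1).
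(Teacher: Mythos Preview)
Your proposal is correct and follows essentially the same route as the paper: part (1) is obtained from Theorem~\ref{thm: N_u}(3), and parts (2) and (3) are reduced to (1) via $\max(F) = \max(\down F)$ after observing that $\down U$ (respectively $\down N$) is a clopen downset. The only difference is that you spell out the identification $X \cong X_L$ in (1) and the reason $\down N$ is clopen in (3), both of which the paper leaves implicit.
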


\begin{proof}
(1). This is an immediate consequence of Theorem~\ref{thm: N_u}(3).

(2). This follows from (1) since $\max(U) = \max(\down U)$ (see Lemma~\ref{lem: basic facts}(4)).

(3). This follows from (1) since $\down N$ is a clopen downset.
\end{proof}

We will also use the following basic facts about nuclear sets of extremally order-disconnected Esakia spaces.

\begin{lemma} \label{lem: basic nuclear facts}
Let $X$ be an extremally order-disconnected Esakia space.
\begin{enumerate}
\item If $U$ is a clopen subset of $X$, then $U \in N(X)$.
\item If $F$ is a regular closed subset of $X$, then $F \in N(X)$.
\item If $U$ is clopen and $N$ is nuclear, then $U \cap N \in N(X)$.
\end{enumerate}
\end{lemma}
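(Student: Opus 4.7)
The plan is to handle (1) and (3) directly from the Esakia axiom, leaving (2) as the substantive step in which extremal order-disconnectedness is essential.

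For (1), suppose $U$ is clopen and let $V$ be any clopen in $X$. Then $V \cap U$ is clopen, so $\down(V \cap U)$ is clopen by the defining property of Esakia spaces, proving $U \in N(X)$. For (3), let $U$ be clopen and $N$ nuclear. Given a clopen $V$, the set $V \cap U$ is again clopen, so $\down((V \cap U) \cap N) = \down(V \cap (U \cap N))$ is clopen by the nuclearity of $N$, whence $U \cap N \in N(X)$.

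For (2), let $F$ be regular closed and $V$ clopen. I first reduce the statement. Since $V$ is both open and closed, the elementary identity $V \cap \cl A = \cl(V \cap A)$ applies, giving $V \cap F = V \cap \cl(\Int F) = \cl(V \cap \Int F)$. Because $V \cap \Int F \subseteq \Int(V \cap F)$, taking closures shows $V \cap F$ is itself regular closed, so it suffices to prove that $\down F$ is clopen for every regular closed $F$. By Lemma~\ref{lem: basic facts}(1) the set $\down F$ is closed, so the issue is openness. The plan is to identify $\down F$ with $\cl(\down \Int F)$. Since $\Int F$ is open, Lemma~\ref{lem: basic facts}(3) lets us write it as a union of clopens, and applying the Esakia axiom termwise shows that $\down \Int F$ is an open downset; invoking extremal order-disconnectedness on this open downset should then yield that its closure is clopen.

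The principal obstacle is twofold. First, the equality $\down F = \cl(\down \Int F)$ requires the order-topological interaction in an Esakia space: the easy inclusion uses that $\down F$ is closed and contains $\down \Int F$, while for the reverse one must show, given $x \le y \in \cl(\Int F)$, that every open neighborhood of $x$ contains a point lying below some element of $\Int F$. This is where the Priestley-separation variant of Lemma~\ref{lem: basic facts}(2) together with the Esakia axiom $\down(\text{clopen}) = \text{clopen}$ enters. Second, the hypothesis of extremal order-disconnectedness is stated for open \emph{upsets}; transferring it to open \emph{downsets} is a delicate step that I would handle by passing to the dual description of $N(X)$ as a coframe, equivalently, using the frame structure of $\U(X)$ from the characterization of extremally order-disconnected Esakia spaces recalled just before Lemma~\ref{lem: basic facts}. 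Once these two ingredients are in place, the clopenness of $\down F$ follows, completing the proof of (2).
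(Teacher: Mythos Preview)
Your arguments for (1) and (3) are correct and match the paper's (which are equally brief). The paper does not prove (2) directly but cites \cite[Prop.~4.11]{BGJ13}; your reduction to showing that $\down F$ is clopen for every regular closed $F$ is sound, but the plan you outline to finish from there has a genuine gap.

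The problem is your step invoking extremal order-disconnectedness on $\down\Int_\pi F$: the definition says the closure of every open \emph{upset} is clopen, and this is not self-dual. Your suggestion to ``transfer it to open downsets via the dual description of $N(X)$ as a coframe'' does not work: that $N(X)$ is a coframe is a consequence of the dual isomorphism with $N(L)$ and the fact that $N(L)$ is a frame, and says nothing about closures of open downsets in $X$. Your other obstacle, the equality $\down F=\cl_\pi(\down\Int_\pi F)$, is likewise not established by the appeal to Lemma~\ref{lem: basic facts}(2) and the Esakia axiom; the difficulty is precisely that $\up W$ need not be open for clopen $W$, so one cannot pass from $\up W\cap\Int_\pi F=\varnothing$ to $\up W\cap F=\varnothing$.

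The fix is to dualize your approach at the outset: work with the open \emph{upset} $U:=X\setminus\down F$ rather than the open downset $\down\Int_\pi F$. Extremal order-disconnectedness gives that $\cl_\pi(U)$ is a clopen upset. Writing $\Int_\pi F=\bigcup_\alpha C_\alpha$ with each $C_\alpha$ clopen, one has $C_\alpha\subseteq\down F$, hence $\down C_\alpha\cap U=\varnothing$; since $\down C_\alpha$ is clopen by the Esakia axiom, also $\down C_\alpha\cap\cl_\pi(U)=\varnothing$, so $C_\alpha\cap\cl_\pi(U)=\varnothing$ for every $\alpha$. As $\cl_\pi(U)$ is open, this forces $F=\cl_\pi(\Int_\pi F)$ to miss $\cl_\pi(U)$, and since $\cl_\pi(U)$ is an upset, $\down F\cap\cl_\pi(U)=\varnothing$. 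Thus $\cl_\pi(U)=U$ and $\down F$ is clopen. This is exactly the argument the paper later gives in Lemma~\ref{lem: nuclear}, specialized to the case where the nuclear sets are clopen.
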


\begin{proof}
(1). This is immediate since $X$ is an Esakia space.

(2). See \cite[Prop.~4.11]{BGJ13}.

(3). Let $V$ be clopen in $X$. Then $V \cap U$ is clopen, so $\down(V \cap U \cap N)$ is clopen since $N \in N(X)$. Thus, $U \cap N \in N(X)$.
\end{proof}

In contrast to Lemma~\ref{lem: basic nuclear facts}(3), it is not the case that the intersection of two nuclear sets is nuclear as the following simple example shows.
\begin{example}
Let $\mathbb{N}$ be the set of natural numbers. We view $\mathbb{N}$ as a discrete space, and let $X$ be the one-point compactification of $\mathbb{N}$ with order as in the following picture.
\begin{center}\begin{tikzpicture} [scale=0.7]
\draw [fill] (3,5) circle[radius=.07];
\draw [fill] (3,4) circle[radius=.07];
\draw [fill] (3,3) circle[radius=.07];
\draw [fill] (3,1.8) circle[radius=.02];
\draw [fill] (3,1.5) circle[radius=.02];
\draw [fill] (3,1.2) circle[radius=.02];
\draw [fill] (3,0) circle[radius=.07];
\node [right] at (3.1,5) {$0$};
\node [right] at (3.1,4) {$1$};
\node [right] at (3.1,3) {$2$};
\node [right] at (3.1,0) {$\infty$};
\draw (3,5) -- (3,2.5);
\draw (3,0) -- (3, .5);
\end{tikzpicture}
\end{center}
If $A$ and $B$ are the sets of even and odd numbers in $\mathbb{N}$, respectively, it is easy to see that $A \cup \{\infty\}$ and $B \cup \{\infty\}$ are both nuclear sets, but their intersection $\{\infty\}$ is not nuclear.
\end{example}

We conclude this
section by showing how to calculate meets in $N(X_L)$. This provides a dual description of joins in $N(L)$. For this we require the following lemma (see \cite[Prop.~2.5]{PS00}). 

\begin{lemma} \label{lem: nuclear}
Let $X$ be an extremally order-disconnected Esakia space and let $\{ N_\alpha \mid \alpha \in \Gamma\}$ be a family of nuclear subsets
of $X$. Then $\cl_\pi\left(\bigcup \{ N_\alpha \mid \alpha \in \Gamma \}\right)$ is a nuclear subset of $X$.
\end{lemma}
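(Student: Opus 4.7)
The plan is to use the dual isomorphism $N(L) \cong N(X)^{op}$ (with $L = \U(X)$) to move the problem into $N(L)$. For each $\alpha$, let $j_\alpha \in N(L)$ be the nucleus dual to $N_\alpha$. Since $X$ is extremally order-disconnected, $L$ is a frame, so the pointwise meet $j = \bigwedge_\alpha j_\alpha$, defined by $j(a) = \bigwedge_\alpha j_\alpha(a)$, is itself a nucleus on $L$. I would then show that its dual nuclear set $N_j$ equals $\cl_\pi\!\left(\bigcup_\alpha N_\alpha\right)$; this equality makes the right-hand side nuclear, proving the lemma.

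The containment $N_j \supseteq \cl_\pi\!\left(\bigcup_\alpha N_\alpha\right)$ is essentially formal: $j \le j_\alpha$ for each $\alpha$ and the order-reversing duality give $N_j \supseteq N_{j_\alpha} = N_\alpha$, and $N_j$ being closed then forces $N_j$ to contain the closure.

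For the reverse containment I would argue by contradiction, assuming $x \in N_j \setminus \cl_\pi\!\left(\bigcup_\alpha N_\alpha\right)$. Using the basis $\{\varphi(a) \setminus \varphi(b) \mid a,b \in L\}$ of the Priestley topology, I would pick a basic clopen neighborhood $W = \varphi(a) \setminus \varphi(b)$ of $x$ disjoint from every $N_\alpha$. The disjointness rearranges to the set inclusion $N_\alpha \setminus \varphi(b) \subseteq X \setminus \varphi(a)$, and since $X \setminus \varphi(a)$ is already a downset, applying $\down$ preserves it. Plugging into the formula $\varphi(j_\alpha b) = X \setminus \down(N_\alpha \setminus \varphi(b))$ from Section~\ref{sec: nuclear} then converts the inclusion into the algebraic inequality $a \le j_\alpha b$, valid for every $\alpha$. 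Taking the meet over $\alpha$ gives $a \le j(b)$; since $a \in x$ and $x$ is upward closed, $j(b) \in x$, and then $x \in N_j$ (i.e., $j^{-1}(x) = x$) forces $b \in x$, contradicting $b \notin x$.

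The main obstacle is this second containment: the delicate move is converting topological disjointness of a basic clopen from each $N_\alpha$ into a single uniform algebraic inequality $a \le j_\alpha b$ that can then be aggregated over $\alpha$. The two ingredients that make this step clean are the explicit formula $\varphi(j_N a) = X \setminus \down(N \setminus \varphi(a))$ for the nucleus dual to a nuclear set, and the fact that the complement of the clopen upset $\varphi(a)$ is itself a downset, so $\down$ passes through the inclusion without changing the right-hand side. Extremal order-disconnectedness is used only to ensure $L$ is complete, so that the infinite pointwise meet $j = \bigwedge_\alpha j_\alpha$ is well-defined in $L$.
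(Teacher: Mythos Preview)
Your proof is correct but takes a genuinely different route from the paper's. The paper works purely on the topological side: setting $F=\cl_\pi\bigl(\bigcup_\alpha N_\alpha\bigr)$, it first shows $\down F$ is clopen by proving that the open upset $U=X\setminus\down F$ satisfies $U=\cl_\pi(U)$, using extremal order-disconnectedness explicitly (each $\down N_\alpha$ is clopen, hence misses $\cl_\pi(U)$, so $F$ and then $\down F$ miss $\cl_\pi(U)$); the general case $\down(V\cap F)$ is then reduced to this one by writing $V\cap F=\cl_\pi\bigl(\bigcup_\alpha V\cap N_\alpha\bigr)$ and invoking Lemma~\ref{lem: basic nuclear facts}(3). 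You instead pass through the duality $N(L)\cong N(X)^{\mathrm{op}}$: you identify the nucleus dual to $F$ as the pointwise meet $j=\bigwedge_\alpha j_\alpha$ and verify $N_j=F$ directly, the key step being the conversion of the disjointness $(\varphi(a)\setminus\varphi(b))\cap N_\alpha=\varnothing$ into $a\le j_\alpha b$ via the formula $\varphi(j_\alpha b)=X\setminus\down(N_\alpha\setminus\varphi(b))$. The paper's argument is more self-contained on the Esakia-space side and makes the role of extremal order-disconnectedness visible; your argument is more algebraic, uses that hypothesis only to guarantee completeness of $L$, and as a by-product immediately identifies which nucleus corresponds to $F$ (namely $\bigwedge_\alpha j_\alpha$), something the paper obtains only after the subsequent Theorem~\ref{prop: meet in NX}.
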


\begin{proof}
Let $F = \cl_\pi\left(\bigcup \{ N_\alpha \mid \alpha \in \Gamma \}\right)$. Then $F$ is closed. Since $X$ is a Priestley space, $\down F$
is closed. Therefore, $U := X \setminus \down F$ is an open upset. As $X$ is an extremally order-disconnected Esakia space,
$\cl_\pi(U)$ is a clopen upset. We show that $U = \cl_\pi(U)$. Clearly $N_\alpha \cap U = \varnothing$ for each $\alpha$, so
$\down N_\alpha \cap U = \varnothing$ since $U$ is an upset. Because $N_\alpha$ is nuclear, $\down N_\alpha$ is clopen, and hence
$\down N_\alpha \cap \cl_\pi(U) = \varnothing$. Thus, $N_\alpha \cap \cl_\pi(U) = \varnothing$ for each $\alpha$, and so
$F \cap \cl_\pi(U) = \varnothing$. Finally, as $\cl_\pi(U)$ is an upset, $\down F \cap \cl_\pi(U) = \varnothing$. This proves that
$U = \cl_\pi(U)$, so $U$ is clopen, and hence $\down F$ is clopen.

Now, let $V$ be a clopen subset of $X$. Then
\[
V \cap F = V \cap \cl_\pi\left(\bigcup \{ N_\alpha \mid \alpha \in \Gamma \}\right) =
\cl_\pi\left(\bigcup \{ V \cap N_\alpha \mid \alpha \in \Gamma\}\right).
\]
Since each $V \cap N_\alpha$ is nuclear by Lemma~\ref{lem: basic nuclear facts}(3), $\down (V\cap F)$ is clopen by the previous argument. Thus, $F$ is a nuclear subset of $X$.
\end{proof}

\begin{theorem} \label{prop: meet in NX}
Let $X$ be an extremally order-disconnected Esakia space, $\{N_\alpha \mid \alpha \in \Gamma \}$ a family of nuclear subsets of $X$,
and $N=\bigcap \{N_\alpha \mid \alpha \in \Gamma \}$. Then the meet of $\{N_\alpha \mid \alpha \in \Gamma \}$ in $N(X)$ is calculated
by the formula$:$
\[
\bigwedge \{N_\alpha \mid \alpha \in \Gamma\} = \cl_\pi\left( \bigcup \left\{ F \in N(X) \mid F \subseteq N \right\} \right).
\]
\end{theorem}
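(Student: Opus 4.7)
The plan is to show that the set $M := \cl_\pi\!\left(\bigcup \{F \in N(X) \mid F \subseteq N\}\right)$ is the greatest lower bound of $\{N_\alpha \mid \alpha \in \Gamma\}$ in the poset $(N(X), \subseteq)$. Since $N(X)$ is a coframe (dual to the frame $N(L)$), its meets agree with poset infima, so verifying the three conditions (i) $M \in N(X)$, (ii) $M \subseteq N_\alpha$ for every $\alpha$, and (iii) $M$ contains every nuclear subset contained in all the $N_\alpha$, will identify $M$ with $\bigwedge \{N_\alpha\}$.

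Step (i), nuclearity of $M$, is immediate from Lemma~\ref{lem: nuclear}: $M$ is by construction the $\pi$-closure of a union of nuclear sets. This is the single ingredient that does real work, and it is exactly why the formula is stated in terms of a closure rather than a bare union: as the example right before this theorem shows, even finite intersections (and, dually, unions) of nuclear sets need not remain nuclear, so we cannot hope to describe the meet by a naive set-theoretic operation.

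Step (ii) I would handle as follows. Every $F$ in the defining family satisfies $F \subseteq N = \bigcap_\alpha N_\alpha$, and in particular $F \subseteq N_\alpha$ for every $\alpha$. Hence $\bigcup \{F \in N(X) \mid F \subseteq N\} \subseteq N_\alpha$. Since nuclear subsets are by definition closed in $\pi$, each $N_\alpha$ is $\pi$-closed, so taking $\pi$-closures preserves this containment and gives $M \subseteq N_\alpha$.

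Step (iii) is essentially tautological: if $F' \in N(X)$ is a lower bound, meaning $F' \subseteq N_\alpha$ for every $\alpha$, then $F' \subseteq \bigcap_\alpha N_\alpha = N$, so $F'$ is one of the sets appearing in the union defining $M$, and thus $F' \subseteq M$. Combining (i)--(iii) yields the theorem. I do not anticipate any real obstacle; Lemma~\ref{lem: nuclear} supplies all the topological content, and the rest is a standard universal-property style verification.
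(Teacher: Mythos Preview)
Your proposal is correct and follows the same approach as the paper: invoke Lemma~\ref{lem: nuclear} for nuclearity, then verify the greatest-lower-bound property. The paper compresses your steps (ii) and (iii) into the single phrase ``it follows from the definition,'' but the content is identical.
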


\begin{proof}
By Lemma~\ref{lem: nuclear}, $\cl_\pi\left( \bigcup \left\{ F \in N(X) \mid F \subseteq N \right\} \right)$ is a nuclear subset of $X$,
hence it follows from the definition that $\cl_\pi\left( \bigcup \left\{ F \in N(X) \mid F \subseteq N \right\} \right)$ is the greatest
lower bound of $\{N_\alpha \mid \alpha \in \Gamma \}$ in $N(X)$.
Thus, $\cl_\pi\left( \bigcup \left\{ F \in N(X) \mid F \subseteq N \right\} \right)=\bigwedge \{N_\alpha \mid \alpha \in \Gamma\}$.
\end{proof}

\begin{corollary}
Let $L$ be a frame, $X_L$ its Esakia space, and $\{ j_\alpha \mid \alpha \in \Gamma \}$ a family of nuclei on $L$. Then
\[
N_{\bigvee j_\alpha} = \bigwedge N_{j_\alpha}.
\]
\end{corollary}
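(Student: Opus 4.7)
The plan is to deduce this as an essentially formal consequence of the dual isomorphism $N(L) \cong N(X_L)^{\mathrm{op}}$ established in \cite[Thm.~30]{BG07} (quoted above), combined with Theorem~\ref{prop: meet in NX} which computes meets in $N(X_L)$.

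First I would recall that the map $j \mapsto N_j$ is a dual order isomorphism from $N(L)$ to $N(X_L)$. Any order isomorphism preserves suprema and infima, and a dual order isomorphism interchanges them: if $\{a_\alpha\}$ is a family in a complete lattice $A$ and $\Phi \colon A \to B$ is a dual isomorphism onto a complete lattice $B$, then $\Phi\bigl(\bigvee a_\alpha\bigr) = \bigwedge \Phi(a_\alpha)$. Applied to the family $\{j_\alpha \mid \alpha \in \Gamma\}$ in the frame $N(L)$ and the dual isomorphism $j \mapsto N_j$ landing in the coframe $N(X_L)$, this yields exactly
\[
N_{\bigvee j_\alpha} \;=\; \bigwedge \{N_{j_\alpha} \mid \alpha \in \Gamma\},
\]
where the meet on the right is taken in $N(X_L)$.

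To make the statement concrete one may then substitute the formula from Theorem~\ref{prop: meet in NX}, giving
\[
N_{\bigvee j_\alpha} \;=\; \cl_\pi\!\left( \bigcup \left\{ F \in N(X_L) \mid F \subseteq \bigcap_\alpha N_{j_\alpha} \right\} \right).
\]
Since both ingredients (the dual isomorphism and the meet formula) have already been established, there is no substantive obstacle; the only thing to be careful about is the order-reversing convention, namely that the bottom of $N(L)$ corresponds to $X_L$ and the top to $\varnothing$, so joins in $N(L)$ indeed correspond to meets (not unions) in $N(X_L)$. The naive guess $N_{\bigvee j_\alpha} = \bigcap N_{j_\alpha}$ fails precisely because the intersection of nuclear sets need not be nuclear (as shown in the example preceding Lemma~\ref{lem: nuclear}), which is why the closure of the union of all nuclear subsets of $\bigcap N_{j_\alpha}$ is required.
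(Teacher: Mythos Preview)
Your proposal is correct and matches the paper's intent: the corollary is stated without proof in the paper precisely because it is an immediate consequence of the dual isomorphism $j \mapsto N_j$ between $N(L)$ and $N(X_L)$, which automatically carries joins to meets. Your additional remarks unpacking the meet via Theorem~\ref{prop: meet in NX} and noting why the naive intersection fails are accurate elaborations but go beyond what the paper records.
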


\section{When are $L$ and $N(L)$ spatial?} \label{sec: spatial}

Let $L$ be a frame and let $X_L$ be its Esakia space. In this section we characterize in terms of $X_L$ when $L$ and $N(L)$ are spatial, and show how to derive the results of Niefield and Rosenthal \cite{NR87} from our characterizations.
For this we require the following definition.

\begin{definition}
For a frame $L$, let $Y_L = \{ y \in X_L \mid \{y\} \in N(X_L) \}$. Thus, $Y_L$ is the set of nuclear points of $X_L$.
\end{definition}

\begin{remark}
By \cite[Lem.~5.1]{BGJ16}, $y\in Y_L$ iff $y$ is a completely prime filter of $L$. Thus, points of $Y_L$ correspond to points of $L$.
As we will see in Proposition~\ref{prop: YL = pt(L)}, this correspondence is a homeomorphism.
\end{remark}

We recall that the \emph{specialization order} of a topological space $S$ is defined by
\[
x\le y \mbox{ iff } x\in\overline{\{y\}},
\]
where $\overline{\{y\}}$ denotes the closure of $\{y\}$ in $S$. Then it is easy to see that $\down y = \overline{\{y\}}$.
We also recall from Section~\ref{sec: dualities} that $\pi$ is the original topology on $X_L$, and $\pi_u$ is the topology of open upsets.
To simplify notation, we use the same $\pi$ for the subspace topology on $Y_L$, and let
$\tau$ be the subspace topology on $Y_L$ arising from $\pi_u$. We use $\cl_\tau$ for closure in $(Y_L, \tau)$.

\begin{lemma} \label{lem: tau}
Let $L$ be a frame and $X_L$ its Esakia space.
\begin{enumerate}
\item $V \subseteq Y_L$ is open in $(Y_L,\tau)$ iff $V = U\cap Y_L$ for some clopen upset $U$ of $X_L$.
\item $F \subseteq Y_L$ is closed in $(Y_L,\tau)$ iff $F = D \cap Y_L$ for some clopen downset $D$ of $X_L$.
\item The restriction to $Y_L$ of the order $\le$ on $X_L$ is the specialization order of $(Y_L,\tau)$. Thus, $\cl_\tau(\{y\}) = \down y \cap Y_L$ for each $y \in Y_L$.
\end{enumerate}
\end{lemma}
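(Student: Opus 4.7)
The plan is to exploit two facts: by Lemma~\ref{lem: basic facts}(3) every open upset of $X_L$ is a union of clopen upsets, and by the preceding remark every $y\in Y_L$ is a completely prime filter of $L$, so $\bigvee_\alpha a_\alpha\in y$ forces $a_\alpha\in y$ for some $\alpha$. Recall also that under Priestley duality the clopen upsets of $X_L$ are precisely the sets $\varphi(a)$ for $a\in L$.

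For (1), the direction ``clopen upset gives $\tau$-open set'' is immediate from the definition of the subspace topology, since any clopen upset is $\pi_u$-open. Conversely, suppose $V$ is $\tau$-open, so $V=W\cap Y_L$ for some open upset $W$ of $X_L$. By Lemma~\ref{lem: basic facts}(3) I may write $W=\bigcup_{\alpha}\varphi(a_\alpha)$ for some family $\{a_\alpha\}\subseteq L$. Put $a=\bigvee_\alpha a_\alpha$. I claim $V=\varphi(a)\cap Y_L$, which is of the required form. The inclusion $V\subseteq\varphi(a)\cap Y_L$ is clear since $a_\alpha\le a$ gives $\varphi(a_\alpha)\subseteq\varphi(a)$. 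For the reverse, if $y\in\varphi(a)\cap Y_L$, then $a\in y$, so by complete primeness some $a_\alpha\in y$, whence $y\in\varphi(a_\alpha)\cap Y_L\subseteq V$.

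For (2), $F\subseteq Y_L$ is $\tau$-closed iff $Y_L\setminus F$ is $\tau$-open, which by (1) means $Y_L\setminus F=U\cap Y_L$ for some clopen upset $U$; then $D:=X_L\setminus U$ is a clopen downset with $F=D\cap Y_L$, and conversely. For (3), by (1), $y_1\in\cl_\tau(\{y_2\})$ iff every clopen upset $U$ with $y_1\in U$ also contains $y_2$; by the Priestley separation axiom the existence of such a $U$ missing $y_2$ is equivalent to $y_1\not\le y_2$, so the displayed condition is equivalent to $y_1\le y_2$ in $X_L$. Hence the specialization order on $(Y_L,\tau)$ is the restriction of $\le$, and $\cl_\tau(\{y\})=\down y\cap Y_L$.

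The main obstacle lives in (1): arbitrary unions of clopen upsets of $X_L$ need not themselves be clopen, so one cannot hope to match $\tau$-opens with traces of single clopen upsets without leveraging a genuine property of $Y_L$. The resolution is that complete primeness of the points of $Y_L$ lets $\varphi(\bigvee a_\alpha)\cap Y_L$ stand in for $\bigcup_\alpha\varphi(a_\alpha)\cap Y_L$, which is exactly what powers the rest of the lemma.
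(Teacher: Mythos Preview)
Your proof is correct and follows essentially the same approach as the paper: for (1) you decompose an open upset into clopen upsets via Lemma~\ref{lem: basic facts}(3), take the join in $L$, and use complete primeness of the points of $Y_L$ to see that $\varphi(\bigvee a_\alpha)\cap Y_L$ coincides with $\bigcup_\alpha\varphi(a_\alpha)\cap Y_L$; (2) is by complementation; and (3) is by Priestley separation. The paper does exactly this, with only cosmetic differences in presentation.
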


\begin{proof}
(1) The right-to-left implication follows from the definition of $\tau$. For the left-to-right implication, let $V$ be open in $(Y_L,\tau)$. By definition, $V = U \cap Y_L$ for some open upset $U$ of $X_L$. By Lemma~\ref{lem: basic facts}(3), $U$ is a union of clopen upsets. Therefore, there is $S \subseteq L$ such that $V = \bigcup \{ \varphi(s) \cap Y_L \mid s \in S\}$. Let $a = \bigvee S$.
Then $V \subseteq \varphi(a) \cap Y_L$. To see the reverse inclusion, let $y \in \varphi(a) \cap Y_L$. Since $y \in Y_L$,
we have $\down y$ is clopen. Therefore, by \cite[Lem.~5.1]{BGJ16}, $y$ is a completely prime filter of $L$. Since $a \in y$,
there is $s \in S$ with $s \in y$. Thus, $y \in \varphi(s) \cap Y_L \subseteq V$. This shows $V = \varphi(a) \cap Y_L$,
completing the proof of (1).

(2) This follows from (1).

(3) Let $\le_\tau$ be the specialization order of $(Y_L,\tau)$. Suppose $x, y \in Y_L$. If $x \le y$, then for every open upset $V$ of $X_L$,
from $x \in V$ it follows that $y \in V$. Therefore, by the definition of $\tau$,
if $U$ is an open set in $(Y_L,\tau)$, then $x \in U$ implies $y \in U$. Thus, $x \le_\tau y$. Conversely,
if $x \not\le y$, then there is a clopen upset $V$ of $X_L$ with $x \in V$ and $y \notin V$. Therefore, $V \cap Y_L$ is open in $(Y_L,\tau)$
containing $x$ and missing $y$.
Thus, $x \not\le_\tau y$, and hence $\le_\tau$ is the restriction of $\le$ to $Y_L$.
\end{proof}

\begin{proposition} \label{prop: YL = pt(L)}
The space $(Y_L, \tau)$ is $($homeomorphic to$)$ the space $\pt(L)$ of points of $L$.
\end{proposition}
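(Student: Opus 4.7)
The plan is to produce an explicit bijection $f : Y_L \to \pt(L)$ and then verify it is simultaneously continuous and open, using Lemma~\ref{lem: tau}(1) to translate opens of $(Y_L,\tau)$ into the standard basic opens $\eta(a)$ of $\pt(L)$.

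First, I would define $f(y) = y$, where on the left $y$ is a point of $Y_L \subseteq X_L$ (a prime filter of $L$ with $\{y\} \in N(X_L)$), and on the right $y$ is viewed as a completely prime filter. This is well-defined and bijective by the cited fact \cite[Lem.~5.1]{BGJ16} recorded in the remark after the definition of $Y_L$: membership in $Y_L$ is equivalent to being a completely prime filter of $L$. The inverse sends a completely prime filter $p$, which is automatically a prime filter and so a point of $X_L$, back to itself.

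Next, I would check continuity of $f$. A subbasic (in fact basic) open of $\pt(L)$ is $\eta(a) = \{p \in \pt(L) \mid a \in p\}$ for $a \in L$. Its preimage is
\[
f^{-1}(\eta(a)) = \{y \in Y_L \mid a \in y\} = \varphi(a) \cap Y_L,
\]
and since $\varphi(a)$ is a clopen upset of $X_L$, this set is open in $(Y_L,\tau)$ by Lemma~\ref{lem: tau}(1). For openness of $f$, Lemma~\ref{lem: tau}(1) says every open set of $(Y_L,\tau)$ has the form $U \cap Y_L$ for some clopen upset $U$ of $X_L$; Priestley/Esakia duality then gives $U = \varphi(a)$ for some $a \in L$, so the set equals $f^{-1}(\eta(a)) = f^{-1}(\eta(a))$, whose image under the bijection $f$ is $\eta(a)$, an open of $\pt(L)$. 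Hence $f$ is a homeomorphism.

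There is no genuine obstacle here: all the substantive work has been absorbed into Lemma~\ref{lem: tau}(1) and the characterization of nuclear points as completely prime filters. The only point requiring care is matching topologies correctly, namely recognizing that the topology $\tau$ on $Y_L$ is by definition the trace of the upset topology $\pi_u$ on $X_L$ and that open upsets are generated (via Lemma~\ref{lem: basic facts}(3) together with the unit isomorphism $\varphi$) by the clopen upsets $\varphi(a)$, which are exactly the preimages of the basic opens $\eta(a)$ under $f$.
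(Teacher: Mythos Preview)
Your proof is correct and follows essentially the same approach as the paper: identify $Y_L$ with $\pt(L)$ via \cite[Lem.~5.1]{BGJ16}, and then use Lemma~\ref{lem: tau}(1) to match the opens $\varphi(a)\cap Y_L$ of $(Y_L,\tau)$ with the basic opens $\eta(a)$ of $\pt(L)$. The paper states this identification of open sets directly rather than separately verifying continuity and openness of a bijection, but the content is the same.
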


\begin{proof}
If we view points of $L$ as completely prime filters, then $Y_L=\pt(L)$ by \cite[Lem.~5.1]{BGJ16}.
The open sets of $\pt(L)$ are the sets of the form $\{ y \in \pt(L) \mid a \in y\}$
as $a$ ranges over the elements of $L$. These are exactly $\varphi(a) \cap Y_L$. Therefore, by Lemma~\ref{lem: tau}(1), these are precisely
the open sets of $(Y_L,\tau)$. Thus, $(Y_L, \tau)$ is (homeomorphic to) the space $\pt(L)$ of points of $L$.
\end{proof}

We next characterize when $L$ is spatial (see also \cite[Sec.~2.11]{PS00}).

\begin{theorem} \label{prop: dense = spatial}
A frame $L$ is spatial iff $Y_L$ is dense in $(X_L,\pi)$.
\end{theorem}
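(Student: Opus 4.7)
The plan is to translate the spatiality condition directly into a density condition using the basis of $X_L$ and the identification of $Y_L$ with completely prime filters of $L$.

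Recall from Section~\ref{sec: frames, spaces, nuclei} that $L$ is spatial iff whenever $a,b \in L$ with $a \not\le b$, there is a point $x$ (a completely prime filter) with $a \in x$ and $b \notin x$. By the remark following the definition of $Y_L$ (invoking \cite[Lem.~5.1]{BGJ16}), the points of $L$ viewed as completely prime filters are exactly the elements of $Y_L$. Moreover, the Priestley topology on $X_L$ has as a basis the sets $\varphi(a) \setminus \varphi(b)$ for $a,b \in L$, and since $\varphi$ is an order-embedding, $\varphi(a) \setminus \varphi(b) \ne \varnothing$ iff $a \not\le b$. So the translation reads: $L$ is spatial iff every nonempty basic open of $X_L$ meets $Y_L$, which is precisely density of $Y_L$ in $(X_L, \pi)$.

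First I would prove the forward direction. Suppose $L$ is spatial and let $W$ be a nonempty open subset of $X_L$. Choose a nonempty basic open $\varphi(a) \setminus \varphi(b) \subseteq W$. Since this set is nonempty, $a \not\le b$, and by spatiality we get a completely prime filter $y$ with $a \in y$ and $b \notin y$. Then $y \in Y_L$ and $y \in \varphi(a) \setminus \varphi(b) \subseteq W$, so $W \cap Y_L \ne \varnothing$.

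For the reverse direction, suppose $Y_L$ is dense in $X_L$. Given $a \not\le b$ in $L$, the basic open $\varphi(a) \setminus \varphi(b)$ is nonempty, so by density it contains some $y \in Y_L$. Such a $y$ is a completely prime filter of $L$ with $a \in y$ and $b \notin y$, verifying the spatiality criterion.

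There is no real obstacle here; the proof is essentially bookkeeping once one has the identification $Y_L = \pt(L)$ and the explicit basis for $X_L$. The only point that requires care is checking that nonemptiness of the basic open set $\varphi(a) \setminus \varphi(b)$ really corresponds to $a \not\le b$, which follows because $\varphi : L \to \U(X_L)$ is a bounded lattice isomorphism, hence reflects the order.
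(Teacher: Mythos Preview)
Your proof is correct and follows essentially the same approach as the paper: both directions use the basis $\{\varphi(a)\setminus\varphi(b)\}$ for $(X_L,\pi)$, the equivalence $\varphi(a)\setminus\varphi(b)\ne\varnothing \iff a\not\le b$, and the identification of $Y_L$ with the completely prime filters of $L$ via \cite[Lem.~5.1]{BGJ16}. The paper's argument is line-for-line the same as yours.
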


\begin{proof}
Suppose that $L$ is spatial. Let $U$ be a nonempty open subset of $(X_L, \pi)$. Then there are $a,b \in L$ with
$\varnothing \ne \varphi(a) \setminus \varphi(b) \subseteq U$. Therefore, $a \not\le b$. Since $L$ is spatial, there is $y \in Y_L$ with
$a \in y$ and $b \notin y$. Then $y \in \varphi(a) \setminus \varphi(b)$, so $Y_L \cap U \ne \varnothing$. Thus,
$Y_L$ is dense in $(X_L,\pi)$.

Conversely, suppose that $Y_L$ is dense in $(X_L,\pi)$. Let $a, b \in L$ with
$a \not\le b$. Then $\varphi(a) \setminus \varphi(b)$ is a nonempty open subset of $X_L$. Since $Y_L$ is dense in
$(X_L,\pi)$, there is $y\in Y_L \cap (\varphi(a) \setminus \varphi(b))$.
Therefore, $a \in y$ and $b \notin y$. Thus, $L$ is spatial.
\end{proof}

We now turn to the question of spatiality of $N(L)$. For a topological space $S$, we let $\F(S)$ be the closed sets of $S$.
If we need to specify the topology $\pi$, we write $\F_\pi(S)$.

\begin{lemma} \label{lem: gamma}
For a frame $L$, the map $\gamma : N(X_L) \to \F_\pi(Y_L)$, defined by $\gamma(N) = N \cap Y_L$, is an onto coframe homomorphism.
\end{lemma}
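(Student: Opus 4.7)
The plan is to verify four things about $\gamma$: well-definedness, preservation of finite joins (and the bottom), preservation of arbitrary meets, and surjectivity. Recall that $N(X_L)$ is a coframe with bottom $\varnothing$, top $X_L$, binary joins given by union, and arbitrary meets computed by the formula of Theorem~\ref{prop: meet in NX}; while $\F_\pi(Y_L)$ is a coframe with finite joins given by union and arbitrary meets given by intersection. Well-definedness is immediate since the $\pi$-topology on $Y_L$ is the subspace topology, so $N \cap Y_L$ is $\pi$-closed in $Y_L$ whenever $N$ is closed in $X_L$. Preservation of $\varnothing$, of $X_L$ (which maps to $Y_L$), and of finite unions is straightforward from the definition.

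The main point is preservation of arbitrary meets. Given a family $\{N_\alpha \mid \alpha \in \Gamma\}$ in $N(X_L)$, set $N = \bigcap N_\alpha$ and $M = \bigcup\{F \in N(X_L) \mid F \subseteq N\}$. By Theorem~\ref{prop: meet in NX}, $\bigwedge N_\alpha = \cl_\pi(M)$, and I must show
\[
\cl_\pi(M) \cap Y_L \;=\; \bigcap_{\alpha}\bigl(N_\alpha \cap Y_L\bigr) \;=\; N \cap Y_L.
\]
The inclusion $\cl_\pi(M) \cap Y_L \subseteq N \cap Y_L$ is clear: $M \subseteq N$, and $N$ is closed, so $\cl_\pi(M) \subseteq N$. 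For the reverse inclusion I use the defining property of $Y_L$: if $y \in N \cap Y_L$, then $\{y\} \in N(X_L)$ and $\{y\} \subseteq N$, so $\{y\} \subseteq M$, and hence $y \in M \subseteq \cl_\pi(M)$. This is the one step where nuclearity of the singleton, i.e., membership in $Y_L$, is essential.

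For surjectivity, let $C$ be a $\pi$-closed subset of $Y_L$; by the definition of subspace topology, $C = F \cap Y_L$ for some closed $F \subseteq X_L$. Since $C \subseteq Y_L$, every singleton $\{y\}$ with $y \in C$ is nuclear, so Lemma~\ref{lem: nuclear} gives that $\cl_\pi(C) = \cl_\pi\bigl(\bigcup_{y \in C}\{y\}\bigr)$ lies in $N(X_L)$. Moreover $\cl_\pi(C) \subseteq F$, so
\[
\gamma\bigl(\cl_\pi(C)\bigr) \;=\; \cl_\pi(C) \cap Y_L \;\subseteq\; F \cap Y_L \;=\; C,
\]
while the reverse inclusion $C \subseteq \cl_\pi(C) \cap Y_L$ is trivial. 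Thus $\gamma(\cl_\pi(C)) = C$, and $\gamma$ is onto.

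The only conceptually non-routine step is the arbitrary-meets preservation, and even there the argument is essentially an application of Theorem~\ref{prop: meet in NX} combined with the observation that points of $Y_L$ are precisely those whose singletons are nuclear — so no significant obstacle arises provided one has the formula for meets in $N(X_L)$ firmly in hand.
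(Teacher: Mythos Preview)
Your proof is correct, and the verification of finite joins and arbitrary meets is essentially identical to the paper's: both reduce meet-preservation to Theorem~\ref{prop: meet in NX} together with the observation that any $y\in Y_L$ lying in $\bigcap N_\alpha$ contributes the nuclear singleton $\{y\}$ to the meet.

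The surjectivity arguments differ slightly. The paper writes an arbitrary $\pi$-closed $F\subseteq X_L$ as the intersection of the clopen sets containing it (using that $X_L$ is Stone), invokes Lemma~\ref{lem: basic nuclear facts}(1) to see that clopens are nuclear, and then applies the just-proved meet preservation to conclude $F\cap Y_L=\gamma\bigl(\bigwedge\{U\mid F\subseteq U,\ U\text{ clopen}\}\bigr)$. You instead take $\cl_\pi(C)$ directly as a preimage, using Lemma~\ref{lem: nuclear} applied to the nuclear singletons $\{y\}$ for $y\in C$. Your route is a bit more constructive---it exhibits the smallest nuclear preimage of $C$---while the paper's route avoids a second appeal to Lemma~\ref{lem: nuclear} by recycling the meet-preservation result. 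Both are short and valid.
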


\begin{proof}
Clearly $\gamma$ preserves finite joins since finite joins in both $N(X_L)$ and $\F_\pi(Y_L)$ are finite unions.
To see that $\gamma$ preserves arbitrary meets, let $\{N_\alpha\}$ be a family of nuclear sets of $X_L$. If $N$ is their meet,
then $N \subseteq \bigcap N_\alpha$, so
\[
\gamma(N) = N \cap Y_L \subseteq \left(\bigcap N_\alpha \right) \cap Y_L = \bigcap (N_\alpha \cap Y_L) = \bigcap \gamma(N_\alpha).
\]
For the reverse inclusion, let
$y \in \bigcap \gamma(N_\alpha)$. Then $y \in \bigcap (N_\alpha \cap Y_L)$. So $\{y\} \in N(X_L)$ and $y \in N_\alpha$ for each $\alpha$.
Therefore, $y \in \bigwedge N_\alpha$ by Theorem~\ref{prop: meet in NX}. Thus, $\gamma(\bigwedge N_\alpha) = \bigcap \gamma(N_\alpha)$,
and hence $\gamma$ is a coframe homomorphism.
To see that $\gamma$ is onto, let $F\in \F_\pi(Y_L)$.
Then $F = D \cap Y_L$ for some $D\in \F_\pi(X_L)$.
Since $(X_L,\pi)$ is a Stone space, $D$ is the intersection of clopen sets $U$ containing it.
Because clopen sets are nuclear (see Lemma~\ref{lem: basic nuclear facts}(1)), we have
\[
F = D \cap Y_L = \bigcap \{ U \cap Y_L \mid D \subseteq U \} = \bigcap \{ \gamma(U) \mid D \subseteq U \}
= \gamma\left(\bigwedge \{U \mid D\subseteq U\} \right).
\]
Thus, $\gamma$ is onto.
\end{proof}

We will prove that $N(L)$ is spatial iff $\gamma$ is 1-1. For this we require the following lemma.

\begin{lemma} \label{lem: join prime}
The join-prime elements of $N(X_L)$ are precisely the singletons $\{y\}$ with $y \in Y_L$.
\end{lemma}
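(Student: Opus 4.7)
The plan is to prove both directions directly, using the facts that finite joins in $N(X_L)$ are set-theoretic unions and that clopen-indexed intersections preserve nuclearity.

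For the forward direction, fix $y \in Y_L$, so $\{y\} \in N(X_L)$. Clearly $\{y\} \ne \varnothing$, which is the bottom of $N(X_L)$. From the identity $N_{j \wedge k} = N_j \cup N_k$ noted in Section~\ref{sec: nuclear}, and the fact that the isomorphism $N(L) \cong N(X_L)$ is order-reversing, finite joins in $N(X_L)$ are simply unions. Thus if $\{y\} \subseteq N_1 \vee N_2 = N_1 \cup N_2$, then $y \in N_1$ or $y \in N_2$, giving $\{y\} \subseteq N_1$ or $\{y\} \subseteq N_2$. Hence $\{y\}$ is join-prime.

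For the converse, suppose $N \in N(X_L)$ is join-prime. Since $N \ne \varnothing$, it suffices to show $|N| = 1$, because then $N = \{y\}$ is nuclear, so $y \in Y_L$. Assume for contradiction that $N$ contains distinct points $x, y$. Since $X_L$ is a Priestley space, either $x \not\le y$ or $y \not\le x$; without loss of generality assume $x \not\le y$. Apply the Priestley separation axiom to obtain a clopen upset $U$ of $X_L$ with $x \in U$ and $y \notin U$. Both $U$ and $X_L \setminus U$ are clopen, so by Lemma~\ref{lem: basic nuclear facts}(3) the sets $N \cap U$ and $N \cap (X_L \setminus U)$ are nuclear. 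Their union equals $N$, which gives the inequality
\[
N \subseteq (N \cap U) \vee (N \cap (X_L \setminus U))
\]
in $N(X_L)$. However $y \in N$ witnesses $N \not\subseteq N \cap U$, and $x \in N$ witnesses $N \not\subseteq N \cap (X_L \setminus U)$. This contradicts join-primality of $N$, so $N$ must be a singleton.

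The only step that required any thought was identifying which lattice operation in $N(X_L)$ corresponds to union; once this is pinned down via the dual isomorphism and the identity $N_{j\wedge k} = N_j \cup N_k$, the Priestley separation axiom together with Lemma~\ref{lem: basic nuclear facts}(3) makes the argument essentially mechanical. No delicate topological computation is needed because the splitting of $N$ is by a clopen set, which automatically keeps both pieces inside $N(X_L)$.
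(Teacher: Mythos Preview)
Your proof is correct and follows essentially the same approach as the paper's. The only cosmetic differences are that the paper separates $x$ and $y$ by an arbitrary clopen set (using that $X_L$ is a Stone space) rather than invoking the Priestley separation axiom, and it uses $U$ and $X_L\setminus U$ themselves as the two nuclear sets (via Lemma~\ref{lem: basic nuclear facts}(1)) rather than intersecting them with $N$; this avoids the appeal to Lemma~\ref{lem: basic nuclear facts}(3), but the argument is otherwise identical.
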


\begin{proof}
Since binary join in $N(X_L)$ is union, it is clear that $\{y\}$ is join-prime in $N(X_L)$ for each $y\in Y_L$.
Conversely, suppose that $N$ is join-prime in $N(X_L)$. If there are $x \ne y$ in $N$, then there is a clopen set $U$ containing
$x$ but not $y$. Since $U \cup (X_L\setminus U)=X_L$, we see that $N \subseteq U \cup (X_L \setminus U)$. But $N$ is not contained in
either. This is a contradiction to $N$ being join-prime in $N(X_L)$ since $U$ and $X_L \setminus U$ are clopen, hence belong to
$N(X_L)$. Thus, $N$ is a singleton $\{y\}$ with $y\in Y_L$.
\end{proof}

Lemma~\ref{lem: join prime} has several useful consequences. First we show that there is a bijection between the points of $L$ and
the points of $N(L)$.


\begin{proposition} \label{prop: YL = pt(NL)}
There is a bijection between $\pt(L)$ and $\pt(N(L))$.
\end{proposition}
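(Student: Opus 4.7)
The plan is to chain together three bijections:
\[
\pt(L) \longleftrightarrow Y_L \longleftrightarrow \{\text{join-prime elements of } N(X_L)\} \longleftrightarrow \pt(N(L)),
\]
all of which are either stated in the paper already or follow from a short identification.

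First I would invoke the standard correspondence, recalled in Section~\ref{sec: frames, spaces, nuclei}, between points of an arbitrary frame $K$ and its meet-prime elements (elements $p\ne 1$ such that $a\wedge b\le p$ forces $a\le p$ or $b\le p$). Applied to $K=N(L)$, this identifies $\pt(N(L))$ with the set of meet-primes of $N(L)$.

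Next I would transport this description across the dual isomorphism $N(L)\cong N(X_L)^{\func{op}}$ of Theorem~30 of \cite{BG07}. Because that dual isomorphism turns finite meets into finite unions and sends the top of $N(L)$ to the bottom $\varnothing$ of $N(X_L)$, it carries the meet-primes of $N(L)$ bijectively onto the join-primes of $N(X_L)$. By Lemma~\ref{lem: join prime}, these join-primes are precisely the singletons $\{y\}$ with $y\in Y_L$, yielding a bijection $\pt(N(L))\leftrightarrow Y_L$. Finally, Proposition~\ref{prop: YL = pt(L)} supplies a homeomorphism $(Y_L,\tau)\cong\pt(L)$, and composing the three bijections gives the claim.

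I do not expect a serious obstacle, since the substantive work is already done in Lemma~\ref{lem: join prime}. The only point I would verify carefully is the bookkeeping at the extremes: that the dual isomorphism identifies the top of $N(L)$ with $\varnothing\in N(X_L)$ (so that ``$p\ne 1$'' on one side matches ``$N\ne\varnothing$'' on the other), and that the join-primes singled out by Lemma~\ref{lem: join prime} are already non-empty by construction, so the bijection is clean.
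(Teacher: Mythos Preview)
Your proposal is correct and follows essentially the same argument as the paper: both chain the bijections $\pt(L)\leftrightarrow Y_L$ (Proposition~\ref{prop: YL = pt(L)}), $Y_L\leftrightarrow\{\text{join-primes of }N(X_L)\}$ (Lemma~\ref{lem: join prime}), and $\{\text{join-primes of }N(X_L)\}\leftrightarrow\{\text{meet-primes of }N(L)\}\leftrightarrow\pt(N(L))$ via the dual isomorphism and the standard points/meet-primes correspondence. The extra bookkeeping you flag about tops, bottoms, and nonemptiness is exactly the routine check the paper leaves implicit.
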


\begin{proof}
Let $Z_L$ be the set of join-prime elements of $N(X_L)$. By Lemma~\ref{lem: join prime}, $Z_L$ is in 1-1 correspondence with $Y_L$.
Since $N(L)$ is dually isomorphic to $N(X_L)$, the join-prime elements of $N(X_L)$ are in 1-1 correspondence with the meet-prime
elements of $N(L)$. But the meet-prime elements of $N(L)$ are in 1-1 correspondence with the points of $N(L)$. Therefore, there is
a 1-1 correspondence between $Y_L$ and $\pt(N(L))$. Thus, by Proposition~\ref{prop: YL = pt(L)}, there is a 1-1 correspondence between $\pt(L)$ and $\pt(N(L))$.
%
\end{proof}



We next derive the following well-known theorem.

\begin{theorem} \cite[Cor.~3.5]{NR87}
If $N(L)$ is spatial, then $L$ is spatial.
\end{theorem}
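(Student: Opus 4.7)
The plan is to argue by contrapositive and invoke the density criterion from Theorem~\ref{prop: dense = spatial}: assuming $L$ is not spatial, so that $Y_L$ fails to be dense in $X_L$, I will produce a nucleus whose existence contradicts the spatiality of $N(L)$.

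First, non-density of $Y_L$ yields a nonempty basic clopen $U=\varphi(a)\setminus\varphi(b)$ of $X_L$ with $U\cap Y_L=\varnothing$. By Lemma~\ref{lem: basic nuclear facts}(1) this $U$ is a nuclear subset, so $U=N_j$ for a unique nucleus $j\in N(L)$. Since the top $\top$ of $N(L)$ corresponds to $\varnothing\in N(X_L)$ under the dual isomorphism, and $U\ne\varnothing$, we have $\top\not\le j$ in $N(L)$.

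Next, I would feed the inequality $\top\not\le j$ into the spatiality of $N(L)$: if $N(L)$ were spatial, there would be a point $p$ of $N(L)$ with $j\le p$ and $\top\not\le p$. Under the dual isomorphism $N(L)\cong N(X_L)^{\mathrm{op}}$, such a $p$ corresponds to a completely join-prime—hence binary join-prime—element of $N(X_L)$, which by Lemma~\ref{lem: join prime} must be a singleton $\{y\}$ with $y\in Y_L$. The relation $j\le p$ translates to $\{y\}=N_p\subseteq N_j=U$, so $y\in U\cap Y_L$, contradicting the choice of $U$.

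The only delicate point, really bookkeeping rather than a genuine obstacle, is matching ``point of the frame $N(L)$'' with ``singleton $\{y\}\in N(X_L)$ coming from $y\in Y_L$''. This uses that a point of a frame is a completely meet-prime element, which under the order-reversing isomorphism becomes a completely join-prime (in particular, binary join-prime) nuclear subset of $X_L$, at which juncture Lemma~\ref{lem: join prime} identifies it as a singleton from $Y_L$. Once that identification is in place, the argument reduces to the one-line contradiction above.
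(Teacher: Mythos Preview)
Your proof is correct and follows essentially the same approach as the paper's: both hinge on Lemma~\ref{lem: join prime} together with the fact that clopens are nuclear, the only difference being that you argue by contraposition via the density criterion of Theorem~\ref{prop: dense = spatial} while the paper argues directly from $a\not\le b$ to a separating $y\in Y_L$. One small terminological slip: points of a frame correspond to meet-prime (not \emph{completely} meet-prime) elements, but since downstream you only invoke binary join-primality to apply Lemma~\ref{lem: join prime}, the argument is unaffected.
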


\begin{proof}
Suppose that $N(L)$ is spatial. Since points are in 1-1 correspondence with meet-prime elements, each element of $N(L)$ is a meet of
meet-prime elements of $N(L)$. As $N(L)$ is dually isomorphic to $N(X_L)$, every element of $N(X_L)$ is a join of join-prime elements
of $N(X_L)$. To see that $L$ is spatial, let $a, b \in L$ with $a \not\le b$. Then $\varphi(a) \not\subsetneq \varphi(b)$. Since these
sets are clopen, they are in $N(X_L)$. Therefore, there is a join-prime element $P$ of $N(X_L)$ such that $P\subseteq \varphi(a)$ and
$P \not\subseteq \varphi(b)$. By Lemma~\ref{lem: join prime}, there is $y \in Y_L$ with $y \in \varphi(a)$ and $y \notin \varphi(b)$.
Thus, $a \in y$ and $b \notin y$. Since $y \in \pt(L)$, we conclude that $L$ is spatial.
\end{proof}

Finally, we use Lemmas~\ref{lem: gamma} and~\ref{lem: join prime} to obtain the following characterization of spatiality of $N(L)$.

\begin{theorem} \label{thm: spatial}
For a frame $L$, the following conditions are equivalent.
\begin{enumerate}
\item The frame $N(L)$ is spatial.
\item If $N \in N(X_L)$ is nonempty, then so is $N \cap Y_L$.
\item $\gamma:N(X_L)\to\F_\pi(Y_L)$ is 1-1.
\item $\gamma:N(X_L)\to\F_\pi(Y_L)$ is an isomorphism.
\item $N(L)$ is isomorphic to $\Op_\pi(Y_L)$.
\end{enumerate}
\end{theorem}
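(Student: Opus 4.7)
The plan is to prove the cycle $(1) \Rightarrow (2) \Rightarrow (3) \Rightarrow (4) \Rightarrow (5) \Rightarrow (1)$, leaning on the dual isomorphism between $N(L)$ and $N(X_L)$ together with Lemmas~\ref{lem: gamma} and~\ref{lem: join prime}.

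For $(1) \Rightarrow (2)$, recall that spatiality of $N(L)$ is equivalent to every element being a meet of meet-prime elements; transporting this across the dual isomorphism, it becomes the statement that every element of $N(X_L)$ is a join of its join-prime predecessors. By Lemma~\ref{lem: join prime}, these join-primes are precisely the singletons $\{y\}$ with $y \in Y_L$, so any nonempty $N \in N(X_L)$ must contain at least one such singleton, which yields $N \cap Y_L \ne \varnothing$.

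For $(2) \Rightarrow (3)$, suppose $N_1 \ne N_2$ in $N(X_L)$, and (without loss of generality) pick $x \in N_1 \setminus N_2$. Since $X_L$ is a Stone space (being a Priestley space) and $N_2$ is closed, I would separate $x$ from $N_2$ by a clopen set $V$ with $x \in V$ and $V \cap N_2 = \varnothing$. Lemma~\ref{lem: basic nuclear facts}(3) then guarantees that $V \cap N_1$ is a nonempty nuclear subset of $X_L$, so by (2) it meets $Y_L$. Any resulting $y \in V \cap N_1 \cap Y_L$ lies in $\gamma(N_1) \setminus \gamma(N_2)$, proving injectivity of $\gamma$.

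The remaining implications are essentially formal. $(3) \Rightarrow (4)$ is immediate since $\gamma$ is already an onto coframe homomorphism by Lemma~\ref{lem: gamma}, and a bijective coframe homomorphism is automatically a coframe isomorphism. For $(4) \Rightarrow (5)$, I would chain
\[
N(L) \;\cong\; N(X_L)^{\mathrm{op}} \;\cong\; \F_\pi(Y_L)^{\mathrm{op}} \;\cong\; \Op_\pi(Y_L),
\]
where the last isomorphism is set-theoretic complementation. Finally, $(5) \Rightarrow (1)$ holds because $\Op S$ is a spatial frame for every topological space $S$. The only step needing genuine work is $(2) \Rightarrow (3)$, where the key is to upgrade a set-theoretic discrepancy between two nuclear sets into one witnessed by a nuclear \emph{point}; the clopen-separation trick combined with Lemma~\ref{lem: basic nuclear facts}(3) is precisely what makes hypothesis (2) applicable.
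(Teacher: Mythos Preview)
Your proof is correct and follows essentially the same route as the paper: the same cycle of implications, the same use of Lemma~\ref{lem: join prime} for $(1)\Rightarrow(2)$, the same clopen-separation argument combined with Lemma~\ref{lem: basic nuclear facts}(3) for $(2)\Rightarrow(3)$, and the same formal chain for the remaining steps. The only cosmetic difference is that in $(2)\Rightarrow(3)$ the paper phrases the argument as order-reflection ($N\not\subseteq M \Rightarrow \gamma(N)\not\subseteq\gamma(M)$) rather than injectivity directly, but the underlying computation is identical.
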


\begin{proof}
(1) $\Rightarrow$ (2). Suppose $N \ne \varnothing$. Since $N(L)$ is spatial, each element of $N(L)$ is a meet of meet-prime elements,
and hence each element of $N(X_L)$ is a join of join-prime elements. Therefore, $N \ne \varnothing$ implies there is a join-prime
$P\in N(X_L)$ with $P \subseteq N$. But then $N \cap Y_L \ne \varnothing$ by Lemma~\ref{lem: join prime}.

(2) $\Rightarrow$ (3). It is sufficient to show that for $N,M \in N(X_L)$ with $N \not\subseteq M$, there is $y \in Y_L$ with
$y \in N \setminus M$. Suppose that $N \not\subseteq M$. Since $X_L$ is a Stone space and $M$ is closed, there is
a clopen set $U$ with $M \subseteq U$ and $N \not\subseteq U$. Therefore, $N \setminus U$ is a nonempty nuclear set by
Lemma~\ref{lem: basic nuclear facts}(3). By (2), there is $y \in Y_L$ with $y \in N \setminus U$. Thus, $y \in N \setminus M$.

(3) $\Rightarrow$ (4). By Lemma~\ref{lem: gamma}, $\gamma:N(X_L)\to\F_\pi(Y_L)$ is an onto coframe homomorphism. By (3), $\gamma$
is 1-1. Thus, $\gamma$ is an isomorphism.

(4) $\Rightarrow$ (5). This is obvious since $N(L)$ is dually isomorphic to $N(X_L)$ and $\Op_\pi(Y_L)$ is dually isomorphic to
$\F_\pi(Y_L)$.

(5) $\Rightarrow$ (1). This is clear.
\end{proof}

\begin{remark}
When both $L$ and $N(L)$ are spatial, then it is a consequence of Proposition~\ref{prop: YL = pt(L)} and Theorem~\ref{thm: spatial} that
$L$ is isomorphic to the frame of opens of $(Y_L,\tau)$, while $N(L)$ is isomorphic to the frame of opens of $(Y_L,\pi)$.
\end{remark}

We conclude this section by deriving the characterization of when $N(L)$ is spatial given in \cite{NR87} in terms of essential primes.
Let $L$ be a frame and $a \in L$. We recall that a meet-prime element $p\ge a$ is a \emph{minimal prime} (with respect to $a$) if
$p$ is minimal among meet-primes $q\ge a$. Let $\func{Min}(a)$ be the set of all minimal primes (with respect to $a$). If
$a = \bigwedge \func{Min}(a)$, then $p\in \func{Min}(a)$ is an \emph{essential prime} provided
$a \ne \bigwedge \{ q \in \func{Min}(a) \mid p \ne q\}$.

We next characterize minimal primes and essential primes in terms of $X_L$ and $Y_L$. For this we require the following lemma.
\begin{lemma} \label{lem: meet prime}
Let $L$ be a frame, $X_L$ its Esakia space, $a \in L$, and $S \subseteq L$.
\begin{enumerate}
\item $a$ is meet prime iff $\varphi(a)=X_L\setminus\down y$ for some $y\in Y_L$.
\item $a = \bigwedge S$ iff $\varphi(a) = X_L \setminus \down \left( X_L \setminus \Int_\pi\left(\bigcap\{ \varphi(s) \mid s\in S\}\right) \right)$.
\item $a = \bigwedge S$ iff $X_L\setminus\varphi(a) = \down \cl_\pi \bigcup \{ X_L\setminus\varphi(s) \mid s\in S\}$.
\end{enumerate}
\end{lemma}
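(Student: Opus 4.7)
For part (1), the plan is to exploit the bijection (noted just before Proposition~\ref{prop: YL = pt(L)}, via \cite[Lem.~5.1]{BGJ16}) between nuclear points of $X_L$ and completely prime filters of $L$, which in turn are in bijection with meet-prime elements of $L$. For the forward direction, given a meet-prime $a$, I would set $y_a := \{x \in L \mid x \not\le a\}$; the meet-prime property ensures $y_a$ is a completely prime filter, hence $y_a \in Y_L$. Since $\le$ on $X_L$ is set-theoretic inclusion, unfolding gives $z \notin \down y_a$ iff some $b \in z$ satisfies $b \le a$, which by upward-closedness of $z$ is equivalent to $a \in z$; thus $\varphi(a) = X_L \setminus \down y_a$. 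For the converse, given $\varphi(a) = X_L \setminus \down y$ with $y \in Y_L$, the meet-prime $a'$ corresponding to $y$ via the forward direction satisfies $\varphi(a') = X_L \setminus \down y = \varphi(a)$, so injectivity of $\varphi$ forces $a = a'$, showing that $a$ is meet-prime.

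For (2) and (3), I would first observe that they are equivalent: complementing (3) and using $\cl_\pi(X_L \setminus T) = X_L \setminus \Int_\pi T$ with $T = \bigcap\{\varphi(s) \mid s\in S\}$ recovers (2). Hence it suffices to prove (3). Since $\varphi : L \to \U(X_L)$ is a frame isomorphism onto the frame of clopen upsets, the statement $a = \bigwedge S$ is equivalent to $\varphi(a)$ being the meet of $\{\varphi(s) \mid s \in S\}$ in $\U(X_L)$, which is characterized as the largest clopen upset $W$ contained in $\bigcap \varphi(s)$. Writing $V := \bigcup\{X_L \setminus \varphi(s) \mid s \in S\}$, the goal reduces to the set equality $X_L \setminus W = \down \cl_\pi V$.

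One inclusion is easy: since $W \subseteq \bigcap \varphi(s)$, we have $V \subseteq X_L \setminus W$, and $X_L \setminus W$ is a closed downset, so it contains $\cl_\pi V$ and then $\down \cl_\pi V$. For the reverse inclusion, I would argue via the complement. The set $X_L \setminus \down \cl_\pi V$ is an open upset contained in $X_L \setminus V = \bigcap \varphi(s)$. By Lemma~\ref{lem: basic facts}(3), every open upset is a union of clopen upsets; each clopen upset appearing in such a decomposition is a clopen upset contained in $\bigcap \varphi(s)$, so by the maximality of $W$ it lies in $W$. Hence $X_L \setminus \down \cl_\pi V \subseteq W$, giving $X_L \setminus W \subseteq \down \cl_\pi V$.

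The main obstacle is this last step: passing from an open-upset containment in $\bigcap \varphi(s)$ to a clopen-upset containment in $W$. This is where both the Esakia structure (via the clopen-upset decomposition of Lemma~\ref{lem: basic facts}(3)) and the identification of $\U(X_L)$ with the frame $L$ genuinely enter, the latter being what makes the maximality characterization of $W$ available in the first place.
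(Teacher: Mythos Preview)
Your proposal is correct. The paper's own proof simply defers to external references: part~(1) is obtained by dualizing \cite[Thm.~2.7(1)]{BB08}, part~(2) is \cite[Lem.~2.3(3)]{BB08}, and part~(3) is then immediate from~(2). Your argument, by contrast, is self-contained. For~(1), your explicit unpacking of the bijection between meet-primes and completely prime filters (and the verification that $z \notin \down y_a$ iff $a \in z$) is exactly the content behind the cited dualization. For~(2) and~(3), rather than quoting the formula from \cite{BB08}, you characterize $\varphi(\bigwedge S)$ as the largest clopen upset inside $\bigcap \varphi(s)$ and then use Lemma~\ref{lem: basic facts}(3) to show that the open upset $X_L \setminus \down \cl_\pi V$ is already contained in that largest clopen upset. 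This is a clean, elementary route that avoids the external dependency; the paper's approach is shorter on the page but pushes the work into the reference.
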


\begin{proof}
(1). A characterization of join primes of $L$ is given in \cite[Thm.~2.7(1)]{BB08}. Dualizing the proof yields the result.

(2). See \cite[Lem.~2.3(3)]{BB08}.

(3). This is immediate from (2).
\end{proof}

In order to distinguish between minimal primes and essential primes of $L$, for a clopen downset $D$ of $X_L$, we will look at $\max(D) \cap Y_L$ and $\max(D \cap Y_L)$. It is clear that $\max(D) \cap Y_L \subseteq \max(D \cap Y_L)$. The reverse inclusion does not always hold, as the following example shows.

\begin{example}
Let $\beta(\mathbb{N})$ be the Stone-\v{C}ech compactification of the discrete space $\mathbb{N}$, and let $X$ be the disjoint union of $\beta (\mathbb{N})$ and the two-element discrete space $\{x,y\}$. Let $\mathbb{N}^* = \beta (\mathbb{N}) \setminus \mathbb{N}$ and define $\le$ to be the least partial order on $X$ satisfying $z \le y$ for each $z \in X$ and $x \le z$ iff $z = x$ or $z \in \mathbb{N}^*$.
\begin{center}
\begin{tikzpicture}
\draw [fill] (0,0) circle[radius=.07];
\draw [fill] (1,0) circle[radius=.07];
\draw [fill] (2,0) circle[radius=.07];
\node [below] at (0,0) {0};
\node [below] at (1,0) {1};
\node [below] at (2,0) {2};
\node [above] at (3.5, 2) {$y$};
\node [below] at (6,-2) {$x$};
\node [right] at (7,0) {$\mathbb{N}^*$};
\draw [fill] (3,0) circle[radius=.02];
\draw [fill] (3.5,0) circle[radius=.02];
\draw [fill] (4,0) circle[radius=.02];
\draw (6,-2) -- (5,0) -- (7,0) -- (6,-2);
\draw [fill] (3.5, 2) circle[radius=.07];
\draw [fill] (6,-2) circle[radius=.07];
\draw (0,0) -- (3.5,2);
\draw (1,0) -- (3.5, 2);
\draw (2,0) -- (3.5, 2);
\draw (5,0) -- (3.5, 2);
\draw (7,0) -- (3.5, 2);
\draw [fill] (5.0, .6) circle[radius=.02];
\draw [fill] (5.25,.6) circle[radius=.02];
\draw [fill] (5.5,.6) circle[radius=.02];

\draw [fill] (5.75, -.75) circle[radius=.02];
\draw [fill] (6,-.75) circle[radius=.02];
\draw [fill] (6.25,-.75) circle[radius=.02];
\end{tikzpicture}
\end{center}
Since $\beta(\mathbb{N})$ is extremally disconnected, it is clear that so is $X$. It is also easy to see that $X$ is an Esakia space. Therefore, $X$ is an extremally order-disconnected Esakia space. Thus, the clopen upsets of $X$ form a frame $L$, and we identify $X$ with $X_L$. Then $Y_L$ is identified with $\mathbb{N} \cup \{x,y\}$.

Let $D = X \setminus \{y\}$. Then $D$ is a clopen downset, $\max(D) = \beta(\mathbb{N})$, $\max(D) \cap Y_L = \mathbb{N}$, and $\max(D \cap Y_L) = \mathbb{N} \cup \{x\}$. Therefore, $\max(D) \cap Y_L$ is a proper subset of $\max(D \cap Y_L)$.
\end{example}

\begin{proposition} \label{prop: essential}
Let $L$ be a frame, $X_L$ its Esakia space, $a \in L$, and $p\ge a$ a meet-prime element of $L$.
\begin{enumerate}
\item $p$ is a minimal prime iff $\varphi(p)=X_L\setminus\down y$ for some $y \in \max[(X_L \setminus \varphi(a)) \cap Y_L]$.
\item $p$ is an essential prime iff $\varphi(p)=X_L\setminus\down y$ for some $y \in \max(X_L \setminus \varphi(a)) \cap Y_L$.
\end{enumerate}
\end{proposition}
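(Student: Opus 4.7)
The plan is to parametrize meet-primes $p \ge a$ by points of $Y_L$ lying in $X_L \setminus \varphi(a)$ via Lemma~\ref{lem: meet prime}(1), derive (1) from an order-reversing bijection, and derive (2) by translating the relation $a = \bigwedge \func{Min}(a)$ and its variant with one prime removed into a closure condition through Lemma~\ref{lem: meet prime}(3).

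For (1): any meet-prime $p$ has $\varphi(p) = X_L \setminus \down y_p$ for a unique $y_p \in Y_L$. The condition $p \ge a$ is $\varphi(a) \cap \down y_p = \varnothing$, and since $\varphi(a)$ is an upset this reduces to $y_p \in X_L \setminus \varphi(a)$. For two such primes one has $p_y \le p_z$ iff $z \le y$, so $p_y$ is a minimal prime iff $y$ is maximal in $(X_L \setminus \varphi(a)) \cap Y_L$. Set $M := \max[(X_L \setminus \varphi(a)) \cap Y_L]$, so the map $y \mapsto p_y$ is a bijection $M \to \func{Min}(a)$.

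For (2), Lemma~\ref{lem: meet prime}(3) gives that for $T \subseteq M$, $a = \bigwedge \{p_y : y \in T\}$ iff $X_L \setminus \varphi(a) = \down \cl_\pi \bigcup_{y \in T} \down y$. Using that the downset of a closed set is closed in an Esakia space, the right side simplifies to $\down \cl_\pi T$. Taking $T = M$ encodes $a = \bigwedge \func{Min}(a)$; taking $T = M \setminus \{y_0\}$, the inclusion $\subseteq$ is automatic, while the reverse fails precisely when $y_0 \notin \down \cl_\pi(M \setminus \{y_0\})$. Hence $p_{y_0}$ is essential iff $y_0 \notin \down \cl_\pi(M \setminus \{y_0\})$, and (2) reduces to matching this with $y_0 \in \max(X_L \setminus \varphi(a))$.

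For the forward direction, assume $y_0 \in \max(X_L \setminus \varphi(a))$. Since $y_0 \in Y_L$, $\down y_0$ is clopen; distinct maxima of $(X_L \setminus \varphi(a)) \cap Y_L$ are pairwise incomparable, so $M \setminus \{y_0\} \subseteq X_L \setminus \down y_0$, whence $\cl_\pi(M \setminus \{y_0\}) \subseteq X_L \setminus \down y_0$. Combined with $\up y_0 \cap (X_L \setminus \varphi(a)) = \{y_0\}$ (by maximality), this forces $\up y_0 \cap \cl_\pi(M \setminus \{y_0\}) = \varnothing$, giving $y_0 \notin \down \cl_\pi(M \setminus \{y_0\})$. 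For the converse, suppose $y_0 \in M$ but $y_0 \notin \max(X_L \setminus \varphi(a))$; applying Lemma~\ref{lem: basic facts}(4) to the closed set $\up y_0 \cap (X_L \setminus \varphi(a))$ yields $z' > y_0$ maximal in this set, and $z'$ is in fact maximal in all of $X_L \setminus \varphi(a)$ since any strict upper bound would also lie above $y_0$. Such $z'$ cannot lie in $Y_L$, for otherwise $z' \in M$ would contradict maximality of $y_0$ in $(X_L \setminus \varphi(a)) \cap Y_L$. From $z' \in X_L \setminus \varphi(a) = \down \cl_\pi M$ and maximality of $z'$ one deduces $z' \in \cl_\pi M$; Hausdorffness of $X_L$ lets me intersect any open neighbourhood of $z'$ with one missing $y_0$ to conclude $z' \in \cl_\pi(M \setminus \{y_0\})$, and then $y_0 \le z'$ places $y_0 \in \down \cl_\pi(M \setminus \{y_0\})$. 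I expect this last step---upgrading $z$ to a maximal $z'$ and certifying it lies in $\cl_\pi(M \setminus \{y_0\})$ rather than merely in $\cl_\pi M$---to be the main obstacle.
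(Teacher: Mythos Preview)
Your proposal is correct and follows essentially the same approach as the paper: both parts reduce via Lemma~\ref{lem: meet prime} to the Esakia space, establish (1) through the order-reversing bijection $y \mapsto p_y$, and handle (2) by translating essentiality into the condition $y_0 \notin \down \cl_\pi(M \setminus \{y_0\})$ using the identity $X_L \setminus \varphi(a) = \down \cl_\pi M$. The only organizational difference is in the converse of (2): you argue by contrapositive (pick $z' > y_0$ maximal in $X_L \setminus \varphi(a)$ and show $z' \in \cl_\pi(M \setminus \{y_0\})$), whereas the paper argues directly (pick $x \in \max(X_L \setminus \varphi(a)) \setminus \cl_\pi(M \setminus \{y\})$ and show $x = y$)---these are mirror images of the same idea, and your aside that $z' \notin Y_L$ is correct but unnecessary for the conclusion.
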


\begin{proof}
(1). First suppose that $\varphi(p)=X_L\setminus\down y$ for some $y \in \max[(X_L \setminus \varphi(a)) \cap Y_L]$.
Let $q\ge a$ be a meet-prime with $q\le p$. Then $\varphi(q) = X_L\setminus \down z$ for some $z \in Y_L$ by Lemma~\ref{lem: meet prime}(1). Since $a \le q$, we have
$z \in X_L \setminus \varphi(a)$, so $z \in (X_L \setminus \varphi(a)) \cap Y_L$. From $q \le p$ it follows that $\down y \subseteq \down z$,
so $y \le z$. Since $y \in \max[(X_L\setminus U) \cap Y_L]$, we conclude that $y = z$, and hence $q = p$. Thus, $p$ is a minimal prime.

Conversely, suppose $p$ is a minimal prime. Then $\varphi(p)=X_L\setminus\down y$ for some $y \in (X_L\setminus \varphi(a)) \cap Y_L$.
Let $y \le z$ with $z \in (X_L \setminus \varphi(a)) \cap Y_L$. There is a meet-prime $q\in L$ such that $\varphi(q) = X_L \setminus \down z$
and $\varphi(a) \subseteq \varphi(q) \subseteq \varphi(p)$. Therefore, $a\le q\le p$, and since $p$ is a minimal prime, $q = p$. Thus, $z = y$,
proving that $y \in \max[(X_L \setminus \varphi(a)) \cap Y_L]$.

(2). Suppose $a=\bigwedge \func{Min}(a)$. By Lemma~\ref{lem: meet prime}(3),
\[
X_L\setminus\varphi(a) = \down \cl_\pi \bigcup \{ X_L\setminus\varphi(p) \mid p\in \func{Min}(a)\}.
\]
By (1),
\begin{align*}\label{equation}
X_L\setminus\varphi(a) &= \down \cl_\pi \bigcup \{ \down y \mid y \in \max[(X_L \setminus \varphi(a)) \cap Y_L] \} \\
&= \down \cl_\pi \down \max[(X_L \setminus \varphi(a)) \cap Y_L] \tag{$\dagger$}\\
&= \down \cl_\pi \max[(X_L \setminus \varphi(a)) \cap Y_L].
\end{align*}

Now suppose $\varphi(p)=X_L\setminus \down y$ for some $y \in \max(X_L \setminus \varphi(a)) \cap Y_L$. Set
\[
T = \max[(X_L \setminus \varphi(a)) \cap Y_L] \setminus \{y\}.
\]
If $\varphi(a) = X_L \setminus \down \cl_\pi(T)$, then
$X_L \setminus \down \cl_\pi(T) \subseteq X \setminus \down y$, so $\down y \subseteq \down \cl_\pi(T)$. Since
$y \in \max(X_L \setminus \varphi(a))$ and $\cl_\pi(T) \subseteq X_L \setminus \varphi(a)$, it follows that $y \in \cl_\pi(T)$.
But $\down y$ is a clopen set containing $y$ and $\down y \cap T = \varnothing$ by definition of $T$. Thus, $y \notin \cl_\pi(T)$.
The obtained contradiction proves that $X_L \setminus \varphi(a) \ne \down \cl_\pi(T)$, so $p$ is an essential prime.

Conversely, suppose $p$ is an essential prime. We have $\varphi(p)=X_L \setminus \down y$ for some $y \in \max[(X_L \setminus \varphi(a)) \cap Y_L]$. As above set $T = \max[(X_L \setminus \varphi(a)) \cap Y_L] \setminus \{y\}$.  If $X_L \setminus \varphi(a) = \down \cl_\pi(T)$, then $a = \bigwedge (\func{Min}(a) \setminus \{p\})$ by Lemma~\ref{lem: meet prime}(3), which is false since $p$ is an essential prime. Therefore, $X_L \setminus \varphi(a) \not\subseteq \down \cl_\pi(S)$. Thus,
$\max(X_L \setminus \varphi(a)) \not\subseteq \cl_\pi(S)$. Let $x \in \max(X_L \setminus \varphi(a))$ with $x \notin \cl_\pi(S)$. Then there is a clopen neighborhood $U$ of $x$ with $U \cap S = \varnothing$. Let $V$ be a clopen upset containing $x$. Since $x \in \max(X_L \setminus \varphi(a))$, by (\ref{equation}),
$x \in \cl_\pi(\max[(X_L \setminus \varphi(a)) \cap Y_L])$. The set $U\cap V$ is an open neighborhood of $x$ contained in $U$, so
\[
U \cap V \cap \max[(X_L \setminus \varphi(a)) \cap Y_L] \ne \varnothing \mbox{ and } U \cap V \cap S = \varnothing.
\]
Therefore, $y \in V$. Since this is true for all clopen upsets containing $x$, it follows that $x \le y$. Thus, $x = y$ as
$x \in \max(X_L \setminus \varphi(a))$. Consequently, $y \in \max(X_L \setminus \varphi(a)) \cap Y_L$.
\end{proof}

We are ready to give an alternate proof of the results of \cite{NR87}.

\begin{theorem}
Let $L$ be a frame.
\begin{enumerate}
\item \cite[p.~264]{NR87} $L$ is spatial iff $a = \bigwedge \func{Min}(a)$ for each $a\in L$.
\item \cite[Thm.~3.4]{NR87} $N(L)$ is spatial iff each $1 \ne a \in L$ has an essential prime.
\end{enumerate}
\end{theorem}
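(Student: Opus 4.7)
For part (1), the plan is to translate the condition $a = \bigwedge \func{Min}(a)$ into a statement about $X_L$ and $Y_L$. Applying Lemma~\ref{lem: meet prime}(3) with $S = \func{Min}(a)$ and unwinding each $X_L \setminus \varphi(p)$ as $\down y$ using Proposition~\ref{prop: essential}(1), exactly as in the chain of equalities in the proof of Proposition~\ref{prop: essential}(2), the condition becomes
\[
D = \down \cl_\pi M, \qquad \text{where } D = X_L \setminus \varphi(a) \text{ and } M = \max[D \cap Y_L].
\]
By Theorem~\ref{prop: dense = spatial}, part (1) therefore reduces to showing that $Y_L$ is dense in $(X_L,\pi)$ if and only if this equality holds for every clopen downset $D$ of $X_L$.

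For the easier half, assume the equality holds for every $a$. Given $a \not\le b$ in $L$, apply the equality to $b$: setting $D = X_L \setminus \varphi(b)$ and choosing any $x \in \varphi(a) \setminus \varphi(b) \subseteq D$, the equality produces $z \in \cl_\pi M$ with $x \le z$. Since $\varphi(a)$ is an open upset containing $x$, it contains $z$, and being open it must already meet $M$ itself, yielding a point of $Y_L \cap (\varphi(a) \setminus \varphi(b))$. Thus $Y_L$ is dense, so $L$ is spatial.

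The harder direction I attack by contradiction. Suppose $Y_L$ is dense but some $x \in D$ does not lie in $\down \cl_\pi M$; then $\up x$ and $\cl_\pi M$ are disjoint closed sets with $\up x$ an upset, so Lemma~\ref{lem: basic facts}(2) provides a clopen upset $U$ with $\up x \subseteq U$ and $U \cap M = \varnothing$. Density of $Y_L$ in the nonempty clopen set $U \cap D$ supplies a point $z \in U \cap D \cap Y_L$. Now $D \cap Y_L$ is closed in $(Y_L,\tau)$ by Lemma~\ref{lem: tau}(2), and $(Y_L,\tau)$ is sober by Proposition~\ref{prop: YL = pt(L)}, so the closed subspace $D \cap Y_L$ is itself sober. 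In any sober space the closure of a chain is irreducible and therefore has a generic point that is an upper bound for the chain, so Zorn's lemma yields a maximal $z^* \in D \cap Y_L$ with $z \le z^*$, whence $z^* \in M$. Since $U$ is an upset containing $z$, it also contains $z^*$, contradicting $U \cap M = \varnothing$. The main obstacle is precisely to force the Zornian maximum to sit inside the prescribed neighborhood of $x$; this is exactly why Lemma~\ref{lem: basic facts}(2) is needed to separate with a clopen \emph{upset} rather than an arbitrary clopen set.

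For part (2) I combine Theorem~\ref{thm: spatial}(2), which says $N(L)$ is spatial iff every nonempty nuclear subset of $X_L$ meets $Y_L$, with Proposition~\ref{prop: essential}(2), which says $a \ne 1$ has an essential prime iff $\max(X_L \setminus \varphi(a)) \cap Y_L \ne \varnothing$. For the forward direction, spatiality of $N(L)$ implies spatiality of $L$ by the result already derived in this section, so essential primes are defined for every $a$; moreover, for each $a \ne 1$ the nuclear set $\max(X_L \setminus \varphi(a))$ is nonempty by Corollary~\ref{cor: max}(1) and hence meets $Y_L$, producing an essential prime of $a$. For the reverse, the existence of an essential prime for each $1 \ne a$ in particular gives $a = \bigwedge \func{Min}(a)$ for every $a$, so $L$ is spatial by (1). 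Given a nonempty nuclear $N$, the set $\down N$ is a nonempty clopen downset, say $\down N = X_L \setminus \varphi(a)$ with $a \ne 1$; the essential prime of $a$ supplies a point of $\max(\down N) \cap Y_L$, which coincides with $\max(N) \cap Y_L \subseteq N \cap Y_L$ by Lemma~\ref{lem: basic facts}(4). Thus every nonempty nuclear subset meets $Y_L$, and $N(L)$ is spatial by Theorem~\ref{thm: spatial}.
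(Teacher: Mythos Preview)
Your proof is sound modulo one small slip: in the forward direction of (1), Lemma~\ref{lem: basic facts}(2) separates a closed upset from a closed \emph{downset}, so you should apply it to $\up x$ and $\down\cl_\pi M$ rather than to $\cl_\pi M$. This is free, since $x\notin\down\cl_\pi M$ already gives $\up x\cap\down\cl_\pi M=\varnothing$ (any $w\ge x$ with $w\le v\in\cl_\pi M$ would put $v\in\up x\cap\cl_\pi M$), and the conclusion $U\cap M=\varnothing$ is unchanged.

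For the converse of (1) and for both directions of (2) your argument matches the paper's essentially line for line, including the use of $\max(\down N)=\max(N)$ via Lemma~\ref{lem: basic facts}(4). Your added remark that spatiality of $N(L)$ forces spatiality of $L$, so that essential primes are actually defined, is a nice point of hygiene the paper leaves implicit.

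The genuine divergence is in the forward direction of (1). The paper takes $x\in\max D$ and, for each clopen neighborhood $U$ of $x$, uses density of $Y_L$ to produce a point of $(U\cap D)\cap Y_L$, and from this concludes $x\in\cl_\pi M$. You instead separate $\up x$ from $\down\cl_\pi M$ by a clopen \emph{upset} $U$, use density to find $z\in U\cap D\cap Y_L$, and then exploit sobriety of $(Y_L,\tau)\cong\pt(L)$ (closed subspaces of sober spaces are sober; closures of chains are irreducible and hence have generic points) to run Zorn's lemma inside $D\cap Y_L$ and push $z$ up to some $z^*\in M$, which remains in the upset $U$ and yields the contradiction. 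Your route is longer but makes the passage from ``a point of $D\cap Y_L$ near $x$'' to ``a point of $M=\max[D\cap Y_L]$ near $x$'' completely explicit; the paper's one-line density argument does not isolate this maximality step.
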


\begin{proof}
(1) First suppose that $L$ is spatial. By Theorem~\ref{prop: dense = spatial}, $Y_L$ is dense in $(X_L,\pi)$. Let $a\in L$. To see that $a = \bigwedge \func{Min}(a)$, we need to show that $X_L \setminus \varphi(a) = \down \cl_\pi \max[(X_L\setminus \varphi(a))\cap Y_L]$. For this it is sufficient to show that
$\max(X_L\setminus \varphi(a))\subseteq\cl_\pi(\max[(X_L\setminus \varphi(a))\cap Y_L])$. Let
$x\in \max(X_L\setminus \varphi(a))$ and let $U$ be a clopen neighborhood of $x$. Then $U\setminus\varphi(a)\ne\varnothing$.
Since $Y_L$ is dense, $(U\setminus\varphi(a))\cap Y_L\ne\varnothing$. Thus, $x\in \cl_\pi(\max[(X_L\setminus \varphi(a))\cap Y_L])$.

Conversely, suppose that $a = \bigwedge \func{Min}(a)$ for each $a\in L$. By Theorem~\ref{prop: dense = spatial}, it is
sufficient to show that $Y_L$ is dense in $(X_L,\pi)$. Let $U$ be nonempty clopen. Then $\down U$ is a clopen
downset, so there is $a\in L$ with $\varphi(a)=X_L\setminus \down U$. Since $a = \bigwedge \func{Min}(a)$, by Lemma~\ref{lem: meet prime}(3) and Proposition~\ref{prop: essential}(1), $\down U = \down \cl_\pi \max(\down U \cap Y_L)$. Because $\max(\down U) = \max(U)$, we have $\max(U) \subseteq \cl_\pi \max(\down U \cap Y_L)$. Therefore, $U \cap \cl_\pi \max(\down U \cap Y_L) \ne \varnothing$. Since $U$ is clopen, $U \cap \max(\down U \cap Y_L) \ne \varnothing$, so $U \cap Y_L \ne \varnothing$. Thus, $Y_L$ is dense in $(X_L,\pi)$.

(2) First suppose that $N(L)$ is spatial. Let $a \ne 1$. Then
$\max(X_L\setminus \varphi(a))$ is a nonempty nuclear set by Theorem~\ref{thm: w_a}(3).
Therefore, $\max(X_L\setminus \varphi(a)) \cap Y_L \ne \varnothing$ by Theorem~\ref{thm: spatial}. If $y \in \max(X_L \setminus U) \cap Y_L$,
then the $p$ such that $\varphi(p)=X_L \setminus \down y$ is an essential prime by Proposition~\ref{prop: essential}(2).

Conversely, suppose that each $1 \ne a \in L$ has an essential prime. Let $N$ be a nonempty nuclear set.
Then $\down N$ is a nonempty clopen downset, so there is $a\ne 1$ such that $\varphi(a)=X_L\setminus \down N$.
Therefore, $a$ has an essential prime $p$. By Proposition~\ref{prop: essential}(2), there is $y \in \max(X_L\setminus \varphi(a)) \cap Y_L$ with
$\varphi(p) = X_L \setminus \down y$, so $\max(X_L\setminus \varphi(a)) \cap Y_L\ne\varnothing$. Thus,
$\max(\down N) \cap Y_L = \max(N) \cap Y_L$ is nonempty, and so $N\cap Y_L$ is nonempty. Consequently, $N(L)$ is spatial by
Theorem~\ref{thm: spatial}.
\end{proof}

\section{When is $N(L)$ boolean?} \label{sec: boolean}

In \cite{BM79} Beazer and Macnab proved that if $L$ is boolean, then $N(L) \cong L$. {To see this in terms of $X_L$,} if $L$ is boolean, 
then the order of $X_L$ is equality, and $L$ is isomorphic to the clopens of $X_L$. Since the order of $X_L$ is equality, nuclear sets of $X_L$ are 
exactly clopen sets of $X_L$. Thus, $N(L)$ is isomorphic to $L$, yielding \cite[Cor.~1]{BM79}. 

Beazer and Macnab also gave a characterization of when $N(L)$ is boolean \cite[Thm.~2]{BM79}.  
Let $L$ be a frame. Recall that $d \in L$ is \emph{dense} if $\lnot d = 0$. If $a \in L$, then $\up a$ is a frame, and $d \ge a$
is dense in $\up a$ iff $d \to a = a$. Following \cite[Sec.~1.4]{Ple00} and \cite[Sec.~6]{BGJ16}, we call $L$ \emph{scattered}
if for each $a \in L$ the principal upset $\up a$ has a smallest dense element. Using this definition, \cite[Thm.~2]{BM79} can
be phrased as $N(L)$ is boolean iff $L$ is scattered.
In this section we give a characterization of when $N(L)$ is boolean in terms of $X_L$, from which we derive 
\cite[Thm.~2]{BM79}.

For a topological space $S$, let $\RC(S)$ be the boolean frame of regular closed sets of $S$. By \cite[Prop.~4.12]{BGJ13},
if $L$ is a frame, then its booleanization $B(N(L))$ is dually isomorphic to $\RC(X_L)$.

\begin{theorem} \label{thm: boolean}
Let $L$ be a frame and $X_L$ its Esakia space. Then the following conditions are equivalent.
\begin{enumerate}
\item $N(L)$ is boolean;
\item $N(X_L) = \RC(X_L)$;
\item $\max(D)$ is clopen for each clopen downset $D$ of $X_L$.
\item $L$ is scattered.
\end{enumerate}
\end{theorem}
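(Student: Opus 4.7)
My plan is to run the cycle $(1) \Leftrightarrow (2) \Leftrightarrow (3)$ and separately link $(3) \Leftrightarrow (4)$ via Esakia duality. For $(1) \Leftrightarrow (2)$, I combine $N(L) \cong N(X_L)^{\mathrm{op}}$ with $B(N(L)) \cong \RC(X_L)^{\mathrm{op}}$ from \cite[Prop.~4.12]{BGJ13}. Since $\RC(X_L) \subseteq N(X_L)$ by Lemma~\ref{lem: basic nuclear facts}(2), the condition $N(L) = B(N(L))$ is equivalent to $N(X_L) = \RC(X_L)$.

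For $(3) \Rightarrow (2)$ I let $N \in N(X_L)$ and $x \in N$, and show every clopen neighborhood $W$ of $x$ meets $\Int_\pi N$. The set $W \cap N$ is nuclear by Lemma~\ref{lem: basic nuclear facts}(3) and nonempty, so $\down(W \cap N)$ is a clopen downset, and by (3), $\max(W \cap N) = \max \down(W \cap N)$ is a nonempty clopen subset of $N$ contained in $W$. Being open in $X_L$ and inside $N$, it lies in $\Int_\pi N$, so $W \cap \Int_\pi N \ne \varnothing$ and $x \in \cl_\pi \Int_\pi N$; hence $N$ is regular closed.

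The main hurdle is $(2) \Rightarrow (3)$. I let $D = X_L \setminus \varphi(a)$ be a clopen downset and $A = \max D$; by Corollary~\ref{cor: max}(1), $A$ is nuclear, and by (2), $A$ is regular closed, so $A = \cl_\pi \Int_\pi A$. The key step is to observe that $\varphi(a) \cup \Int_\pi A$ is an \emph{open upset} of $X_L$: it is plainly open, and if $u \in \Int_\pi A \subseteq \max D$ and $u \le x$, then either $x \in D$, in which case the maximality of $u$ in $D$ forces $x = u \in \Int_\pi A$, or $x \in X_L \setminus D = \varphi(a)$. Extremal order-disconnectedness then yields that $\cl_\pi(\varphi(a) \cup \Int_\pi A) = \varphi(a) \cup A$ is clopen; intersecting with the clopen $D$ and using $A \cap \varphi(a) = \varnothing$ gives $A = (\varphi(a) \cup A) \cap D$, which is clopen.

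For $(3) \Leftrightarrow (4)$ I apply Esakia duality to the quotient $\up a \cong L_{u_a}$, whose dual Esakia space is $D = X_L \setminus \varphi(a)$. A direct computation using $\varphi(d \to a) = X_L \setminus \down(\varphi(d) \cap D)$ shows that $d \in \up a$ is dense in $\up a$ iff $\down(\varphi(d) \cap D) = D$ iff $\max D \subseteq \varphi(d)$, equivalently $\varphi(d) \supseteq \varphi(a) \cup \max D$. Since any element above a point of $\max D$ lies in $\varphi(a)$, $\varphi(a) \cup \max D$ is a closed upset, and by Priestley separation it coincides with the intersection of all clopen upsets containing it. Consequently, $\up a$ has a smallest dense element iff $\varphi(a) \cup \max D$ is itself clopen iff $\max D$ is clopen. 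Varying $a \in L$ completes $(3) \Leftrightarrow (4)$ and recovers Beazer--Macnab.
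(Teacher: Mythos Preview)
Your proof is correct. The implications $(1)\Leftrightarrow(2)$ and $(3)\Rightarrow(2)$ match the paper's treatment (the paper simply cites \cite[Thm.~4.14]{BGJ13} for the former and gives the same clopen-neighborhood argument for the latter).

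Your $(2)\Rightarrow(3)$ takes a genuinely different route. The paper works from below: it sets $V = D\setminus\max(D)$, observes that $\cl_\pi V$ is a nuclear \emph{downset} and hence automatically clopen, and then shows the ``boundary'' $\cl_\pi V \cap \max(D)$ is nuclear, therefore regular closed by (2), but with empty interior, forcing $V$ to be closed. You instead work from above: you exhibit $\varphi(a)\cup\Int_\pi A$ as an open \emph{upset} and invoke extremal order-disconnectedness directly to make $\varphi(a)\cup A$ clopen. Your argument is shorter and isolates the role of extremal order-disconnectedness cleanly; the paper's argument stays entirely inside the nuclear/regular-closed machinery and never names extremal order-disconnectedness explicitly. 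Both are perfectly good.

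For $(3)\Leftrightarrow(4)$ the paper simply cites \cite[Thm.~6.6]{BGJ16}, whereas you supply a self-contained argument via Esakia duality. Your computation is correct: the key point, which you handle properly, is that $\varphi(a)\cup\max D$ is a closed upset equal to $\bigcap\{\varphi(d):d\in S\}$ where $S$ is the set of dense elements of $\up a$; if a least $d_0\in S$ exists then $\varphi(d_0)$ is contained in this intersection and also contains it, forcing $\varphi(a)\cup\max D = \varphi(d_0)$ to be clopen, and the converse is immediate. This makes your write-up more self-contained than the paper's at this step.
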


\begin{proof}
(1) $\Leftrightarrow$ (2). See \cite[Thm.~4.14]{BGJ13}.

(2) $\Rightarrow$ (3). Let $D$ be a clopen downset. Then $D$ is nuclear by Lemma~\ref{lem: basic nuclear facts}(1),
so $\max(D)$ is nuclear by Corollary~\ref{cor: max}(1). Let $V = D \setminus \max(D)$. Then $V$ is open, so $\cl_\pi(V)$ is regular
closed, and hence nuclear by Lemma~\ref{lem: basic nuclear facts}(2). Clearly $V$ is a downset. We show that $\cl_\pi(V)$ is a downset. 
Let $x \le y \in \cl_\pi(V)$. Since
$\cl_\pi(V) \subseteq D$, we have $x\in D$. If $x\notin\max(D)$, then $x\in V\subseteq\cl_\pi(V)$. If $x\in\max(D)$, then $x=y$, so
again $x \in \cl_\pi(V)$. Thus, $\cl_\pi(V)$ is a downset. Since $\cl_\pi(V)$ is nuclear, it is then a clopen downset, so
$F := \cl_\pi(V) \cap \max(D)$ is nuclear by Lemma~\ref{lem: basic nuclear facts}(3), and hence regular closed by (2).
This implies that $F = \cl_\pi(V)\setminus V$. Since $\Int_\pi(\cl_\pi(V) \setminus V) = \varnothing$ and $F$ is regular closed,
this forces $F = \varnothing$. Thus, $\cl_\pi(V) = V$, so $V$ is clopen, and hence $\max(D) = D \setminus V$ is clopen.

(3) $\Rightarrow$ (2). By Lemma~\ref{lem: basic nuclear facts}(2), it suffices to prove that each nuclear subset of $X_L$ is regular 
closed. Let $N \in N(X_L)$. To show that $N$ is regular closed, it is enough to prove that $N \subseteq \cl_\pi (\Int_\pi (N))$. Let 
$x \in N$. For each clopen $U$ containing $x$ we have $U \cap N$ is a nonempty nuclear set, so $\down (U\cap N)$ is clopen. Thus, 
$\max\down(U\cap N)$ is clopen. But $\max\down(U\cap N)=\max(U \cap N)$, so $\max(U \cap N)$ is clopen, and hence it is contained 
in $\Int_\pi(N)$. Therefore, $U \cap \Int_\pi(N) \ne \varnothing$, which proves $x \in \cl_\pi(\Int_\pi(N))$. Thus, $N$ is regular 
closed.

(3) $\Leftrightarrow$ (4). See \cite[Thm.~6.6]{BGJ16}.
\end{proof}

\section{Soberification and the theorems of Simmons and Isbell} \label{sec: Simmons}

Let $S$ be a topological space. In this section we show how to derive the results of Simmons \cite{Sim80} and Isbell \cite{Isb91} relating
$S$ being weakly scattered and scattered to that of $N(\Op S)$ being spatial and boolean. For this we recall that the \emph{soberification}
of $S$ is the space $\pt(\Op S)$.
Viewing points of $\Op S$ as completely prime filters, we have the mapping $\varepsilon : S \to \pt(\Op S)$ sending $s$ to the
completely prime filter $\{ U \in \Op S \mid s \in U\}$. It is well known (see, e.g., \cite[Sec.~II.1]{Joh82}) that $\varepsilon$
is continuous, is an embedding iff $S$ is $T_0$, and induces an isomorphism of the frames of open sets.

\begin{proposition} \label{prop: sober}
For a topological space $S$, the space $(Y_{\Op S},\tau)$ is $($homeomorphic to$)$ the soberification of $S$.
\end{proposition}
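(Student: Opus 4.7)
The plan is to observe that this proposition is essentially a specialization of Proposition~\ref{prop: YL = pt(L)} to the frame $L = \Op S$, combined with the definition of soberification. Specifically, Proposition~\ref{prop: YL = pt(L)} established that for any frame $L$, the space $(Y_L,\tau)$ is homeomorphic to $\pt(L)$, the space of points of $L$, where the homeomorphism sends a nuclear point $y \in Y_L$ (viewed as a completely prime filter of $L$, via \cite[Lem.~5.1]{BGJ16}) to the same completely prime filter viewed as a point of $L$.

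Taking $L = \Op S$, this immediately yields that $(Y_{\Op S},\tau)$ is homeomorphic to $\pt(\Op S)$. Since the soberification of $S$ is \emph{defined} to be $\pt(\Op S)$ (as recalled just before the statement), this is precisely the desired conclusion.

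So the proof would be essentially a one-line invocation: apply Proposition~\ref{prop: YL = pt(L)} to $L = \Op S$ and use the definition of soberification. There is no real obstacle here; the substantive content has already been carried out in establishing Proposition~\ref{prop: YL = pt(L)} (which identifies the nuclear points of $X_L$ with completely prime filters of $L$ and matches the topology $\tau$ with the hull-kernel topology on $\pt(L)$). The only thing to mention explicitly, if desired, is the identification $\varepsilon(s) = \{U \in \Op S \mid s \in U\}$ as the canonical continuous map $S \to \pt(\Op S) \cong (Y_{\Op S},\tau)$, so that the homeomorphism is compatible with the canonical map from $S$ into its soberification.
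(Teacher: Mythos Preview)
Your proposal is correct and takes essentially the same approach as the paper: the paper's proof is exactly the one-line invocation of Proposition~\ref{prop: YL = pt(L)} applied to $L = \Op S$, followed by the observation that $\pt(\Op S)$ is by definition the soberification of $S$. Your additional remarks about the map $\varepsilon$ are accurate but not needed for the proof itself.
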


\begin{proof}
By Proposition~\ref{prop: YL = pt(L)}, if $L$ is a frame, then $(Y_L,\tau)$ is (homeomorphic to) $\pt(L)$. Thus, $(Y_{\Op S},\tau)$
is (homeomorphic to) $\pt(\Op S)$, and hence is (homeomorphic to) the soberification of $S$.
\end{proof}

Let $S$ be a topological space and $T$ a subspace of $S$. We recall from the introduction that a point $x \in T$ is
\emph{weakly isolated} in $T$ if there is an open subset $U$ of $S$ such that $x \in T \cap U \subseteq \overline{\{x\}}$;
and that $S$ is \emph{weakly scattered} if each nonempty closed subspace has a weakly isolated point.

\begin{lemma} \label{lem: weakly isolated implies maximal}
Let $L$ be a spatial frame and $D$ a clopen downset of $(X_L,\pi)$. If $y \in D\cap Y_L$ is weakly isolated in $D\cap Y_L$ viewed as a
subspace of $(Y_L,\tau)$, then $y \in \max(D)$.
\end{lemma}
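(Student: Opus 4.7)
The plan is to unfold the definition of weakly isolated using Lemma~\ref{lem: tau}(1) and (3), and then argue by contradiction using the spatiality of $L$ (i.e., density of $Y_L$ in $X_L$) together with Priestley separation.

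First I would unpack the hypothesis. By Lemma~\ref{lem: tau}(3), the specialization closure of $\{y\}$ in $(Y_L,\tau)$ is $\down y \cap Y_L$. Hence the assumption that $y$ is weakly isolated in $D \cap Y_L$ (as a subspace of $(Y_L,\tau)$) gives an open set $U$ of $(Y_L,\tau)$ with
\[
y \in D \cap Y_L \cap U \subseteq \down y \cap Y_L.
\]
By Lemma~\ref{lem: tau}(1), $U = V \cap Y_L$ for some clopen upset $V$ of $X_L$; in particular $y \in V$.

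Next I would argue by contradiction. Suppose $y \notin \max(D)$, so there is $z \in D$ with $y < z$. Since $V$ is an upset and $y \in V$, we have $z \in V$. Because $z \not\le y$, the Priestley separation axiom yields a clopen upset $W$ with $z \in W$ and $y \notin W$. Then $D \cap V \cap W$ is a clopen subset of $X_L$ containing $z$, hence is nonempty.

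Now I would invoke spatiality. By Theorem~\ref{prop: dense = spatial}, $Y_L$ is dense in $(X_L,\pi)$, so there exists $w \in D \cap V \cap W \cap Y_L$. On the one hand, $w \in D \cap Y_L \cap U$, so by the weak isolation inclusion $w \le y$; but then $w \in W$ and the upset property of $W$ would force $y \in W$, contradicting $y \notin W$. Thus $y \in \max(D)$. The main subtlety is recognizing that the density provided by spatiality is exactly what upgrades the Priestley-separating point $z$ (which need not lie in $Y_L$) to an honest element of $Y_L$ inside the clopen neighborhood $D \cap V \cap W$, so that the weak-isolation inclusion can actually be applied.
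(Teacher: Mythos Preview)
Your proof is correct and follows essentially the same approach as the paper: unpack weak isolation via Lemma~\ref{lem: tau}, then use density of $Y_L$ (Theorem~\ref{prop: dense = spatial}) to pass from a containment on $Y_L$ to one on $X_L$. The only minor difference is that the paper exploits directly that $\down y$ is clopen (since $y\in Y_L$) to conclude $D\cap V\cap (X_L\setminus\down y)=\varnothing$ and hence $D\cap V\subseteq\down y$, whereas you reach the same end by Priestley-separating a hypothetical $z>y$ and finding a $Y_L$-point in the resulting clopen; both arguments are equivalent in spirit and length.
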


\begin{proof}
If $y \in D \cap Y_L$ is weakly isolated, then there is an open subset $U$ of $(Y_L,\tau)$ such that
$y \in D\cap Y_L \cap U \subseteq \cl_\tau(\{y\})=\down y \cap Y_L$, where the equality follows from Lemma~\ref{lem: tau}(3). By Lemma~\ref{lem: tau}(1), we may write $U = V \cap Y_L$
for some clopen upset $V$ of $X_L$. Therefore, $D \cap V \cap Y_L \subseteq \down y$. This implies
$D \cap V \cap (X \setminus \down y) \cap Y_L = \varnothing$, so $D \cap V \cap (X \setminus \down y) = \varnothing$ as $Y_L$ is
dense in $(X_L,\pi)$ by Theorem~\ref{prop: dense = spatial}. Thus, $D \cap V \subseteq \down y$. If $y \le z$ for some
$z \in D$, then $z \in D \cap V$ since $V$ is an upset. Therefore, $z \in \down y$, which forces $z = y$. Thus, $y \in \max(D)$.
\end{proof}

\begin{theorem} \label{thm: weakly scattered}
Let $L$ be a spatial frame. Then $N(L)$ is spatial iff $(Y_L, \tau)$ is weakly scattered.
\end{theorem}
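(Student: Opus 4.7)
The plan is to combine the characterization of spatiality of $N(L)$ from Theorem~\ref{thm: spatial}(2) (namely, that every nonempty nuclear subset of $X_L$ meets $Y_L$) with Lemma~\ref{lem: weakly isolated implies maximal} and a Priestley-style separation argument.

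For the reverse direction ($(Y_L,\tau)$ weakly scattered $\Rightarrow$ $N(L)$ spatial), I would verify condition (2) of Theorem~\ref{thm: spatial}. Given a nonempty $N \in N(X_L)$, the set $\down N$ is a clopen downset, so $\down N \cap Y_L$ is a closed subset of $(Y_L,\tau)$ by Lemma~\ref{lem: tau}(2). This set is nonempty because $Y_L$ is dense in $(X_L,\pi)$ by Theorem~\ref{prop: dense = spatial}. Weak scatteredness then supplies a weakly isolated point $y \in \down N \cap Y_L$; by Lemma~\ref{lem: weakly isolated implies maximal} together with Lemma~\ref{lem: basic facts}(4), $y \in \max(\down N) = \max(N) \subseteq N$, so $N \cap Y_L \ne \varnothing$.

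For the forward direction ($N(L)$ spatial $\Rightarrow$ $(Y_L,\tau)$ weakly scattered), I would take a nonempty closed subset $F$ of $(Y_L,\tau)$ and use Lemma~\ref{lem: tau}(2) to write $F = D \cap Y_L$ for a clopen downset $D$. Since $F \ne \varnothing$, $D$ is nonempty, and by Corollary~\ref{cor: max}(1) the set $\max(D)$ is nuclear; Theorem~\ref{thm: spatial} then yields some $y \in \max(D) \cap Y_L$. It remains to exhibit $y$ as weakly isolated in $F$, i.e., to produce a clopen upset $V$ of $X_L$ such that $V \cap D \subseteq \down y$, because then $V \cap Y_L$ is a $\tau$-open neighborhood of $y$ witnessing $y \in F \cap (V \cap Y_L) \subseteq \down y \cap Y_L = \cl_\tau(\{y\})$.

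The main obstacle is producing this clopen upset $V$, and here is the key observation. Because $y \in Y_L$, the downset $\down y$ is clopen, and hence so is $D \setminus \down y$; therefore $\down(D \setminus \down y)$ is a closed downset by Lemma~\ref{lem: basic facts}(1). I claim this closed downset is disjoint from the closed upset $\up y$: if $z \ge y$ and $z \le w$ for some $w \in D \setminus \down y$, then $w \ge y$ and $w \in D$, forcing $w = y$ by maximality, which contradicts $w \notin \down y$. Lemma~\ref{lem: basic facts}(2) then delivers a clopen upset $V$ with $\up y \subseteq V$ and $V \cap \down(D \setminus \down y) = \varnothing$, giving $V \cap D \subseteq \down y$ as required and completing the proof.
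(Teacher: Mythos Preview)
Your proposal is correct. The reverse direction is essentially identical to the paper's argument.

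For the forward direction you take a genuinely different route. The paper, after locating $x\in\max(D)\cap Y_L$, builds an auxiliary nuclear set
\[
N=\cl_\pi\Bigl(\bigcup\{\down z\setminus\down x\mid z\in F,\ z\not\le x\}\Bigr)
\]
via Lemma~\ref{lem: nuclear}, checks that $x\notin\down N$, and uses $X_L\setminus\down N$ as the witnessing clopen upset. You bypass this construction entirely: from $y\in\max(D)$ and $\down y$ clopen you observe that the closed downset $\down(D\setminus\down y)$ is disjoint from $\up y$, and then invoke Priestley separation (Lemma~\ref{lem: basic facts}(2)) to obtain $V$. Your argument is more elementary---it needs neither Lemma~\ref{lem: nuclear} nor any verification that an auxiliary set is nuclear---and in fact it can be shortened further: since $D\setminus\down y$ is clopen and $X_L$ is an Esakia space, $\down(D\setminus\down y)$ is already clopen, so $V:=X_L\setminus\down(D\setminus\down y)$ works directly without invoking Lemma~\ref{lem: basic facts}(2). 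The paper's approach, by contrast, illustrates how the nuclear machinery (closure of unions of nuclear sets) can be used constructively, which fits the surrounding development but is not strictly needed here.
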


\begin{proof}
First suppose that $N(L)$ is spatial. To show $(Y_L,\tau)$ is weakly scattered, let $F\in\F_\tau(Y_L)$ be nonempty. By Lemma~\ref{lem: tau}(2),
$F = D \cap Y_L$ for some clopen downset $D$. Since $\max(D)$ is nuclear (see
Corollary~\ref{cor: max}(1)) and nonempty (as $\down \max(D) = D$),
we have $\max(D) \cap Y_L \ne \varnothing$ by Theorem~\ref{thm: spatial}. Let $x \in \max(D) \cap Y_L$. We show that $x$ is weakly isolated
in $F$. Since $x\in Y_L$, the downset $\down x$ is clopen. Set
\[
N = \cl_\pi\left(\bigcup \{ \down y \setminus \down x \mid y \in F, y \not\le x\} \right).
\]
Since each $\down y \setminus \down x$ is clopen, hence nuclear, $N$ is nuclear by Lemma~\ref{lem: nuclear}. If $y \in F$, then
$\down y \setminus \down x \subseteq D$ since $D$ is a downset. Therefore, the union is in $D$, and so $N \subseteq D$. Moreover,
$\down N \subseteq D$ since $D$ is a downset. We see that $x \notin N$; for, $\down x$ is a clopen neighborhood of $x$ and
$\down x \cap (\down y \setminus \down x) = \varnothing$ for each $y \not\le x$ with $y \in Y_L$. Thus, $\down x$ misses the union,
and so $x \notin N$. Since $x \in \max(D) \setminus N$ and $\down N \subseteq D$, it follows that $x \notin \down N$. Hence
$X_L \setminus \down N$ is a clopen upset containing $x$. We have $x \in (X \setminus \down N) \cap F \subseteq \down x$ since if
$y \in F$ with $y \not\le x$, then $y \in N$. This shows that $x$ is weakly isolated in $F$. Consequently, $Y_L$ is weakly scattered.

Conversely, suppose that $Y_L$ is weakly scattered. By Theorem~\ref{thm: spatial}, it is sufficient to show that
if $N$ is a nonempty nuclear subset of $X_L$, then $N \cap Y_L \ne \varnothing$. Since $L$ is
spatial, $Y_L$ is dense in $(X_L,\pi)$ by Theorem~\ref{prop: dense = spatial}. Therefore, $\down N \cap Y_L \ne \varnothing$ since $\down N$
is a nonempty clopen. As $\down N \cap Y_L$ is a nonempty closed set in $(Y_L,\tau)$, since $Y_L$ is weakly scattered, there is a weakly isolated
point $y$ of $N \cap Y_L$. By Lemma~\ref{lem: weakly isolated implies maximal}, $y \in \max(\down N) = \max(N)$. Thus, $N \cap Y_L \ne \varnothing$.
\end{proof}

\begin{example}
The following example shows that the hypothesis of Theorem~\ref{thm: weakly scattered} that $L$ is spatial
is necessary. Let $L$ be the frame of regular open sets of the space $S = [0,1] \cup \{2\}$ with the usual Euclidean topology. Since $L$
is boolean, points of $L$ correspond to isolated points of $S$ (see, e.g., \cite[Sec.~II.5.4]{PP12}). Consequently, $Y_L$ is a singleton, and so $Y_L$ is weakly scattered.
As $L$ is boolean, $L = N(L)$. But $L$ is not spatial since it is not atomic. Thus, $N(L)$ is not spatial.
\end{example}

We next show how to derive Isbell's theorem \cite[Thm.~7]{Isb91} as a consequence of Theorem~\ref{thm: weakly scattered}. For this we require the following simple lemma.

\begin{lemma} \label{lem: weakly scattered implies sober}
If $S$ is a weakly scattered $T_0$-space, then $S$ is sober.
\end{lemma}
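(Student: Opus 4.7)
The plan is to show that under the $T_0$ hypothesis, every irreducible closed subset of $S$ is the closure of a (necessarily unique) point. Recall that a space is sober iff each irreducible closed set is the closure of a unique point; uniqueness is automatic in a $T_0$ space since distinct points have distinct closures with respect to the specialization order. So the content is existence.

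Let $F$ be an irreducible closed subset of $S$. Since $S$ is weakly scattered and $F$ is a nonempty closed subspace, there is a weakly isolated point $x \in F$, i.e., an open set $U$ of $S$ with
\[
x \in F \cap U \subseteq \overline{\{x\}}.
\]
I claim that $F = \overline{\{x\}}$. The inclusion $\overline{\{x\}}\subseteq F$ is immediate since $x\in F$ and $F$ is closed.

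For the reverse inclusion, suppose toward contradiction that $F \not\subseteq \overline{\{x\}}$. Then $F \setminus \overline{\{x\}}$ is a nonempty relatively open subset of $F$, and $F \cap U$ is another nonempty relatively open subset of $F$ (it contains $x$). However, by the weak isolation property,
\[
(F \cap U) \cap (F \setminus \overline{\{x\}}) \subseteq \overline{\{x\}} \cap (S \setminus \overline{\{x\}}) = \varnothing.
\]
This contradicts the irreducibility of $F$, which requires that any two nonempty relatively open subsets of $F$ meet. Hence $F = \overline{\{x\}}$, completing the proof.

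There is no real obstacle here; the argument is a direct unpacking of the definitions, and the only subtlety is noticing that weak isolation furnishes exactly the open set needed to split $F$ into two disjoint relatively open pieces when $F$ properly contains $\overline{\{x\}}$.
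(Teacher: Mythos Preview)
Your proof is correct and follows essentially the same approach as the paper's. The only cosmetic difference is that the paper phrases the irreducibility step in terms of closed sets, writing $F = (F \setminus U) \cup \overline{\{x\}}$ with $F \setminus U$ a proper closed subset, whereas you use the equivalent dual formulation that two nonempty relatively open subsets of $F$ must meet; the content is identical.
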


\begin{proof}
Let $F$ be a closed irreducible subset of $S$. Then $F$ is nonempty, so there is a weakly isolated point $x \in F$. Therefore, there is
an open set $U$ of $S$ such that $x \in U \cap F \subseteq \overline{\{x\}}$. Thus, $F \setminus U$ is a proper closed subset of $F$ and
$F = (F \setminus U) \cup \overline{\{x\}}$. Since $F$ is irreducible and $F \setminus U \ne F$, we must have
$F = \overline{\{x\}}$. As $S$ is $T_0$, we conclude that $S$ is sober.
\end{proof}

\begin{theorem} \label{thm: isbell}
Let $S$ be a topological space.
\begin{enumerate}
\item $N(\Op S)$ is spatial iff the soberification of $S$ is weakly scattered.
\item \cite[Thm.~7]{Isb91} If $S$ is $T_0$, then $S$ is sober and $N(\Op S)$ is spatial iff $S$ is weakly scattered.
In particular, if $S$ is sober, then $N(\Op S)$ is spatial iff $S$ is weakly scattered.
\end{enumerate}
\end{theorem}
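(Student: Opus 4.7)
The plan is to derive both parts of Theorem~\ref{thm: isbell} as essentially formal consequences of Theorem~\ref{thm: weakly scattered}, Proposition~\ref{prop: sober}, and Lemma~\ref{lem: weakly scattered implies sober}, with no further hard work needed.

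For part (1), I would first observe that for any topological space $S$ the frame $L = \Op S$ is spatial (this is standard: $\varepsilon : S \to \pt(\Op S)$ induces the identity at the level of opens, so $\eta : \Op S \to \Op(\pt(\Op S))$ is an isomorphism). Since $L = \Op S$ is spatial, Theorem~\ref{thm: weakly scattered} applies and gives that $N(\Op S)$ is spatial iff $(Y_{\Op S}, \tau)$ is weakly scattered. By Proposition~\ref{prop: sober}, $(Y_{\Op S}, \tau)$ is homeomorphic to the soberification $\pt(\Op S)$ of $S$, and the property of being weakly scattered is a topological invariant, so this is equivalent to the soberification of $S$ being weakly scattered. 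This finishes (1).

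For part (2), assume $S$ is $T_0$. For the forward direction, if $S$ is sober then $\varepsilon : S \to \pt(\Op S)$ is a homeomorphism, so $S$ is homeomorphic to its soberification; then if $N(\Op S)$ is spatial, part (1) yields that the soberification, and hence $S$ itself, is weakly scattered. Conversely, if $S$ is weakly scattered and $T_0$, then Lemma~\ref{lem: weakly scattered implies sober} gives that $S$ is sober, so again $S$ is homeomorphic to its soberification; the latter is therefore weakly scattered, and part (1) yields that $N(\Op S)$ is spatial. This proves the biconditional. The ``in particular'' clause is the special case where sobriety is assumed from the start (and sober spaces are $T_0$), so the equivalence ``$N(\Op S)$ is spatial iff $S$ is weakly scattered'' drops straight out of the main equivalence in (2).

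There is no genuine obstacle here: the substantive content was already packaged in Theorem~\ref{thm: weakly scattered} and in the identification of $(Y_{\Op S}, \tau)$ with the soberification. The only subtle point to be careful about is ensuring that Theorem~\ref{thm: weakly scattered} can be applied, i.e.\ that $\Op S$ is spatial, and that in part (2) one really does have $S$ homeomorphic (not merely continuously related) to its soberification whenever $S$ is sober $T_0$, so that weak scatteredness transfers between them in both directions.
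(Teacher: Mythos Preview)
Your proposal is correct and matches the paper's own proof essentially step for step: part (1) is obtained by combining Proposition~\ref{prop: sober} with Theorem~\ref{thm: weakly scattered}, and part (2) follows from part (1) together with Lemma~\ref{lem: weakly scattered implies sober}. Your explicit remark that $\Op S$ is spatial (so that Theorem~\ref{thm: weakly scattered} applies) is a point the paper leaves implicit, but otherwise the arguments coincide.
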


\begin{proof}
(1). By Proposition~\ref{prop: sober}, view $(Y_{\Op S},\tau)$ as the soberification of $S$ and apply Theorem~\ref{thm: weakly scattered}.

(2). Let $S$ be a $T_0$-space.
If $S$ is sober, then $S$ is homeomorphic to $(Y_{\Op S}, \tau)$. Hence, $N(\Op S)$ spatial implies that $S$ is weakly scattered by Theorem~\ref{thm: weakly scattered}. Conversely, if $S$ is weakly scattered, then $S$ is sober by Lemma~\ref{lem: weakly isolated implies maximal}, so $S$ is homeomorphic to $(Y_{\Op S}, \tau)$. Applying Theorem~\ref{thm: weakly scattered} then yields that $N(\Op S)$ is spatial.
\end{proof}

As we pointed in the introduction, the assumption that $S$ is sober cannot be dropped from Theorem~\ref{thm: isbell}(2), as the following example shows. For this we recall the following definition.

\begin{definition}
 \cite[p.~24]{Sim80} The \emph{front topology} on a topological space $S$ is the topology $\tau_F$ generated by $\{ U\setminus V \mid U,V \in \Op S\}$.
\end{definition}

\begin{example} \label{ex: 7.6}
Let $S$ be the set of natural numbers with the usual order. We put the Alexandroff topology on $S$, so $U$ is open
in $S$ iff $U$ is an upset.
We then have the following picture, where $\boldsymbol{n}$ is the prime filter of $\Op S$ consisting of all open sets containing $n$ and $\infty$ is the prime filter $\mathcal{O}S \setminus \{\varnothing \}$.
\begin{center}
\begin{tikzpicture}[scale=0.7]
\draw [fill] (0,0) circle[radius=.07];
\draw [fill] (0,1) circle[radius=.07];
\draw [fill] (0,2) circle[radius=.07];
\draw [fill] (0,3.2) circle[radius=.02];
\draw [fill] (0,3.5) circle[radius=.02];
\draw [fill] (0,3.8) circle[radius=.02];
\node [right] at (0.1,0) {0};
\node [right] at (0.1,1) {1};
\node [right] at (0.1,2) {2};
\draw (0,0) -- (0,2.5);
\node [below] at (0.0, -.5) {$S$};

\draw [fill] (3,5) circle[radius=.07];
\draw [fill] (3,4) circle[radius=.07];
\draw [fill] (3,3) circle[radius=.07];
\draw [fill] (3,1.8) circle[radius=.02];
\draw [fill] (3,1.5) circle[radius=.02];
\draw [fill] (3,1.2) circle[radius=.02];
\draw [fill] (3,0) circle[radius=.07];
\node [right] at (3.1,5) {${\uparrow}0$};
\node [right] at (3.1,4) {${\uparrow}1$};
\node [right] at (3.1,3) {${\uparrow}2$};
\node [right] at (3.1,0) {$\varnothing$};
\draw (3,5) -- (3,2.5);
\draw (3,0) -- (3, .5);
\node [below] at (3.0,-.5) {$\Op S$};

\draw [fill] (6,0) circle[radius=.07];
\draw [fill] (6,1) circle[radius=.07];
\draw [fill] (6,2) circle[radius=.07];
\draw [fill] (6,3.2) circle[radius=.02];
\draw [fill] (6,3.5) circle[radius=.02];
\draw [fill] (6,3.8) circle[radius=.02];
\draw [fill] (6,5) circle[radius=.07];
\node [right] at (6.1,0) {\textbf{0}};
\node [right] at (6.1,1) {\textbf{1}};
\node [right] at (6.1,2) {\textbf{2}};
\node [right] at (6.1,5) {$\infty$};
\draw (6, 0) -- (6, 2.5);
\draw (6,4.5) -- (6,5);
\node [below] at (6.0,-.5) {$X_{\Op S}$};
\end{tikzpicture}
\end{center}

The space
$(X_{\Op S}, \pi)$ is the one-point compactification of $(S, \tau_F)$, which is a discrete space, and the order on $X_{\Op S}$ is described in the picture. Since the downset of each $x\in X_{\Op S}$ is clopen, we see that $Y_{\Op S}=X_{\Op S}$ and $N(X_{\Op S})=\F_\pi(Y_{\Op S})$,
yielding that $N(\Op S)$ is spatial. On the other hand,
$S$ is not weakly scattered since there are no weakly isolated points in $S$. Indeed, if there was a weakly isolated
point $s\in S$, then there would exist an open set $U$ with $s \in U \cap S = U \subseteq
{\downarrow}s$. But ${\downarrow}s$ is finite for each $s \in S$, while nonempty open subsets of $S$ are infinite. This shows that
$s$ is not weakly isolated in $S$, and hence $S$ is not weakly scattered.
\end{example}

We next turn to Simmons' results. One of Simmons' main tools was to use the front topology on a topological space $S$. We denote the frame of open sets of the front topology by $\Op_F(S)$.
Since Simmons did not assume that $S$ is $T_0$, we recall
that the \emph{$T_0$-reflection} of $S$ is defined as the quotient space $S_0$ by the equivalence relation
$\sim$
given by $x \sim y$ iff $\overline{\{x\}}=\overline{\{y\}}$.
Clearly
$S_0$ is a $T_0$-space,
and the canonical map $\rho : S \to S_0$ is both an open and a closed map since each open set and hence each closed set of $S$ is saturated
with respect to $\sim$. We denote the equivalence class of $x\in S$ by $[x]$.
In the next lemma we show how the front topology is connected to the space $(X_{\Op S},\pi)$.

\begin{lemma} \label{lem: e}
The map $\varepsilon : S \to X_{\Op S}$ factors through the natural map $\rho : S \to S_0$. If $\varepsilon' : S_0 \to X_{\Op S}$ is
the induced map, then the pair $((X_{\Op S},\pi),\varepsilon')$ is a compactification of $(S_0,\tau_F)$. In particular, if $S$ is $T_0$, then $((X_{\Op S},\pi),\varepsilon)$ is a compactification of $(S,\tau_F)$.
\[
\begin{tikzcd}
S \arrow[rr, "\varepsilon"] \arrow[dr, "\rho"'] && X_{\Op S} \\
& S_0 \arrow[ru, "\varepsilon'"'] &
\end{tikzcd}
\]
\end{lemma}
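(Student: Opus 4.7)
The plan is to verify the three ingredients of a compactification in turn: that $\varepsilon$ descends to a well-defined map $\varepsilon'$ on the $T_0$-reflection, that $\varepsilon'$ is a topological embedding of $(S_0,\tau_F)$ into $(X_{\Op S},\pi)$, and that its image is dense. Compactness of $(X_{\Op S},\pi)$ is immediate from Priestley duality, so no work is needed there.

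First I would show the factorization. If $s\sim t$, then $s$ and $t$ lie in exactly the same closed sets, hence in exactly the same open sets, so $\varepsilon(s)=\varepsilon(t)$ as completely prime filters; thus $\varepsilon$ descends to a map $\varepsilon':S_0\to X_{\Op S}$. The same equivalence shows $\varepsilon'$ is injective: if $\varepsilon(s)=\varepsilon(t)$, then $s$ and $t$ belong to the same opens, so $\overline{\{s\}}=\overline{\{t\}}$, i.e.\ $s\sim t$.

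Next I would establish the embedding property by a single preimage/image computation on the canonical basis. Recall that $\pi$ has basis $\{\varphi(U)\setminus\varphi(V)\mid U,V\in\Op S\}$, and $\tau_F$ on $S$ has basis $\{U\setminus V\mid U,V\in\Op S\}$; since $\rho$ is open and closed and opens of $S$ are saturated under $\sim$, the front topology on $S_0$ has the corresponding basis $\{\rho(U)\setminus\rho(V)\mid U,V\in\Op S\}=\{\rho(U\setminus V)\}$. Unwinding the definition gives
\[
(\varepsilon')^{-1}\bigl(\varphi(U)\setminus\varphi(V)\bigr)=\{[s]\in S_0\mid s\in U,\ s\notin V\}=\rho(U\setminus V),
\]
which proves continuity, and the same identity read in the other direction gives
\[
\varepsilon'\bigl(\rho(U\setminus V)\bigr)=\bigl(\varphi(U)\setminus\varphi(V)\bigr)\cap\varepsilon'(S_0),
\]
which shows $\varepsilon'$ is open onto its image. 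Combined with injectivity, $\varepsilon'$ is a topological embedding.

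For density, I would observe that a basic open $\varphi(U)\setminus\varphi(V)$ in $(X_{\Op S},\pi)$ is nonempty iff $U\not\subseteq V$, in which case any $s\in U\setminus V$ yields $\varepsilon'([s])\in\varphi(U)\setminus\varphi(V)$, so $\varepsilon'(S_0)$ meets every nonempty basic open. Together with compactness of $X_{\Op S}$, this completes the verification that $((X_{\Op S},\pi),\varepsilon')$ is a compactification of $(S_0,\tau_F)$. The $T_0$ special case is immediate: then $\sim$ is equality, $\rho$ is a homeomorphism, and $\varepsilon'$ may be identified with $\varepsilon$. The main obstacle, such as it is, is purely bookkeeping, namely keeping straight the various topologies ($\pi$, $\tau_F$ on $S$, $\tau_F$ on $S_0$) and using saturation under $\sim$ to pass basic opens between $S$ and $S_0$; once this is laid out, everything reduces to the single basis identity above.
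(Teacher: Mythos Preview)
Your proposal is correct and follows essentially the same approach as the paper: both arguments reduce everything to the single basis computation $(\varepsilon')^{-1}(\varphi(U)\setminus\varphi(V))=\rho(U\setminus V)$ and its image version, then check density via $\varphi(U)\setminus\varphi(V)\ne\varnothing\Rightarrow U\setminus V\ne\varnothing$. The only cosmetic difference is that the paper obtains the factorization by invoking the universal property of the $T_0$-reflection (noting that $\varepsilon$ is continuous into a $T_0$ target), whereas you verify $s\sim t\Rightarrow\varepsilon(s)=\varepsilon(t)$ directly; both are immediate.
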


\begin{proof}
Since $\varepsilon$ is continuous with respect to the open upset topology on $X_{\Op S}$, which is a $T_0$ topology,
$\varepsilon'$ factors through $\rho$.

A basic open set of $(X_{\Op S},\pi)$ has the form $\varphi(U) \setminus \varphi(V)$ for some $U,V$ open in $S$. Therefore,
$\varepsilon^{-1}(\varphi(U) \setminus \varphi(V)) = U\setminus V$. Thus, $\varepsilon$ is continuous with respect to the
front topology on $S$. Because
$(\varepsilon')^{-1}(\varphi(U) \setminus \varphi(V)) = \rho(\varepsilon^{-1}(\varphi(U)\setminus\varphi(V))$ and $\rho$ is
an open map, we see that $\varepsilon'$ is continuous with respect to the front topology on $S_0$. It is a homeomorphism onto
its image since for
$U,V \in \Op S$, we have $\varepsilon'(\rho(U)\setminus \rho(V)) = (\varphi(U) \setminus \varphi(V)) \cap \varepsilon[S]$.
To see that the image is dense, let $\varphi(U) \setminus \varphi(V)$ be nonempty.
Then $U \setminus V \ne \varnothing$, which shows that
$(\varphi(U) \setminus \varphi(V)) \cap \varepsilon[S] \ne \varnothing$. Therefore, $\varepsilon'[S_0] = \varepsilon[S]$
is dense in $(X_{\Op S},\pi)$.
Thus, $\varepsilon' : S_0 \to X_{\Op S}$ is a compactification with respect to the front topology on $S_0$.
\end{proof}

Let $A$ be a subset of $S$. Following \cite[Def.~1.2]{Sim80}, we call $x \in A$ \emph{detached} in $A$ if there is an open set $U$ of $S$
with $x \in U \cap A \subseteq [x]$. The space $S$ is \emph{dispersed} if each nonempty closed subset of $S$ has a detached point
(see \cite[Def.~1.8, Thm.~1.9]{Sim80}).

\begin{proposition} \label{prop: dispersed}
Let $S$ be a topological space and let $S_0$ be its $T_0$-reflection.
\begin{enumerate}
\item $S$ is weakly scattered iff $S_0$ is weakly scattered.
\item $S$ is dispersed iff $S_0$ is scattered.
\end{enumerate}
\end{proposition}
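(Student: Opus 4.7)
The plan is to exploit that the quotient map $\rho : S \to S_0$ is surjective, open and closed, and that every open (hence every closed) subset of $S$ is $\sim$-saturated; this gives $\rho^{-1}(\rho(A)) = A$ for any open or closed $A$, and a bijection between closed subsets of $S$ and closed subsets of $S_0$ via $F \mapsto \rho(F)$ and $G \mapsto \rho^{-1}(G)$. Two auxiliary identities will do most of the work: first, $\overline{\{[y]\}} = \rho(\overline{\{y\}})$, which holds because $\rho$ is closed and continuous, so that $[z] \in \overline{\{[y]\}}$ iff $z \in \overline{\{y\}}$; second, $[z] = [y]$ iff $z \in [y]$, which is tautological from the definition of the equivalence classes.

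For (1), I would argue both directions by transferring the witnessing data across $\rho$. For the forward direction, given a nonempty closed $G \subseteq S_0$, set $F = \rho^{-1}(G)$, which is closed and nonempty in $S$; apply weak scatteredness of $S$ to pick a weakly isolated point $y \in F$ with an open $U \subseteq S$ satisfying $y \in U \cap F \subseteq \overline{\{y\}}$; then $V = \rho(U)$ is open in $S_0$ and $[y] \in V \cap \rho(F)$, and for any $[z] \in V \cap \rho(F)$ we get $z \in U \cap F \subseteq \overline{\{y\}}$ (using $\rho^{-1}(V) = U$ and saturation of $F$), hence $[z] \in \overline{\{[y]\}}$. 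The reverse direction proceeds symmetrically: given nonempty closed $F \subseteq S$, apply weak scatteredness of $S_0$ to $\rho(F)$ to obtain $[y]$ and open $V$ with $[y] \in V \cap \rho(F) \subseteq \overline{\{[y]\}}$, set $U = \rho^{-1}(V)$, and pick a representative $y \in [y] \subseteq F$ (possible since $F$ is saturated); then $y \in U \cap F$, and for any $z \in U \cap F$ one has $[z] \in V \cap \rho(F) \subseteq \overline{\{[y]\}}$, giving $z \in \overline{\{y\}}$.

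For (2), the argument is parallel, with ``weakly isolated in $A$'' replaced by ``detached in $A$'' on the $S$ side and by ``isolated in $G$'' on the $S_0$ side, using the tautology $[z] \in [y]$ iff $z \in [y]$ in place of the closure identity. Explicitly, a detached $y \in F$ with open $U$ satisfying $U \cap F \subseteq [y]$ produces $V = \rho(U)$ with $V \cap \rho(F) = \{[y]\}$, since any $[z] \in V \cap \rho(F)$ forces $z \in U \cap F \subseteq [y]$; and conversely, an isolated $[y]$ in $\rho(F)$ with $V \cap \rho(F) = \{[y]\}$ pulls back via $U = \rho^{-1}(V)$ to show that any representative $y \in F \cap [y]$ is detached in $F$, since $z \in U \cap F$ forces $[z] \in V \cap \rho(F) = \{[y]\}$, i.e., $z \in [y]$.

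I do not expect a serious obstacle; the only delicate points are the two auxiliary identities above, which are standard consequences of $\rho$ being open, closed, and the $T_0$-reflection, together with keeping the quantifiers on representatives $y \in [y]$ straight when pulling back from $S_0$ to $S$.
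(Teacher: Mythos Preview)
Your proposal is correct and follows essentially the same approach as the paper: both proofs transfer closed sets across $\rho$ using that $\rho$ is open, closed, and surjective with saturated fibers, invoke the identities $\rho(\overline{\{y\}}) = \overline{\{\rho(y)\}}$ and $\rho^{-1}(\overline{\{\rho(y)\}}) = \overline{\{y\}}$ for part~(1), and the analogous identities for equivalence classes in part~(2). Your write-up is slightly more explicit about the saturation bookkeeping (e.g., $\rho^{-1}(\rho(U)) = U$) than the paper's, but the argument is the same.
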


\begin{proof}
(1). Suppose that $S$ is weakly scattered. Let $F$ be a nonempty closed subset of $S_0$. Then $\rho^{-1}(F)$ is a nonempty closed subset of $S$,
so there is $x \in S$ and an open set $U$ of $S$ with $x \in U \cap \rho^{-1}(F) \subseteq \overline{\{x\}}$.
Therefore, $\rho(x) \in \rho(U) \cap F \subseteq \rho\left(\overline{\{x\}}\right) = \overline{\{\rho(x)\}}$. Since $\rho(U)$ is open in $S_0$,
this shows $\rho(x)$ is weakly isolated in $F$. Thus, $S_0$ is weakly scattered.

Conversely, suppose that $S_0$ is weakly scattered. Let $F$ be a nonempty closed subset of $S$. Then $\rho(F)$ is closed in $S_0$ and nonempty.
Therefore, there is $x \in S$ and an open set $V$ of $S_0$ with $\rho(x) \in V \cap \rho(F) \subseteq \overline{\{\rho(x)\}}$. Since $F = \rho^{-1}(\rho(F))$, this yields
$x \in \rho^{-1}(V) \cap F \subseteq \rho^{-1}(\overline{\{\rho(x)\}}) = \overline{\{x\}}$. Thus, $x$ is weakly isolated in $F$, and hence
$S$ is weakly scattered.

(2). First suppose that $S$ is dispersed. Let $F$ be a nonempty closed subset of $S_0$. Then $\rho^{-1}(F)$ is a nonempty closed subset of $S$.
Since $S$ is dispersed, there is a detached point $x$ of $\rho^{-1}(F)$. Therefore, there is an open set $U$ of $S$ with
$x \in U \cap \rho^{-1}(F) \subseteq [x]$. Thus, $\rho(x) \in \rho(U) \cap F \subseteq \{\rho(x)\}$, which shows $\rho(x)$ is isolated in $F$.
Consequently, $S_0$ is scattered.

Conversely, suppose that $S_0$ is scattered. Let $F$ be a nonempty closed subset of $S$. Then $\rho(F)$ is a nonempty closed subset of $S_0$.
Since $S_0$ is scattered, $\rho(F)$ has an isolated point $\rho(x)$. But then $x$ is a detached point of $F$. Therefore, $F$ has a detached point,
and hence $S$ is dispersed.
\end{proof}

For a topological space $S$, let $\F_F(S)$ denote the coframe of closed sets of the front topology on $S$.
Define $\delta : N(X_{\Op S}) \to \F_F(S)$ by $\delta(N) = \varepsilon^{-1}(N)$ for each $N \in N(X_{\Op S})$. Then $\delta$ is the
composition of $\gamma$ with the pullback map $\F_\pi(Y_{\Op S}) \to \F_F(S)$. Therefore, the following is a consequence of Lemma~\ref{lem: gamma}.

\begin{lemma} \label{lem: delta}
$\delta : N(X_{\Op S}) \to \F_F(S)$ is an onto coframe homomorphism.
\end{lemma}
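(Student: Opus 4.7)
The plan is to exploit the factorization of $\delta$ already indicated in the paragraph preceding the lemma, reducing the work to analyzing the pullback map $\mu : \F_\pi(Y_{\Op S}) \to \F_F(S)$ given by $\mu(F) = \varepsilon^{-1}(F)$, and then composing with $\gamma$. First I would check $\mu$ is well defined: each $\varepsilon(s)$ is a completely prime filter of $\Op S$, hence by \cite[Lem.~5.1]{BGJ16} a nuclear point, so $\varepsilon$ factors through $Y_{\Op S}$; and since $\varepsilon^{-1}(\varphi(U)\setminus \varphi(V)) = U\setminus V$ (computed inside the proof of Lemma~\ref{lem: e}), basic $\pi$-opens pull back to basic front-opens, so $\varepsilon : (S,\tau_F) \to (Y_{\Op S},\pi)$ is continuous and $\mu(F)$ is indeed closed in $\tau_F$.

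Next I would observe that, as an inverse-image map, $\mu$ preserves arbitrary intersections and arbitrary unions, in particular finite unions, so $\mu$ is a coframe homomorphism. Combined with Lemma~\ref{lem: gamma}, which says $\gamma$ is an onto coframe homomorphism, it then suffices to prove $\mu$ is onto; the conclusion that $\delta = \mu \circ \gamma$ is an onto coframe homomorphism follows by composition.

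For surjectivity of $\mu$, given $C \in \F_F(S)$ set
\[
F := \cl_\pi(\varepsilon[C]) \cap Y_{\Op S},
\]
which is closed in $(Y_{\Op S},\pi)$. Clearly $C \subseteq \varepsilon^{-1}(F)$. For the reverse inclusion, take $s \notin C$ and choose a basic front-open $U\setminus V$ with $s \in U\setminus V$ and $(U\setminus V)\cap C = \varnothing$. Then $\varphi(U)\setminus\varphi(V)$ is a $\pi$-open neighborhood of $\varepsilon(s)$ that misses $\varepsilon[C]$, since $\varepsilon(c) \in \varphi(U)\setminus \varphi(V)$ would force $c \in U\setminus V$, contradicting $C \cap (U\setminus V) = \varnothing$. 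Hence $\varepsilon(s) \notin \cl_\pi(\varepsilon[C])$, so $s \notin \mu(F)$, giving $\mu(F) = C$. The only mild obstacle is the surjectivity step, whose crux is the dual observation that the subbasic front-opens are exactly the $\varepsilon$-pullbacks of subbasic $\pi$-opens on $X_{\Op S}$; once this is in hand, the closure computation is routine.
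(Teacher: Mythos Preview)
Your proposal is correct and follows exactly the route the paper indicates: the paper's entire argument is the sentence preceding the lemma, asserting that $\delta$ is the composition of $\gamma$ with the pullback map $\F_\pi(Y_{\Op S}) \to \F_F(S)$ and hence inherits the conclusion from Lemma~\ref{lem: gamma}. You have simply supplied the details the paper leaves implicit --- that the pullback $\mu$ is well defined, is a coframe homomorphism, and is onto --- so there is nothing to compare beyond noting that your write-up is a faithful unpacking of the paper's one-line justification.
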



\begin{theorem} \label{prop: delta}
For a topological space $S$, the following are equivalent.
\begin{enumerate}
\item $S$ is weakly scattered.
\item If $N\in N(X_{\Op S})$ is nonempty, then so is $\delta(N)$.
\item $\delta$ is 1-1.
\item $\delta$ is an isomorphism.
\end{enumerate}
\end{theorem}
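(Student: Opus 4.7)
The plan is to mirror the proof of Theorem~\ref{thm: spatial}, using $\delta$ in place of $\gamma$. The key observation is that since $\varepsilon[S]\subseteq Y_{\Op S}$, we have $\delta(N)=\varepsilon^{-1}(N)=\varepsilon^{-1}(N\cap Y_{\Op S})=\varepsilon^{-1}(\gamma(N))$. Together with Proposition~\ref{prop: dispersed}(1), this factorization lets us move between the hypothesis on $S$ and the analogous condition on the soberification $(Y_{\Op S},\tau)\cong S_0$, provided we know $S_0$ is sober. I will prove the cycle (1)$\Rightarrow$(2)$\Rightarrow$(3)$\Rightarrow$(4)$\Rightarrow$(1).

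For (1)$\Rightarrow$(2), I begin from $S$ weakly scattered, invoke Proposition~\ref{prop: dispersed}(1) to obtain $S_0$ weakly scattered, and then use Lemma~\ref{lem: weakly scattered implies sober} to conclude $S_0$ is sober. Because $\Op S\cong\Op S_0$, the soberification of $S$ coincides with that of $S_0$, and when $S_0$ is sober this is just $S_0$ itself; hence $\varepsilon':S_0\to(Y_{\Op S},\tau)$ is a homeomorphism, and $\varepsilon=\varepsilon'\circ\rho$ is a surjection onto $Y_{\Op S}$. Consequently $(Y_{\Op S},\tau)$ is weakly scattered, and since $\Op S$ is spatial, Theorem~\ref{thm: weakly scattered} gives that $N(\Op S)$ is spatial. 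Theorem~\ref{thm: spatial} now yields $N\cap Y_{\Op S}\ne\varnothing$ for each nonempty $N\in N(X_{\Op S})$, and the surjectivity of $\varepsilon$ onto $Y_{\Op S}$ delivers $\delta(N)=\varepsilon^{-1}(N)\ne\varnothing$.

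The step (2)$\Rightarrow$(3) is routine: if $N\not\subseteq M$ in $N(X_{\Op S})$, pick a clopen $U$ of $X_{\Op S}$ with $M\subseteq U$ but $N\not\subseteq U$; then $N\setminus U$ is nonempty and nuclear by Lemma~\ref{lem: basic nuclear facts}(3), so (2) produces $x\in S$ with $\varepsilon(x)\in N\setminus U\subseteq N\setminus M$, separating $\delta(N)$ from $\delta(M)$. Then (3)$\Rightarrow$(4) is immediate from Lemma~\ref{lem: delta}, which already gives that $\delta$ is an onto coframe homomorphism.

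For (4)$\Rightarrow$(1), note first that (4) implies (2). Applying (2) to the singletons $\{y\}\in N(X_{\Op S})$ with $y\in Y_{\Op S}$ forces $\varepsilon':S_0\to Y_{\Op S}$ to be surjective; since $S_0$ is $T_0$, every irreducible closed subset of $S_0$ is then the closure of a unique point, so $S_0$ is sober and $(Y_{\Op S},\tau)\cong S_0$. The same (2) makes $N\cap Y_{\Op S}\ne\varnothing$ for every nonempty nuclear $N$, so $N(\Op S)$ is spatial by Theorem~\ref{thm: spatial}; Theorem~\ref{thm: weakly scattered} then says $(Y_{\Op S},\tau)\cong S_0$ is weakly scattered, and Proposition~\ref{prop: dispersed}(1) transfers this back to $S$. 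The main obstacle in both the forward and backward reductions is the identification of $(Y_{\Op S},\tau)$ with $S_0$; in either direction this rests on Lemma~\ref{lem: weakly scattered implies sober} and the isomorphism $\Op S\cong\Op S_0$, after which Theorems~\ref{thm: spatial} and~\ref{thm: weakly scattered} do the remaining work.
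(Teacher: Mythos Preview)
Your proof is correct and follows essentially the same approach as the paper: reduce to $S_0$ via Proposition~\ref{prop: dispersed}(1), use Lemma~\ref{lem: weakly scattered implies sober} to identify $S_0$ with $(Y_{\Op S},\tau)$, and then invoke Theorems~\ref{thm: spatial} and~\ref{thm: weakly scattered}. The only cosmetic difference is in (4)$\Rightarrow$(1): the paper first observes that $\delta$ 1-1 forces $\gamma$ 1-1 (since $\delta=\varepsilon^{-1}\circ\gamma$) and then gets spatiality from Theorem~\ref{thm: spatial}(3), whereas you pass through condition~(2) of that theorem; also, in that step soberness of $S_0$ comes from surjectivity of $\varepsilon'$ onto the sober space $Y_{\Op S}$, not from Lemma~\ref{lem: weakly scattered implies sober} as your closing sentence suggests.
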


\begin{proof}
(1) $\Rightarrow$ (2). Suppose $S$ is weakly scattered. Then $S_0$ is weakly scattered by Proposition~\ref{prop: dispersed}(1). Since $S_0$ is
$T_0$, we see that $S_0$ is sober by Lemma~\ref{lem: weakly scattered implies sober}. Thus, $\varepsilon : S_0 \to Y_{\Op S}$ is a homeomorphism. Therefore,
$Y_{\Op S}$ is weakly scattered, so $N(\Op S)$ is spatial by Theorem~\ref{thm: weakly scattered}. Let $\varnothing \ne N \in N(X_{\Op S})$.
Then $\gamma(N) \ne \varnothing$ by Theorem~\ref{thm: spatial}. Because $\varepsilon$ is a homeomorphism, the pullback map
$\varepsilon^{-1} : \F_\pi(Y_{\Op S}) \to \F_F(S)$ is an isomorphism. Thus, $\delta(N) = \varepsilon^{-1}(\gamma(N)) \ne \varnothing$.

(2) $\Rightarrow$ (3). Suppose $N, M\in N(X_{\Op S})$ with $N \not\subseteq M$. Since $M$ is closed, there
is a clopen set $U$ with $M \subseteq U$ and $N \not\subseteq U$. Then $N \setminus U$ is a nonempty nuclear set, so
$\delta(N \setminus U) \ne \varnothing$ by (2). Therefore, there is $s \in S$ with $\varepsilon(s) \in N \setminus U$. Thus,
$\varepsilon(s) \in N$ but $\varepsilon(s) \notin M$. Consequently, $\delta(N) = \varepsilon^{-1}(N) \not\subseteq \varepsilon^{-1}(M) = \delta(M)$,
and so $\delta$ is 1-1.

(3) $\Rightarrow$ (4). This is clear since $\delta$ is an onto coframe homomorphism by Lemma~\ref{lem: delta}.

(4) $\Rightarrow$ (1). Suppose that $\delta$ is an isomorphism. Then $\gamma$ is 1-1, so $N(\Op S)$ is spatial by Theorem~\ref{thm: spatial}.
Therefore, $Y_{\Op S}$ is weakly scattered by Theorem~\ref{thm: weakly scattered}. If $y \in Y_{\Op S}$, then $\{y\} \in N(X_{\Op S})$, so
$\delta(\{y\}) \ne \varnothing$ since $\delta$ is an isomorphism.
This implies $y \in \varepsilon[S]$. Thus, $\varepsilon$ is onto, and so $Y_{\Op S}$ is homeomorphic to $S_0$.
Consequently, $S_0$ is weakly scattered, and hence $S$ is weakly scattered by Proposition~\ref{prop: dispersed}(1).
\end{proof}

We can now recover one of the main results of \cite{Sim80}. For this we recall \cite[Def.~3.2]{Sim80} that $\sigma : N(\Op S) \to \Op_F(S)$
is defined by $\sigma(j)=\bigcup\{j(U)\setminus U\mid U\in \Op S\}$ for each $j\in N(\Op S)$. By \cite[Thm.~3.6]{Sim80}, $\sigma$
is an onto frame homomorphism.

\begin{corollary} \cite[Thm.~4.4]{Sim80}
A topological space $S$ is weakly scattered iff $\sigma : N(\Op S) \to \Op_F(S)$ is 1-1 (and hence an isomorphism).
\end{corollary}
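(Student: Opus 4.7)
The plan is to reduce this to Theorem~\ref{prop: delta} by identifying $\sigma$ with $\delta$ under the dualities available to us. On the domain side we have the dual isomorphism $N(\Op S) \to N(X_{\Op S})$ sending $j \mapsto N_j$, and on the codomain side the isomorphism $\Op_F(S) \to \F_F(S)$ sending $U \mapsto S \setminus U$. I will show that under these identifications $\sigma$ corresponds to $\delta$; more precisely, I will establish the identity
\[
S \setminus \sigma(j) = \varepsilon^{-1}(N_j) = \delta(N_j) \quad \text{for every } j \in N(\Op S).
\]

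The verification of this identity is where the real work lies, but it is just definition chasing. Fix $j \in N(\Op S)$ and $s \in S$. By definition, $\varepsilon(s) = \{U \in \Op S \mid s \in U\}$ and $j^{-1}(\varepsilon(s)) = \{V \in \Op S \mid s \in j(V)\}$. Since $V \le j(V)$ always holds, we have $\varepsilon(s) \subseteq j^{-1}(\varepsilon(s))$. Therefore $\varepsilon(s) \in N_j$, i.e.\ $j^{-1}(\varepsilon(s)) = \varepsilon(s)$, fails precisely when there is some $V \in \Op S$ with $s \in j(V)$ and $s \notin V$, which is exactly the condition $s \in j(V) \setminus V \subseteq \sigma(j)$. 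Hence $s \in \sigma(j) \iff \varepsilon(s) \notin N_j \iff s \notin \delta(N_j)$, giving the claimed identity.

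With the identity in hand, the theorem falls out quickly. Because the map $j \mapsto N_j$ is a dual isomorphism of $N(\Op S)$ with $N(X_{\Op S})$, and $U \mapsto S \setminus U$ is a bijection between $\Op_F(S)$ and $\F_F(S)$, the map $\sigma$ is one-to-one iff $\delta$ is one-to-one. By Theorem~\ref{prop: delta}, this holds iff $S$ is weakly scattered. Finally, by \cite[Thm.~3.6]{Sim80}, $\sigma$ is always onto, so being 1-1 is equivalent to being an isomorphism, yielding the ``and hence an isomorphism'' clause. The main obstacle, as noted, is the identity $S \setminus \sigma(j) = \delta(N_j)$, but it reduces to the elementary observation that the failure of $j^{-1}(\varepsilon(s)) = \varepsilon(s)$ is witnessed precisely by an open $V$ such that $s \in j(V) \setminus V$.
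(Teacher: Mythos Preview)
Your proof is correct and follows the same overall strategy as the paper: establish the identity $\sigma(j) = S \setminus \delta(N_j)$ and then invoke Theorem~\ref{prop: delta}. The difference lies in how the identity is verified. The paper works on the Esakia side, using the formula $\varphi(j(U)) = X_{\Op S} \setminus {\downarrow}(N_j \setminus \varphi(U))$ and a set-theoretic chase in $X_{\Op S}$ to obtain each inclusion separately. Your argument instead unwinds the defining condition $N_j = \{x \mid j^{-1}(x) = x\}$ directly at the point $\varepsilon(s)$, observing that the inclusion $\varepsilon(s) \subseteq j^{-1}(\varepsilon(s))$ is automatic from $V \le j(V)$, so that membership in $N_j$ fails exactly when some $V$ witnesses $s \in j(V) \setminus V$. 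This is more elementary and bypasses the Esakia machinery for this step; the paper's route, by contrast, keeps everything phrased in the dual picture, which is consistent with the rest of the exposition but heavier here.
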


\begin{proof}
By Theorem~\ref{prop: delta} it suffices to prove that $\sigma$ is 1-1 iff $\delta$ is 1-1.
Let $j \in N(\Op S)$ and let
$N_j$ be the corresponding nuclear set. We show $\sigma(j) = S \setminus \delta(N_j)$. Let $U \in \Op S$. We have
\[
N_j \subseteq (N_j \setminus \varphi(U)) \cup \varphi(U) \subseteq \down(N_j \setminus \varphi(U)) \cup \varphi(U),
\]
so
\[
[X_{\Op S} \setminus \down(N_j \setminus \varphi(U)] \setminus \varphi(U) \subseteq X_{\Op S} \setminus N_j.
\]
Recalling that $j$ satisfies
\[
\varphi(j(U)) = X_{\Op S} \setminus \down(N_j \setminus \varphi(U)),
\]
the last inclusion implies that $\varphi(j(U)) \setminus \varphi(U) \subseteq X_{\Op S} \setminus N_j$, so
$\delta(\varphi(j(U)) \setminus \varphi(U)) \subseteq \delta(X_{\Op S}\setminus N_j)$.
Therefore, $j(U) \setminus U \subseteq S \setminus \delta(N_j)$. Thus, $\sigma(j) \subseteq S \setminus \delta(N_j)$.

For the reverse inclusion,  let $s \in S \setminus \delta(N_j)$. Then $\varepsilon(s) \notin N_j$. Therefore, there are $U,V \in \Op S$ with
$\varepsilon(s) \in \varphi(U) \setminus \varphi(V)$ and $N_j \cap (\varphi(U) \setminus \varphi(V)) = \varnothing$. Thus,
$\varepsilon(s) \notin \varphi(V)$ and $(N_j \setminus \varphi(V)) \cap \varphi(U) = \varnothing$. Since $\varphi(U)$ is an upset,
$\varphi(U) \cap \down (N \setminus \varphi(V)) = \varnothing$, which implies $\varepsilon(s) \notin \down (N_j \setminus \varphi(V))$
because $\varepsilon(s) \in \varphi(U)$. This yields $\varepsilon(s) \in \varphi(j(V)) \setminus \varphi(V)$, so $s \in j(V) \setminus V$.
Consequently, $S \setminus \delta(N_j) \subseteq \sigma(j)$, and hence $\sigma(j) = S \setminus N_j$. From this
it follows that $\sigma$ is 1-1 iff $\delta$ is 1-1, completing the proof.
\end{proof}

We conclude the paper by determining when $N(\Op S)$ is boolean, and recovering another main result of Simmons \cite[Thm.~4.5]{Sim80}.

\begin{theorem} \label{thm: scat}
For a spatial frame $L$, the following conditions are equivalent.
\begin{enumerate}
\item $N(L)$ is boolean.
\item $N(L)$ is a complete and atomic boolean algebra.
\item $(Y_L,\tau)$ is scattered.
\end{enumerate}
\end{theorem}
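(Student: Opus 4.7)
The plan is to prove the two equivalences $(1)\Leftrightarrow(2)$ and $(2)\Leftrightarrow(3)$, which amounts to three substantive arguments together with the trivial $(2)\Rightarrow(1)$. The main tools are: for a complete boolean algebra, being spatial is equivalent to being atomic; Theorem~\ref{thm: boolean} translates (1) into the condition that $\max(D)$ is clopen for every clopen downset $D$ of $X_L$; Theorem~\ref{thm: spatial} equates spatiality of $N(L)$ with $N\cap Y_L\ne\varnothing$ for every nonempty $N\in N(X_L)$ and with $N(L)\cong\Op_\pi(Y_L)$; Theorem~\ref{thm: weakly scattered} ties weak scatteredness of $(Y_L,\tau)$ to spatiality of $N(L)$; and throughout, density of $Y_L$ in $(X_L,\pi)$ (Theorem~\ref{prop: dense = spatial}) bridges clopens of $X_L$ with their traces on $Y_L$.

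For $(1)\Rightarrow(2)$, it suffices to show $N(L)$ is spatial, hence by Theorem~\ref{thm: spatial} to show $N\cap Y_L\ne\varnothing$ for every nonempty $N\in N(X_L)$. Given such $N$, Theorem~\ref{thm: boolean} makes $\max(N)=\max(\down N)$ clopen, and Lemma~\ref{lem: basic facts}(4) makes it nonempty; the dense set $Y_L$ then meets this nonempty clopen.

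For $(2)\Rightarrow(3)$, assumption (2) supplies both boolean and spatial, yielding the clopen-$\max$ condition via Theorem~\ref{thm: boolean} and weak scatteredness of $(Y_L,\tau)$ via Theorem~\ref{thm: weakly scattered}, so only $T_D$ remains. Fix $y\in Y_L$. Both $\down y$ and $\{y\}=\max(\down y)$ are clopen, so $\down y\setminus\{y\}$ is clopen, and it is a downset because $y$ is maximal in $\down y$. Setting $M=\max(\down y\setminus\{y\})$, clopen by (1), Lemma~\ref{lem: basic facts}(4) gives $\down M=\down y\setminus\{y\}$. Then $V:=X_L\setminus\down M$ is a clopen upset with $V\cap\down y=\{y\}$, so $V\cap Y_L$ is a $\tau$-open neighborhood isolating $y$ in $\cl_\tau(\{y\})=\down y\cap Y_L$.

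For $(3)\Rightarrow(2)$, scatteredness gives weak scatteredness together with $T_D$. The former yields $N(L)$ spatial, hence $N(L)\cong\Op_\pi(Y_L)$ by Theorem~\ref{thm: spatial}(5). The latter provides, for each $y\in Y_L$, a clopen upset $U$ with $U\cap\down y\cap Y_L=\{y\}$; because $U\cap\down y$ is clopen in $X_L$ and $Y_L$ is dense, we obtain $U\cap\down y=\{y\}$, so $\{y\}$ is clopen in $X_L$. Hence $(Y_L,\pi|_{Y_L})$ is discrete, $\Op_\pi(Y_L)=\mathcal{P}(Y_L)$ is a complete atomic boolean algebra, and so is $N(L)$. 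The main obstacle is setting up the $T_D$ separator: the clopen upset $V=X_L\setminus\down M$ in the forward direction and the pullback of $U\cap\down y$ to $Y_L$ in the reverse are mirror images that each require the density bridge to cross between the $\tau$-world of $Y_L$ and the $\pi$-world of $X_L$.
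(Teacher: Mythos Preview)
Your proof is correct, but it is organized differently from the paper's. The paper proves the cycle $(1)\Rightarrow(2)\Rightarrow(3)\Rightarrow(1)$ and stays at the level of $N(X_L)$ throughout: for $(2)\Rightarrow(3)$ it directly exhibits an isolated point in each nonempty $\tau$-closed set (using that $\{y\}$ is regular closed, hence $y$ is isolated in $X_L$), and for $(3)\Rightarrow(1)$ it shows $N(X_L)=\RC(X_L)$ by proving every nuclear set satisfies $N=\cl_\pi(N\cap Y_L)\subseteq\cl_\pi(\Int_\pi N)$. You instead factor through the well-known equivalence ``scattered $\Leftrightarrow$ weakly scattered $+$ $T_D$'': you obtain weak scatteredness for free from Theorem~\ref{thm: weakly scattered}, reduce to checking $T_D$, and in the reverse direction use Theorem~\ref{thm: spatial}(5) to identify $N(L)$ with $\Op_\pi(Y_L)$ and then argue that $(Y_L,\pi)$ is discrete. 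Both arguments pass through the same key observation---that each $y\in Y_L$ is isolated in $(X_L,\pi)$, proved via density of $Y_L$ and $U\cap\down y\cap Y_L=\{y\}$---but yours is more modular (it reuses Theorems~\ref{thm: weakly scattered} and~\ref{thm: spatial}(5) as black boxes), while the paper's is more self-contained at the dual-space level. One small redundancy: in your $(2)\Rightarrow(3)$, once you note $\{y\}=\max(\down y)$ is clopen, the set $\down y\setminus\{y\}$ is already a clopen downset and its complement is the clopen upset you want; the auxiliary set $M$ is not needed.
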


\begin{proof}
(1) $\Rightarrow$ (2). Since $N(L)$ is boolean, $N(X_L) = {\sf RC}(X_L)$ (see Theorem~\ref{thm: boolean}). To see that $N(L)$ is atomic,
let $N \in N(X_L)$ be nonempty. As $N$ is regular closed, $N = \cl_\pi(\Int_\pi(N))$. Because $L$ is spatial, $Y_L$ is dense in $X_L$ by
Theorem~\ref{prop: dense = spatial}. Therefore, $N = \cl_\pi(\Int_\pi(N) \cap Y_L)$. Since $N$ is nonempty, so is $\Int_\pi(N) \cap Y_L$.
Thus, each $y \in \Int_\pi(N) \cap Y_L$ gives rise to the atom $\{y\}$ of $N(X_L)$ underneath $N$. This yields that $N(X_L)$ is an atomic
boolean algebra. Consequently, $N(L)$ is a complete and atomic boolean algebra.

(2) $\Rightarrow$ (3).
Let $F \ne \varnothing$ be a closed subset of $(Y_L,\tau)$. By Lemma~\ref{lem: tau}(2), $F = D \cap Y_L$ for some clopen downset $D$
of $(X_L,\pi)$. Since $N(L)$ is boolean, $\max(D)$ is clopen by Theorem~\ref{thm: boolean}. As $\max(D)$ is nonempty,
$\max(D) \cap Y_L \ne \varnothing$ because $Y_L$ is dense in $X_L$ by Theorem~\ref{prop: dense = spatial}.
Let $y \in \max(D) \cap Y_L$, and set $U=X_L \setminus (D\setminus\{y\})$. Since $y \in Y_L$, the singleton $\{y\}$ is nuclear, hence
regular closed because $N(L)$ is boolean. This implies $y$ is an isolated point of $X_L$, so $U$ is clopen, and it is an upset as
$y\in\max(D)$ and $D$ is a downset. Clearly
$U \cap D = \{y\}$. Thus, $U \cap D \cap Y_L = \{y\}$, and so $y$ is an isolated point of $F = D \cap Y_L$, proving that $Y_L$ is scattered.

(3) $\Rightarrow$ (1).
It is sufficient to show that $N(X_L)=\RC(X_L)$. Let $N \in N(X_L)$. Then $\cl_\pi(N\cap Y_L)$ is nuclear by Lemma~\ref{lem: nuclear}
and is contained in $N$. We have $N \cap Y_L = \cl_\pi(N\cap Y_L) \cap Y_L$. By Theorem~\ref{prop: delta}, $\delta$ is 1-1, so $\gamma$ is 1-1, which yields $N = \cl_\pi(N\cap Y_L)$. We show that each $y \in Y_L$ is isolated in $X_L$. Let $y \in Y_L$. Then $\down y \cap Y_L \in \F_\tau(Y_L)$. Since $(Y_L, \tau)$ is scattered, there is an isolated point $z$ of $\down y \cap Y_L$. By Lemma~\ref{lem: tau}(1), there is a clopen upset $U$ of $X_L$ with $\{z\} = U \cap \down y \cap Y_L$. Since $z \in \down y$ we have $z \le y$. Therefore, $y \in U$, and so $y \in U \cap \down y \cap Y_L$. This implies $z = y$. The set $U \cap \down y$ is clopen in $X_L$, so $(U \cap \down y) \setminus \{y\}$ is open in $X_L$. Since $Y_L$ is dense in $X_L$ and misses $(U \cap \down y) \setminus \{y\}$, it follows that $\{y\} = U \cap \down y$, hence $y$ is an isolated point of $X_L$. From this we conclude that $N \cap Y_L \subseteq \Int_\pi(N)$, so $N = \cl_\pi(\Int_\pi(N))$. Therefore, $N$ is regular closed. Thus, $N(X_L) \subseteq \RC(X_L)$, hence $N(X_L) = \RC(X_L)$ by Lemma~\ref{lem: basic nuclear facts}(2).
\end{proof}

\begin{corollary}
Let $L$ be a spatial frame. Then $L$ is scattered iff $Y_L$ is scattered.
\end{corollary}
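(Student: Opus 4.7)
The plan is to combine the two characterizations immediately preceding the corollary. Theorem~\ref{thm: boolean} provides the equivalence (1) $\Leftrightarrow$ (4), asserting that for any frame $L$, $N(L)$ is boolean if and only if $L$ is scattered. Theorem~\ref{thm: scat}, which requires the hypothesis that $L$ is spatial, gives the equivalence (1) $\Leftrightarrow$ (3), namely that $N(L)$ is boolean if and only if $(Y_L,\tau)$ is scattered. Chaining these two equivalences yields the statement.

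More explicitly, I would argue as follows. Assume $L$ is a spatial frame. Then, on the one hand, $L$ is scattered iff $N(L)$ is boolean by Theorem~\ref{thm: boolean}; and, on the other hand, since $L$ is spatial, $N(L)$ is boolean iff $(Y_L, \tau)$ is scattered by Theorem~\ref{thm: scat}. Composing these biconditionals gives $L$ scattered iff $Y_L$ scattered, which is the desired conclusion.

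There is no real obstacle here, as the result is a direct consequence of the preceding two theorems. The only subtle point to check is that the spatiality hypothesis is used in the correct place: Theorem~\ref{thm: boolean} applies to arbitrary frames, while Theorem~\ref{thm: scat} requires $L$ to be spatial, and the corollary assumes spatiality precisely to invoke the latter. No additional calculation or appeal to $X_L$-level facts is needed beyond what those two theorems already provide.
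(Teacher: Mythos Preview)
Your proposal is correct and matches the paper's own proof, which simply says ``Apply Theorems~\ref{thm: boolean} and~\ref{thm: scat}.'' You have merely made explicit which equivalences from those theorems are being chained, and your observation about where spatiality is used is accurate.
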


\begin{proof}
Apply Theorems~\ref{thm: boolean} and~\ref{thm: scat}.
\end{proof}

As another consequence of Theorems~\ref{thm: boolean} and~\ref{thm: scat}, we obtain Simmons' theorem \cite{Sim80} for $T_0$-spaces.

\begin{corollary} \label{thm: scattered}
For a $T_0$-space $S$, the following are equivalent. 
\begin{enumerate}
\item $N(\Op S)$ is boolean.
\item $S$ is scattered.
\item $\Op S$ is scattered.
\end{enumerate}
\end{corollary}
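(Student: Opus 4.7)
The plan is to apply Theorems~\ref{thm: boolean} and~\ref{thm: scat} to the spatial frame $L = \Op S$. Since $\Op S$ is always a spatial frame, Theorem~\ref{thm: boolean} gives the equivalence (1) $\Leftrightarrow$ (3) immediately. The content of the corollary therefore lies in establishing (1) $\Leftrightarrow$ (2).

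For this, I would combine Theorem~\ref{thm: scat} (which says that for spatial $L$, $N(L)$ is boolean iff $(Y_L,\tau)$ is scattered) with Proposition~\ref{prop: sober} (which identifies $(Y_{\Op S},\tau)$ with the soberification of $S$). The task then reduces to showing that for a $T_0$-space $S$, $S$ is scattered iff its soberification $Y_{\Op S}$ is scattered. For the forward implication, I would observe that every isolated point is weakly isolated, so a scattered space is automatically weakly scattered; then Lemma~\ref{lem: weakly scattered implies sober} applied to the $T_0$-space $S$ yields that $S$ is sober, and hence the canonical map $\varepsilon : S \to Y_{\Op S}$ is a homeomorphism, transporting scatteredness. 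For the backward implication, since $S$ is $T_0$, the map $\varepsilon$ is a topological embedding of $S$ onto a subspace of $Y_{\Op S}$; it then suffices to note that a subspace of a scattered space is scattered, because the subspace topology on any $A \subseteq \varepsilon[S] \subseteq Y_{\Op S}$ is the same whether computed inside $\varepsilon[S]$ or inside $Y_{\Op S}$, so an isolated point in one is an isolated point in the other.

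There is no serious obstacle here; the only thing to be careful about is the ``scatteredness descends to subspaces'' step, which is a direct transitivity-of-subspace-topology argument rather than a deep fact. The other delicate invocation is the use of Lemma~\ref{lem: weakly scattered implies sober}, which requires $T_0$; fortunately this is precisely the hypothesis of the corollary, so both implications go through cleanly and we conclude (1) $\Leftrightarrow$ (2) $\Leftrightarrow$ (3).
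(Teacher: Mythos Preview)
Your proposal is correct and follows essentially the same route as the paper: the equivalence (1) $\Leftrightarrow$ (3) is obtained directly from Theorem~\ref{thm: boolean}, while (1) $\Leftrightarrow$ (2) is derived from Theorem~\ref{thm: scat} together with the facts that $\varepsilon$ is an embedding when $S$ is $T_0$ (so scatteredness of $Y_{\Op S}$ descends to $S$) and that a scattered $T_0$-space is sober via Lemma~\ref{lem: weakly scattered implies sober} (so scatteredness of $S$ transports to $Y_{\Op S}$). The only cosmetic difference is that you factor the argument through the intermediate biconditional ``$S$ scattered iff its soberification is scattered,'' whereas the paper proves (1) $\Rightarrow$ (2) and (2) $\Rightarrow$ (1) separately using the same ingredients.
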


\begin{proof}
(1) $\Rightarrow$ (2). First suppose that $N(\Op S)$ is boolean. By Theorem~\ref{thm: scat}, $(Y_{\Op S},\tau)$ is scattered. Since $S$ is a $T_0$-space,
$\varepsilon:S\to Y_{\Op S}$ is a topological embedding. Thus, $S$ is scattered.

(2) $\Rightarrow$ (1). Conversely, suppose $S$ is scattered. Since it is $T_0$, it is sober by Lemma~\ref{lem: weakly scattered implies sober}. Thus, $S$ is
homeomorphic to $(Y_{\Op S},\tau)$, and hence $(Y_{\Op S},\tau)$ is scattered. Applying Theorem~\ref{thm: scat} then yields that
$N(\Op S)$ is boolean.

(1) $\Leftrightarrow$ (3). Apply Theorem~\ref{thm: boolean}.
\end{proof}

Since $\Op S$ and $\Op (S_0)$ are isomorphic frames, as an immediate consequence of Corollary~\ref{thm: scattered} and
Proposition~\ref{prop: dispersed}(2), we arrive at the general form of Simmons' theorem.

\begin{corollary} \cite[Thm.~4.5]{Sim80}
For an arbitrary topological space $S$, the following are equivalent. 
\begin{enumerate}
\item $N(\Op S)$ is boolean.
\item $S$ is dispersed.
\item $\Op S$ is scattered.
\end{enumerate}
\end{corollary}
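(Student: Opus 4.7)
The plan is to reduce to the $T_0$ case via the $T_0$-reflection $S_0$ of $S$, which is the strategy already signaled in the paragraph preceding the corollary. The two key inputs are Corollary~\ref{thm: scattered} (the $T_0$ version, equating the three properties for $T_0$-spaces) and Proposition~\ref{prop: dispersed}(2) ($S$ is dispersed iff $S_0$ is scattered), together with the standard fact that the canonical quotient $\rho : S \to S_0$ induces a frame isomorphism $\Op S \cong \Op(S_0)$ (since open sets of $S$ are exactly the $\sim$-saturated preimages of open sets of $S_0$).

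First I would record the isomorphism $\Op S \cong \Op(S_0)$, which immediately gives the frame isomorphism $N(\Op S) \cong N(\Op(S_0))$. Hence $N(\Op S)$ is boolean iff $N(\Op(S_0))$ is boolean, and $\Op S$ is scattered iff $\Op(S_0)$ is scattered (scatteredness of a frame is defined purely in terms of its frame structure).

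Next, since $S_0$ is a $T_0$-space, Corollary~\ref{thm: scattered} gives the equivalences
\[
N(\Op(S_0)) \text{ is boolean} \iff S_0 \text{ is scattered} \iff \Op(S_0) \text{ is scattered}.
\]
Combining with the previous step, (1) and (3) are each equivalent to $S_0$ being scattered. Finally, Proposition~\ref{prop: dispersed}(2) identifies $S_0$ being scattered with $S$ being dispersed, giving (2) $\Leftrightarrow$ (1) $\Leftrightarrow$ (3).

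There is no real obstacle here; the work has already been done in Corollary~\ref{thm: scattered} and Proposition~\ref{prop: dispersed}, and the only thing one might wish to spell out more carefully is the isomorphism $\Op S \cong \Op(S_0)$. That isomorphism is standard: $\rho$ is both open and closed because every open (hence every closed) subset of $S$ is $\sim$-saturated, and $U \mapsto \rho(U)$ with inverse $V \mapsto \rho^{-1}(V)$ is the required bijection, which preserves finite meets and arbitrary joins.
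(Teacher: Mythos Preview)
Your proof is correct and follows exactly the approach the paper takes: reduce to the $T_0$-reflection via the frame isomorphism $\Op S \cong \Op(S_0)$, apply Corollary~\ref{thm: scattered} to $S_0$, and invoke Proposition~\ref{prop: dispersed}(2) to identify dispersedness of $S$ with scatteredness of $S_0$. The only difference is that you spell out the isomorphism $\Op S \cong \Op(S_0)$ in more detail than the paper, which simply asserts it.
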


\def\cprime{$'$}
\providecommand{\bysame}{\leavevmode\hbox to3em{\hrulefill}\thinspace}
\providecommand{\MR}{\relax\ifhmode\unskip\space\fi MR }
\providecommand{\MRhref}[2]{%
  \href{http://www.ams.org/mathscinet-getitem?mr=#1}{#2}
}
\providecommand{\href}[2]{#2}

\bigskip

Departament of Mathematics, Universidad Aut\'{o}noma de Ciudad Ju\'{a}rez, fco.avila.mat@gmail.com

Department of Mathematical Sciences, New Mexico State University, Las Cruces NM 88003, guram@nmsu.edu

Department of Mathematical Sciences, New Mexico State University, Las Cruces NM 88003, pmorandi@nmsu.edu

Departament of Mathematics, University of Guadalajara, Blvd.\ Marcelino Garc{\'i}a Barrag\'{a}n, 44430, Guadalajara, Jalisco, M\'{e}xico, angelus31415@gmail.com

\end{document}